\author{Pinaki Mondal}
\title{General Bezout-type theorems}
\newcommand{\Rmnum}[1]{\expandafter\@slowromancap\romannumeral #1@}
\let\oldref\ref
\newcommand{\mathref}[1]{$($\oldref{#1}$)$}
\DeclareMathOperator\Char{char}
\DeclareMathOperator\Div{div} 
\DeclareMathOperator\gr{gr}
\DeclareMathOperator\ord{ord} 
\DeclareMathOperator\proj{Proj}
\DeclareMathOperator\spec{Spec}
\DeclareMathOperator\supp{Supp}
\DeclareMathOperator\vol{Vol}
\newcommand{\scrF}{\ensuremath{\mathcal{F}}}
\newcommand{\scrG}{\ensuremath{\mathcal{G}}}
\newcommand{\scrI}{\ensuremath{\mathcal{I}}}
\newcommand{\scrJ}{\ensuremath{\mathcal{J}}}
\newcommand{\scrM}{\ensuremath{\mathcal{M}}}
\newcommand{\scrP}{\ensuremath{\mathcal{P}}}
\newcommand{\scrQ}{\ensuremath{\mathcal{Q}}}
\newcommand{\psif}{\ensuremath{\psi_\scrF}}
\newcommand{\cc}{\ensuremath{\mathbb{C}}}
\newcommand{\kk}{\ensuremath{\mathbb{K}}}
\newcommand{\nn}{\ensuremath{\mathbb{N}}}
\newcommand{\pp}{\ensuremath{\mathbb{P}}}
\newcommand{\rr}{\ensuremath{\mathbb{R}}}
\newcommand{\zz}{\ensuremath{\mathbb{Z}}}
\newcommand{\affine}[2]{\ensuremath{\mathbb{A}^{#1}(#2)}}
\newcommand{\ank}{\affine{n}{\kk}}
\newcommand{\aaa}{\ensuremath{\mathfrak{a}}}
\newcommand{\ppp}{\ensuremath{\mathfrak{p}}}
\newcommand{\qqq}{\ensuremath{\mathfrak{q}}}
\newcommand{\rrr}{\ensuremath{\mathfrak{r}}}
\newcommand{\sheaf}{\ensuremath{\mathcal{O}}}
\newcommand{\im}{\ensuremath{\Rightarrow}}
\newcommand{\dsum}{\ensuremath{\bigoplus}}
\newcommand{\into}{\ensuremath{\hookrightarrow}}
\newcommand{\onto}{\twoheadrightarrow}
\newcommand{\finv}{\ensuremath{f^{-1}}}
\newtheorem{thm}{Theorem}[section]
\newtheorem*{thm*}{Theorem}
\newtheorem{lemma}[thm]{Lemma}
\newtheorem*{lemma*}{Lemma}
\newtheorem{lemma-in-thm}{Lemma}[thm]
\newtheorem{prop}[thm]{Proposition}
\newtheorem*{prop*}{Proposition}
\newtheorem{cor}[thm]{Corollary}
\newtheorem{claim}{Claim}[thm]
\newtheorem*{claim*}{Claim}
\newtheorem{prolemma}[claim]{Lemma}
\newtheorem*{conjecture*}{Conjecture}
\theoremstyle{definition} 
\newtheorem{example}[thm]{Example}
\newtheorem*{example*}{Example}
\newtheorem*{defn*}{Definition}
\newtheorem*{definotation*}{Definition-Notation}
\newtheorem*{fact*}{Fact}
\newtheorem*{facts*}{Facts}
\newtheorem{rem}[thm]{Remark}
\newtheorem*{reminition*}{Remark-Definition}
\newtheorem*{remtation*}{Remark-Notation}
\newtheorem{bold-question}[thm]{Question}
\newtheorem*{bold-note*}{Note}
\theoremstyle{remark}
\newtheorem*{rem*}{Remark}
\newtheorem*{note*}{Note}
\newtheorem*{notation*}{Notation}
\newtheorem*{question*}{Question}
\newtheorem*{questions*}{Questions}
\theoremstyle{plain}
\newcounter{Cases}
\newcounter{UnorderedProofTempCtr}
\newcommand{\tempcommand}{}
\newcommand{\WP}{\ensuremath{{\bf W}\pp}}
\newcommand{\complete}{\completee\ }
\newcommand{\completee}{complete}
\newcommand{\filtrationchar}{\ensuremath{\mathcal{F}}}
\newcommand{\filtrationring}{\ensuremath{A}}
\newcommand{\profing}{\profingg{\filtrationring}{\filtrationchar}}
\newcommand{\profingg}[2]{\ensuremath{{#1}^{#2}}}
\newcommand{\profinggg}[1]{\profingg{\filtrationring}{#1}}
\newcommand{\gring}{\gringg{\filtrationring}{\filtrationchar}}
\newcommand{\gringg}[2]{\ensuremath{\gr {#1}^{#2}}}
\newcommand{\ld}{\mathfrak{L}}
\begin{document}

\renewcommand{\affine}[2]{\ensuremath{{#2}^{#1}}}
\newcommand{\identity}{\mathds{1}}
\renewcommand{\thefootnote}{\fnsymbol{footnote}} 

\newcommand{\anc}{\affine{n}{\cc}}
\newcommand{\af}{\profingg{A}{\scrF}}
\newcommand{\xf}{X^\scrF}
\newcommand{\kxf}{\kk[X]^\scrF}
\renewcommand{\filtrationring}{\ensuremath{A}}
\renewcommand{\filtrationchar}{\ensuremath{\mathcal{F}}}
\newcommand{\adelta}{\profingg{A}{\delta}}
\newcommand{\xdelta}{X^\delta}
\newcommand{\locali}{\sheaf_{V_{i}, \xdelta}}
\newcommand{\localj}{\sheaf_{V_{j}, \xdelta}}

\maketitle

\begin{abstract}
In this sequel to \cite{sub1} we develop Bezout type theorems for semidegrees (including an explicit formula for {\em iterated semidegrees}) and an inequality for subdegrees. In addition we prove (in case of surfaces) a Bernstein type theorem for the number of solutions of two polynomials in terms of the mixed volume of planar convex polygons associated to them (via the theory of Kaveh-Khovanskii \cite{khovanskii-kaveh} and Lazarsfeld-Mustata \cite{lazarsfeld-mustata})
\end{abstract}

\tableofcontents

\section{Introduction} \label{sec-intro}
{\bf Disclaimer:} This is an unpolished draft of the article. A clearer exposition (with more complete reference) is in order and this submission will be updated in a few days.\\

This article is a sequel to \cite{sub1}. In it we develop affine Bezout type theorems. In Section \ref{sec-bezout} we show that given a polynomial system of $n$ equations on an $n$-dimensional affine variety, there are subdegrees which add nothing at infinity to generic fibers. In Section \ref{sec-Bezout} we find estimates for the number of solutions of the system in terms of the properties of the subdegree. The estimate is exact if the subdegree turns out to be a semidegree. If in addtion the semidegree is of a special class called {\em iterated} semidegrees, then the formula turns out to be explicit and this is handled in Section \ref{sec-iterated}. Finally, in Section \ref{sec-dim-2}, we show that the estimate for subdegrees is exact if $n = 2$. We also give an interpretation of this estimate in terms of the mixed volume of planar convex polygons associated to the subdegrees (via the theory of Kaveh-Khovanskii \cite{khovanskii-kaveh} and Lazarsfeld-Mustata \cite{lazarsfeld-mustata}).

\section{Existence of Intersection Preserving Filtrations} \label{sec-filtrexistence}
\newcommand{\ag}{\profingg{A}{\scrG}}
\newcommand{\xg}{X^\scrG}
\renewcommand{\filtrationring}{\kk[X]}
\newcommand{\apk}{\affine{p}{\kk}}
\newcommand{\aqk}{\affine{q}{\kk}}
\newcommand{\ark}{\affine{r}{\kk}}
\newcommand{\littleoh}{o}

Let $X$ be an affine variety over $\kk$. Recall that for subsets $V_1, \ldots, V_m$ of $X$, a completion $\psi: X \into Z$ is said to {\em preserve the intersection of $V_1, \ldots, V_m$ at $\infty$} if $\overline{V}_1 \cap \cdots \cap \overline{V}_m \cap X_\infty =  \emptyset$, where $X_\infty := Z\setminus X$ is the set of `points at infinity' and $\overline{V}_j$ is the closure of $V_j$ in $Z$ for every $j$.

\begin{lemma} \label{preservation-criterion}
Let $\scrF = \{F_d: d \geq 0\}$ be a complete filtration on $A := \kk[X]$, and $\psi_\scrF : X \into \xf := \proj \af$ be the corresponding completion. 
\begin{compactenum}
\item \label{homogeneous-closure} For each ideal $\qqq$ of $A$, let $\qqq^\scrF := \dsum_{d \geq 0}(\qqq \cap F_d) \subseteq \af$. Then the closure of $V(\qqq) \subseteq X$ in $\xf$ is $V(\qqq^\scrF)$. 
\item Let $V_1, \ldots, V_m$ be Zariski closed subsets of $X$ with $V_i = V(\qqq_i)$ for ideals $\qqq_i \subseteq A$ for each $i$. Let $\scrI$ be the ideal of $\af$ generated by $\qqq^\scrF_1, \ldots, \qqq^\scrF_m$ and $(1)_1$. Then $\psi_\scrF$ preserves the intersection of $V_1, \ldots, V_m$ at $\infty$ iff the $\sqrt \scrI \supseteq \af_+$, where $\af_+ := \dsum_{d>0}F_d$ is the {\em irrelevant} ideal of $\af$.
\end{compactenum}
\end{lemma}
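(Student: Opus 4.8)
The plan is to reduce both parts to standard facts about $\proj$ of a graded ring, applied to $\af=\dsum_{d\ge0}F_d$ with irrelevant ideal $\af_+=\dsum_{d>0}F_d$; throughout, for $a\in F_d$ I write $(a)_d$ for the corresponding degree-$d$ homogeneous element of $\af$. First I would recall from \cite{sub1} the concrete description of the completion $\psi_\scrF$: it identifies $X$ with the basic open set $D_+((1)_1)\subseteq\xf$ via the isomorphism $\af_{((1)_1)}\cong A$ sending $(a)_d/(1)_1^d\mapsto a$. This is precisely where completeness of $\scrF$ enters ($\bigcup_dF_d=A$ is what makes this map surjective), and it gives $X_\infty=\xf\setminus D_+((1)_1)=V_+((1)_1)$.

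For (1), I would first dispatch the easy point that $\qqq^\scrF$ is a homogeneous ideal of $\af$: if $(a)_e\in\af$ and $q\in\qqq\cap F_d$, then $aq\in\qqq$ and $aq\in F_eF_d\subseteq F_{d+e}$, so $(aq)_{d+e}\in\qqq^\scrF$. Next, under the isomorphism $\af_{((1)_1)}\cong A$ the extension of the ideal $\qqq^\scrF$ is exactly $\qqq$ (using $\qqq=\bigcup_d(\qqq\cap F_d)$, again by completeness), so $V_+(\qqq^\scrF)\cap D_+((1)_1)$ is carried to $V(\qqq)$. By the standard description of closures in a $\proj$, the closure of $V_+(\qqq^\scrF)\cap D_+((1)_1)$ in $\xf$ is $V_+\bigl(\qqq^\scrF:(1)_1^\infty\bigr)$, so it suffices to show that $\qqq^\scrF$ is saturated with respect to $(1)_1$, i.e. $\qqq^\scrF:(1)_1^\infty=\qqq^\scrF$. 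This is the one step that uses the internal structure of $\af$ rather than generalities: multiplication by $(1)_1$ merely re-indexes an element into higher filtered degree, so $(1)_1^n(a)_d=(a)_{d+n}$; hence if $(1)_1^n(a)_d\in\qqq^\scrF$ then $a\in\qqq\cap F_{d+n}$, and since $a\in F_d$ already this forces $(a)_d\in\qqq^\scrF$. Therefore $\overline{V(\qqq)}=V_+(\qqq^\scrF)$ in $\xf$.

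For (2), part (1) gives $\overline{V_i}=V_+(\qqq_i^\scrF)$ in $\xf$ for each $i$, so
\[
\overline{V_1}\cap\cdots\cap\overline{V_m}\cap X_\infty \;=\; V_+(\qqq_1^\scrF)\cap\cdots\cap V_+(\qqq_m^\scrF)\cap V_+((1)_1)\;=\;V_+(\scrI),
\]
and hence $\psi_\scrF$ preserves the intersection of $V_1,\dots,V_m$ at $\infty$ iff $V_+(\scrI)=\emptyset$. I would finish by recording the (standard, finiteness-free) fact that for a homogeneous ideal $\scrI$ of any graded ring, $V_+(\scrI)=\emptyset$ iff $\sqrt{\scrI}\supseteq\af_+$: if $V_+(\scrI)\neq\emptyset$, pick $\ppp\in V_+(\scrI)$, a homogeneous prime containing $\scrI$ but not $\af_+$, whence $\af_+\not\subseteq\ppp\supseteq\sqrt\scrI$; conversely $\sqrt\scrI$ is the intersection of the homogeneous primes containing $\scrI$, and each such prime lies outside $\proj\af$ exactly when it contains $\af_+$, so $V_+(\scrI)=\emptyset$ makes all of them contain $\af_+$, hence so does $\sqrt\scrI$. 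This yields the stated criterion.

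I do not anticipate a genuine obstacle: the only spot that requires care is the saturation identity $\qqq^\scrF:(1)_1^\infty=\qqq^\scrF$ — equivalently, that $V_+(\qqq^\scrF)$ acquires no spurious component at infinity — which is exactly where completeness of $\scrF$ is essential, together with the bookkeeping identifying $X$ with $D_+((1)_1)$ inside $\xf$.
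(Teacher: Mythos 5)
Your proposal is correct, and for part~(1) it takes a genuinely different route from the paper. The paper first passes to the Veronese-type refiltration $\scrG$ (with $G_k := F_{kd}$, so that $\ag$ is generated in degree~$1$), writes down the explicit closed embedding $\xg\into\pp^m(\kk)$ via a basis of $G_1$, computes the closure of $V(\qqq)$ there via the classical ``homogenize the defining ideal with respect to $y_0$'' recipe, identifies the result with $\qqq^\scrG$ by an explicit elementwise computation, and finally transfers back to $\scrF$ through the isomorphism $\xf\cong\xg$. You instead work intrinsically in $\proj\af$: identify $X$ with $D_+((1)_1)$, observe that $\qqq^\scrF$ contracts to $\qqq$ under $\af_{((1)_1)}\cong A$ (completeness used here), invoke the general fact $\overline{V_+(\mathfrak a)\cap D_+(f)}=V_+(\mathfrak a:f^\infty)$, and then prove the one nontrivial identity $\qqq^\scrF:(1)_1^\infty=\qqq^\scrF$ via the degree-shift $(1)_1^n(a)_d=(a)_{d+n}$. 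That saturation step is exactly the right replacement for the paper's homogenization computation, and it is cleaner: it isolates the structural reason (multiplication by $(1)_1$ only re-indexes) rather than chasing coordinates. What the paper's route buys is self-containedness in elementary terms (it never needs the abstract closure-in-$\proj$ lemma, only ordinary homogenization in $\pp^m$); what your route buys is brevity and the avoidance of the Veronese detour. For part~(2) both arguments coincide: reduce to $V_+(\scrI)=\emptyset$ and apply the graded Nullstellensatz.

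One small caveat: the standard closure-in-$\proj$ fact you invoke is typically stated (or at least most cleanly proved) under a Noetherian hypothesis, via minimal primes of $\mathfrak a:f^\infty$ not containing $f$. That is harmless here because completeness of $\scrF$ makes $\af$ a finitely generated $\kk$-algebra, but it is worth making the appeal explicit if you expand this into a full write-up.
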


\begin{proof}
\begin{asparaenum}
\item Recall (example \ref{proj-example4}) that there exists $d > 0$ such that $(\af)^{[d]} := \dsum_{k \geq 0}F_{kd}$ is generated by $F_d$ as a $\kk$-algebra. Define a new filtration $\scrG := \{G_k: k \geq 0\}$ on $A$ by $G_k := F_{kd}$. Let $\{1,g_1, \ldots, g_m\}$ be a $\kk$-vector space basis of $G_1$. Then $\ag \cong (\af)^{[d]}$ and by corollary \ref{completecor}, $\xg := \proj \ag$ is the closure in $\pp^m(\kk)$ of $\phi(X)$, where $\phi: X \to \affine{m}{\kk}$ is defined by: $\phi(x) := (g_1(x), \ldots, g_m(x))$.

\hspace{\itemindent} Let $\qqq$ be an ideal of $A$ and $V := V(\qqq)$ be the Zariski closed subset of $X$ defined by $\qqq$. Let $\ppp := \ker \phi^*$ and $\rrr := (\phi^*)^{-1}(\qqq)$, where $\phi^*: \kk[y_1, \ldots, y_m] \to A$ is the pull back by means of $\phi$. Identify $X$ with $V(\ppp)$ and $V$ with $V(\rrr)$ in $\affine{m}{\kk}$. Then $\xg$ and the closure ${\overline V}^\scrG$ of $V$ in $\xg$ are the Zariski closed subsets of $\pp^m(\kk)$ determined by the {\em homogenizations} $\tilde \ppp$ of $\ppp$ and, respectively, $\tilde \rrr$ of $\rrr$ with respect to $y_0$.

\hspace{\itemindent} Moreover, the closed embedding $\Phi: \xg \into \pp^m(\kk)$ is induced by the surjective homomorphism $\Phi^*: \kk[y_0,\ldots, y_m] \to \ag$ which maps $y_0 \mapsto (1)_1$ and $y_i \mapsto (g_i)_1$ for $1 \leq i \leq m$. Therefore ${\overline V}^\scrG$ in $\xg$ is defined by the ideal $\Phi^*(\tilde \rrr)$. But the $d$-th graded component of $\Phi^*(\tilde \rrr)$ is 
\begin{align*}
\Phi^*((\tilde \rrr)_d) &:= \{\Phi^*(\tilde f(y_0, \ldots, y_m)): \tilde f \in \tilde \rrr,\ \tilde f\ \text{homogeneous,}\ \deg(\tilde f) =d \} \\
										&= \{\tilde f((1)_1, (g_1)_{1}, \ldots, (g_m)_{1}): \\
										&\phantom{= \{} \tilde f\ \text{homogeneous in}\ y_0, \ldots, y_m,\ \deg(\tilde f) =d,\ \tilde f(1,y_1, \ldots, y_m) \in \rrr\}\\
										&= \{(\tilde f(1, g_1, \ldots, g_m))_d: \\
										&\phantom{= \{} \tilde f\ \text{homogeneous in}\ y_0, \ldots, y_m,\ \deg(\tilde f) =d, \tilde f(1,g_1, \ldots, g_m) \in \qqq\}\\
										&= \{(f(g_1, \ldots, g_m))_d: \\
										&\phantom{= \{} f\ \text{polynomial in}\ y_1, \ldots, y_m,\ \deg(f) \leq d,\ f(g_1, \ldots, g_m) \in \qqq\}\\
										&= \{(g)_d: g \in \qqq \cap G_d\},
\end{align*}
where the last equality is a consequence of $\ag = \kk[(1)_1, (g_1)_1, \ldots, (g_m)_1]$. Then $\Phi^*(\tilde \rrr) =$ $\dsum_{d \geq 0} \Phi^*((\tilde \rrr)_d) = \dsum_{d \geq 0} \{(g)_d: g \in \qqq \cap G_d\} = \qqq^\scrG$. Now recall (example \ref{proj-example3}) that $\xf$ and $\xg$ are isomorphic and this isomorphism is induced by the inclusion $\ag \subseteq \af$. Since $\qqq^\scrF \cap \ag = \qqq^\scrG$, it follows that the Zariski closed subset of $\xg$ determined by $\qqq^\scrG$ is isomorphic to the Zariski closed subset of $\xf$ determined by $\qqq^\scrF$.

\item $\psi_\scrF$ preserves the intersection of $V_1, \ldots, V_m$ at $\infty$ iff $\overline V_1 \cap \cdots \cap \overline{V}_m \cap X_\infty =  \emptyset$. By part \ref{homogeneous-closure} $\overline V_j = V(\qqq^\scrF_j)$ for each $j$, and by theorem \ref{completethm} $X_\infty = V((1)_1)$. Therefore $\overline V_1 \cap \cdots \cap \overline{V}_m \cap X_\infty$ is determined by the ideal of $\af$ generated by $(1)_1, \qqq^\scrF_1, \ldots, \qqq^\scrF_m$, which is precisely the definition of $\scrI$. Then the projective version of Nullstellensatz (see section \oldref{projective-subsection}) implies $V(\scrI) = \emptyset$ iff $\af_+ \subseteq \sqrt \scrI$. \qedhere
\end{asparaenum}
\end{proof}

\begin{thm}[see {\cite[Theorem 1.2(1)]{announcement}} and {\cite[Theorem 1.3.1]{preprint}}] \label{filtrexistence-thm1}
Let $V_1, \ldots, V_m$ be Zariski closed subsets in an affine variety $X$ such that $\cap_{i=1}^m V_i$ is a finite set. Then there is a complete filtration $\scrF$ on $\kk[X]$ such that $\psif$ preserves the intersection of the $V_i$'s at $\infty$.
\end{thm}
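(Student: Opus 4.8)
Write $A := \kk[X]$, choose ideals $\qqq_i \subseteq A$ with $V(\qqq_i) = V_i$, and set $\jjj := \qqq_1 + \cdots + \qqq_m$, so that $V(\jjj) = \bigcap_{i=1}^m V_i$ is finite by hypothesis. By Lemma~\ref{preservation-criterion}(2) the problem is to construct a complete filtration $\scrF = \{F_d\}$ on $A$ with $\sqrt{\scrI} \supseteq \af_+$, where $\scrI \subseteq \af$ is generated by $(1)_1$ and $\qqq_1^\scrF, \ldots, \qqq_m^\scrF$. The opening move is a reformulation: since $(1)_1 \in \scrI$, the quotient $\af/((1)_1)$ is exactly the associated graded ring $\gr^\scrF A = \dsum_{d \ge 0} F_d/F_{d-1}$, and under this quotient $\scrI$ goes to the ideal $\scrJ := \sum_{i=1}^m \dsum_d (\qqq_i \cap F_d + F_{d-1})/F_{d-1}$ while $\af_+$ goes to $(\gr^\scrF A)_+$; hence $\sqrt{\scrI} \supseteq \af_+$ is equivalent to $\sqrt{\scrJ} \supseteq (\gr^\scrF A)_+$. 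If moreover $\scrF$ is generated in degree one, that is $F_d = (F_1)^d$ for all $d \ge 1$, then $\gr^\scrF A$ is generated over $\kk$ by $(\gr^\scrF A)_1 = F_1/F_0$, so $(\gr^\scrF A)_+$ is the ideal generated by $(\gr^\scrF A)_1$, and it suffices to exhibit such an $\scrF$ for which every element of a fixed $\kk$-spanning set of $F_1/F_0$ lies in $\sqrt{\scrJ}$.

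The filtration is forced by the finiteness hypothesis. As $V(\jjj) = \bigcap_i V_i$ is finite, $A/\jjj$ is a finitely generated $\kk$-algebra with finite spectrum, hence a finite-dimensional $\kk$-vector space; so, fixing $\kk$-algebra generators $x_1, \ldots, x_N$ of $A$, the image of each $x_l$ in $A/\jjj$ is integral over $\kk$. Thus for every $l$ there is a monic $\pi_l(t) \in \kk[t]$ of degree $e_l \ge 2$ (replace a lower-degree one by its square) with $\pi_l(x_l) \in \jjj$, and we may fix a decomposition $\pi_l(x_l) = \sum_{i=1}^m h_{l,i}$ with $h_{l,i} \in \qqq_i$. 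Let $F_1$ be the $\kk$-linear span of $\{1\} \cup \{x_1, \ldots, x_N\} \cup \{\pi_l(x_l) : 1 \le l \le N\} \cup \{h_{l,i} : 1 \le l \le N,\ 1 \le i \le m\}$, and set $F_0 := \kk$, $F_d := (F_1)^d$ for $d \ge 1$. Since $F_1$ is finite-dimensional, contains $1$ and the algebra generators $x_l$, and $F_d = (F_1)^d$, the family $\scrF = \{F_d\}$ is a complete filtration generated in degree one, with associated completion $\psif : X \into \xf = \proj \af$.

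It remains to check that the images in $F_1/F_0$ of $x_l$, of $\pi_l(x_l)$, and of the $h_{l,i}$ — which span $F_1/F_0$ — all lie in $\sqrt{\scrJ}$. For $h_{l,i}$: it lies in $\qqq_i \cap F_1$, so its image lies in the degree-one piece of the $i$-th summand of $\scrJ$. For $\pi_l(x_l)$: applying the $\kk$-linear map $F_1 \to F_1/F_0$ to $\pi_l(x_l) = \sum_i h_{l,i}$ shows its image is the sum of the images of the $h_{l,i}$, so it lies in $\scrJ$. For $x_l$: writing $\pi_l(t) = t^{e_l} + \sum_{j=1}^{e_l} b_{l,j} t^{e_l - j}$ with $b_{l,j} \in \kk$ gives $x_l^{e_l} = \pi_l(x_l) - \sum_{j=1}^{e_l} b_{l,j} x_l^{e_l - j}$, in which $\pi_l(x_l) \in F_1 \subseteq F_{e_l - 1}$ and $x_l^{e_l - j} \in (F_1)^{e_l - j} = F_{e_l - j} \subseteq F_{e_l - 1}$ for $1 \le j \le e_l$; hence $x_l^{e_l} \in F_{e_l - 1}$, so the image $u_l$ of $x_l$ in $F_1/F_0$ has $u_l^{e_l}$ equal to the class of $x_l^{e_l}$ in $F_{e_l}/F_{e_l - 1}$, which is $0$, and therefore $u_l \in \sqrt{\scrJ}$. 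This gives $(\gr^\scrF A)_1 \subseteq \sqrt{\scrJ}$, hence $(\gr^\scrF A)_+ \subseteq \sqrt{\scrJ}$, hence $\af_+ \subseteq \sqrt{\scrI}$ by the reformulation, and Lemma~\ref{preservation-criterion}(2) finishes the proof.

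The step requiring the most care is the opening reformulation — checking that $\af/((1)_1) \cong \gr^\scrF A$ and identifying the images of $\scrI$ and of $\af_+$, i.e.\ that emptiness of $\overline{V}_1 \cap \cdots \cap \overline{V}_m \cap X_\infty$ in $\xf$ is faithfully encoded as emptiness of the common zero set of $\qqq_1^\scrF, \ldots, \qqq_m^\scrF$ in $\proj \gr^\scrF A$. Once this is in place there is no real obstacle: the substance is only that finiteness of $\bigcap_i V_i$ lets one integrally pin down each algebra generator of $A$ modulo $\sum_i \qqq_i$, and that feeding those relations into $F_1$ makes the corresponding generators nilpotent in $\gr^\scrF A$ while keeping the relations themselves (and their summands lying in the individual $\qqq_i$) inside $\scrJ$ — which is exactly enough to empty the common locus at infinity.
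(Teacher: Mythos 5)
Your proof is correct and follows essentially the same route as the paper's: both use the finiteness of $\bigcap_i V_i$ to get, for each algebra generator $x_l$ of $\kk[X]$, a monic relation $\pi_l(x_l)\in\qqq_1+\cdots+\qqq_m$ (the paper via an explicit Nullstellensatz argument on $\prod_j(x_i-a_{j,i})$, you via finite-dimensionality of $A/\jjj$), then build $F_1$ from $1$, the $x_l$, and the $\qqq_i$-pieces of those relations so that all degree-one generators of the irrelevant ideal have a power in $\scrI$. Your passage to $\gr^{\scrF}A \cong \af/((1)_1)$ and the observation that $x_l^{e_l}\in F_{e_l-1}$ (so $u_l$ is already nilpotent in the associated graded) is a mild cosmetic repackaging of the paper's direct computation $((\bar x_i)_1)^{d_i}\in\scrI$ in $\af$, not a genuinely different argument.
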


\begin{proof}
Let $X \subseteq \ank$ and the ideals in $\kk[x_1, \ldots, x_n]$ defining $X, V_1, \ldots, V_m$ be respectively $\ppp, \qqq_1, \ldots, \qqq_m$ with $\qqq_j \supseteq \ppp$ for each $j$.

\begin{claim*}
For each $i = 1, \ldots, n$, there is an integer $d_i \geq 1$ such that
\begin{align}
x_i^{d_i} &= f_{i,1} + \ldots + f_{i,m} + g_i \label{filtrexistence1-eqn1}
\end{align}
for some $f_{i,j} \in \qqq_j$ and a polynomial $g_i \in \kk[x_i]$ of degree less that $d_i$.
\end{claim*}

\begin{proof}
If $V_1 \cap \ldots \cap V_m = \emptyset$, then by Nullstellensatz $\langle \qqq_1, \ldots, \qqq_m \rangle$ is the unit ideal in $\kk[x_1, \ldots, x_n]$, and the claim is trivially satisfied with $g_i := 0$ for each $i$. So assume $$V_1 \cap \ldots \cap V_m = \{P_1, \ldots, P_k\} \subseteq \ank,$$
for some $k \geq 1$. Let $P_i = (a_{i,1}, \ldots, a_{i,n}) \in \ank$. For each $i = 1, \ldots, n$, let
$$h_i := (x_i - a_{1,i})(x_i - a_{2,i}) \cdots (x_i - a_{k,i}).$$
By Nullstellensatz, for some $d'_i \geq 1$, $h_i^{d'_i} \in \langle \qqq_1, \ldots, \qqq_m \rangle$, i.e. $h_i^{d'_i} = f_{i,1} + \ldots + f_{i,m}$ for some $f_{i,j} \in \qqq_j$. Substituting $h_i = \prod_j(x_i - a_{j,i})$ in the preceding equation we see that the claim holds with $d_i := kd'_i$.
\end{proof}

Below for  $S \subseteq \kk[X]$ we denote by $\kk\langle S \rangle$ the $\kk$-linear span of $S$, and for an element $g \in \kk[x_1, \ldots, x_n]$, we denote by $\bar g$ the image of $g$ in $\kk[X] = \kk[x_1, \ldots, x_n]/\ppp$. Fix a set of $f_{i,j}$'s satisfying the conclusion of the previous claim. Then define a filtration $\scrF$ on $\kk[X]$ as follows: let
\begin{align*}
	F_0 &:= \kk, \\
	F_1 &:= \kk\langle 1, \bar x_1, \ldots, \bar x_n, \bar f_{1,1}, \ldots, \bar f_{n,m} \rangle, \\
	F_k &:= F_1^k\ \textrm{for}\ k>1,\\
	\scrF &:= \{F_i:\ i\geq 0\}.
\end{align*}

Clearly $\scrF$ is a complete filtration. We now show that this $\scrF$ satisfies the conclusion of the theorem. By lemma \ref{preservation-criterion} this is equivalent to showing that $\sqrt \scrI \supseteq \kxf_+$, where $\scrI$ is the ideal generated by ${\bar \qqq_1}^\scrF, \ldots, {\bar \qqq_m}^\scrF$ and $(1)_1$ in $\kxf$.

\sloppy

From the construction of $\scrF$ it follows that $\kxf_+$ is generated by the elements $(1)_1, (\bar x_1)_1, \ldots, (\bar x_n)_1, (\bar f_{1,1})_1, \ldots, (\bar f_{n,m})_1$. Note that $\bar f_{i,j} \in \bar \qqq_j$ for each $i,j$, so that $(\bar f_{i,j})_1 \in {\bar \qqq_j}^\scrF \subseteq \scrI$. Moreover, $(1)_1 \in \scrI$. So, all we really need to show is that $(\bar x_i)_1 \in \sqrt \scrI$ for all $i = 1, \ldots, n$.

\fussy

Reducing equation \eqref{filtrexistence1-eqn1} mod $\ppp$, we have $(\bar x_i)^{d_i} = \bar f_{i,1} + \ldots + \bar f_{i,m} + \bar g_i \in \kk[X]$ for all $i = 1, \ldots, n$. Let $g_i = \sum_{j=0}^{d_i-1}a_{i,j}x_i^j$. Then in $\kxf$,
$$((\bar x_i)_1)^{d_i} = ((1)_1)^{d_i-1}((\bar f_{i,1})_1 + \ldots + (\bar f_{i,m})_1) + \sum_{j=0}^{d_i-1}a_{i,j}((\bar x_i)_1)^j((1)_1)^{d_i-j}.$$
All of the summands in the right hand side lie inside $\scrI$, hence $((\bar x_i)_1)^{d_i} \in \scrI$ for all $i = 1, \ldots, n$, as required.
\end{proof}

Recall that given a polynomial map $f=(f_1, \ldots, f_q):X \to \aqk$, $a = (a_1, \ldots, a_q) \in \aqk$ and a completion $\psi$ of $X$, $\psi$ {\em preserves $\{f_1, \ldots, f_n\}$ at $\infty$ over $a$} if $\psi$ preserves the intersection of the hypersurfaces $H_i(a) := \{x \in X: f_i(x) = a_i\}$, $i = 1, \ldots, q$.

\begin{example}\label{filtrexistence-example1}
Consider map $f:\affine{2}{\kk} \to \affine{2}{\kk}$ given by $f(x,y) := (x, y + x^3)$. For $a := (a_1, a_2) \in \kk^2$, 
\begin{align*}
H_1(a) &= \{(a_1, y): y \in \kk\},\\
H_2(a) &= \{(x,a_2-x^3):x \in \kk\}.
\end{align*}
We claim that in the usual completion $\pp^{2}(\kk)$ of $\kk^2$, the closures of $H_1(a)$ and $H_2(a)$ intersect at a point $P$ at infinity for each $a \in \kk^2$, and hence $\pp^{2}(\kk)$, as the natural completion of $\affine{2}{\kk}$, does not preserve $\{f_1, \ldots, f_n\}$ at $\infty$ over any point of $\affine{2}{\kk}$. 

Indeed, write the homogeneous coordinates of $\pp^2(\kk)$ as $[z:x:y]$ and identify $\affine{2}{\kk}$ with $\pp^2(\kk)\setminus V(z)$. Let $a \in \affine{2}{\kk}$. When $\kk = \cc$, the `infinite' points in $\overline H_i(a)$ can be described as the limits of points in $H_i(a)$. Therefore, the points at infinity of $H_1(a)$ are $\lim_{|y| \to \infty} [1:a_1:y] = \lim_{|y| \to \infty} [1/y:a_1/y:1] = [0:0:1]$. Similarly, the infinite part of $H_2(a)$ is $\lim_{|x| \to \infty} [1:x:a_2 - x^3] = \lim_{|x| \to \infty} [1/(a_2 - x^3):x/(a_2 - x^3):1] = [0:0:1]$, and hence the claim is true with $P := [0:0:1]$. 

To verify the claim for an arbitrary $\kk$, one has to apply lemma \ref{preservation-criterion} with $X = \affine{2}{\kk}$ and calculate $\overline H_i(a) \cap X_\infty = V(\qqq_i^\scrF(a), (1)_1)$, where $\qqq_i(a)$ is the ideal of $H_i(a)$. A straightforward calculation shows: the graded ring $\kxf$ corresponding to the embedding $\affine{2}{\kk} \into \pp^2(\kk)$ is isomorphic to $\kk[x,y,z]$ where $z$ plays the role of $(1)_1$, and $\qqq_i^\scrF(a)$ is the homogenization of $\qqq_i(a)$ with respect to $z$. Then $\qqq_1(a) = \langle x - a_1 \rangle$ and $\qqq_2(a) = \langle y+ x^3 - a_2 \rangle$, so that $\qqq_1^\scrF(a) = \langle x - a_1z \rangle$ and $\qqq_2^\scrF(a) = \langle yz^2+ x^3 - a_2z^3 \rangle$. Therefore $\overline H_1(a) \cap X_\infty = V(x - a_1z, z) = V(x,z) = \{[0:0:1]\}$. Similarly $\overline H_2(a) \cap X_\infty = V(yz^2 +x^3 - a_2z, z) = V(x,z) = \{[0:0:1]\}$ and the claim is valid with the same $P$ as in $\kk= \cc$ case.

Because it is simpler to describe, we will from now on frequently use only the limit argument (valid only for $\kk = \cc$) in order to find the points at infinity of various subvarieties of a given $X$. In all these cases, the analogous results also follow over an arbitrary algebraically closed field $\kk$ by means of straightforward calculations (and if $\Char \kk = 0$, by Tarski-Lefschetz principle).

We now find, following the proof of Theorem \ref{filtrexistence-thm1}, a completion of $\affine{2}{\kk}$ which preserves $\{f_1, \ldots, f_n\}$ at $\infty$ over $0$. In the notation of theorem \ref{filtrexistence-thm1}, $\qqq_1	= \langle x \rangle$ and $\qqq_2	= \langle y+x^3 \rangle$. Observe that $x \in \qqq_1$, and $y$ satisfies
\begin{align*}
y =  - x^3 + (y+x^3),
\end{align*}
with $x^3 \in \qqq_1$ and $y+x^3 \in \qqq_2$. Let filtration $\scrF:= \{F_i: i\geq 0\}$ on $\kk[x,y]$ be defined as follows: $F_0 := \kk$, $F_1 := \kk\langle 1, x, y, x^3 \rangle$, and $F_k := (F_1)^k$ for $k>1$. Then as in the proof of theorem \ref{filtrexistence-thm1}, completion $\psi_\scrF$ preserves $\{f_1, \ldots, f_n\}$ at $\infty$ over $0$. Let us now show this directly. By corollary \ref{completecor}, the corresponding completion $\xf$ is isomorphic to the closure in $\pp^3(\kk)$ of the image of $\phi: \affine{2}{\kk} \to \pp^3(\kk)$, where $\phi(x,y) := [1:x:y:x^3]$. Then $\phi(H_1(a)) = \{[1:a_1:y:a_1^3]: y \in \kk\}$ and limit $\lim_{y \to \infty}[1:a_1:y:a_1^3] =$ $\lim_{y \to \infty}[1/y:a_1/y:1:a_1^3/y] = [0:0:1:0]$, for $a \in \affine{2}{\kk}$, so that the only point at infinity of $\overline H_1(a)$ is $[0:0:1:0]$. Similarly, $\phi(H_1(a)) = \{[1:x:a_2 - x^3:x^3]: x \in \kk\}$ and $\lim_{x \to \infty}[1:x:a_2 - x^3:x^3] = \lim_{x \to \infty}[1/x^3:1/x^2:(a_2 - x^3)/x^3:1] = [0:0:-1:1]$. Therefore $\overline H_2(a)$ also has only one point at infinity and it is $[0:0:-1:1]$. It follows that $\overline H_1(a) \cap \overline H_2(a) \cap X_\infty = \emptyset$ for all $a$, i.e. $\xf$ preserves $\{f_1, \ldots, f_n\}$ at $\infty$ over {\em every} point of $\affine{2}{\kk}$.
\end{example}

\begin{example}\label{filtrexistence-example2}
Let $f(x,y) := (x,y)$ on $\kk^2$. Then for each $a = (a_1, a_2) \in \kk^2$, $H_1(a) = \{(a_1,y): y \in \kk\}$ and $H_2(a) = \{(x,a_2): x \in \kk\}$. Consider filtration $\scrF$ on $\kk[x,y]$ defined by: $F_0 := \kk$, $F_1 := \kk\langle 1, x, y, xy, x^2y^2 \rangle$, and $F_k := (F_1)^k$ for $k \geq 2$. By corollary \ref{completecor}, $\xf$ is the closure of the image of $\affine{2}{\kk}$ under the map $\phi:\affine{2}{\kk} \into \pp^4(\kk)$ defined by: $\phi(x,y) = [1:x:y:xy:x^2y^2]$. Then $\phi(H_1(a)) = \{[1:a_1:y:a_1 y:a_1^2y^2]: y \in \kk\}$. If $a_1=0$, then $\phi(H_1(a)) = \{[1:0:y:0:0]: y \in \kk\}$, and hence the only point at infinity in $\overline{\phi(H_1(a))}$ is $[0:0:1:0:0]$. But if $a_1 \neq 0$, then dividing all coordinates by $a_1^2y^2$, we see that the point at infinity in $\overline{\phi(H_1(a))}$ is $[0:0:0:0:1]$. Similarly, $\phi(H_2(a)) = \{[1:x:a_2:y:a_2 x:a_2^2x^2]: x \in \kk\}$ and the only point at infinity in $\overline{\phi(H_2(a))}$ is $[0:1:0:0:0]$ if $a_2=0$, and $[0:0:0:0:1]$ if $a_2 \neq 0$. Therefore $\xf$ preserves $\{f_1, \ldots, f_n\}$ at $\infty$ over $a$ iff $a_1 = 0$ or $a_2 = 0$, i.e. iff $a$ belongs to the union of the coordinate axes.
\end{example}

Let $f: X \to Y$ be a generically finite map of affine varieties of the same dimension. Given any $y \in Y$ such that $\finv(y)$ is finite, theorem \ref{filtrexistence-thm1} guarantees the existence of a projective completion of $X$ that preserves $\{f_1, \ldots, f_n\}$ at $\infty$ over $y$. But as the preceding example shows, the completion might fail to preserve  $\{f_1, \ldots, f_n\}$ at $\infty$ over `most of the' points in the image of $f$. This suggests that we should look for a completion $\psi$ which preserves $\{f_1, \ldots, f_n\}$ at $\infty$ over $y$ for {\em generic} $y \in Y$, i.e. $\psi$ {\em preserves $\{f_1, \ldots, f_n\}$ at $\infty$}. We will demonstrate two ways to accomplish this goal - we start with a simpler-to-prove theorem \ref{filtrexistence-thm2} and will present a stronger version in theorem \ref{filtrexistence-thm3} following (cf. \cite[Theorem 1.2]{announcement} and \cite[Theorem 1.3.4]{preprint}).

\begin{thm}\footnote{The idea of looking at the construction of theorem \ref{filtrexistence-thm2} is due to Professor A. Khovanskii.} \label{filtrexistence-thm2}
Let $f: X \to Y \subseteq \aqk$ be a generically finite map of affine varieties of same dimension. Include $\aqk$ into $(\pp^1(\kk))^q$ via the componentwise inclusion $(a_1, \ldots, a_q) \mapsto ([1:a_1], \ldots, [1: a_q])$. Let $\phi: X \into Z$ be any completion of $X$. Define $\bar X$ to be the closure of the graph of $f$ in $Z \times (\pp^1(\kk))^q$. Then $\bar X$ preserves $\{f_1, \ldots, f_n\}$ at $\infty$. If $\phi$ comes from some filtration on $\kk[X]$, then there is a filtration $\scrF$ on $\kk[X]$ and a commutating diagram as follows:
$$\xymatrix@R-1pc@C-1pc{
						& X \ar[1,-1] \ar[1,1] 	&\\
\bar X \ar[0,2]^\cong 	& 						& \xf}$$
\end{thm}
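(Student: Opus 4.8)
The plan is to split the statement into two assertions: (a) the completion $\bar X$ (the closure of the graph of $f$ in $Z \times (\pp^1(\kk))^q$) preserves $\{f_1, \ldots, f_n\}$ at $\infty$, and (b) when $\phi$ arises from a filtration, $\bar X$ is itself isomorphic (over $X$) to $\xf$ for a suitable filtration $\scrF$. For (a), first I would fix generic $a = (a_1, \ldots, a_q) \in \aqk$ and describe the points of $\bar X$ lying over $X_\infty^Z := Z \setminus \phi(X)$: a point of $\bar X \setminus X$ has the form $(z, [u_1:v_1], \ldots, [u_q:v_q])$ with either $z \in X_\infty^Z$ or some $v_i = 0$. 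The hypersurface $H_i(a) = \{x : f_i(x) = a_i\}$ has closure $\overline{H_i(a)}$ in $\bar X$ contained in $\{[u_i:v_i] = [a_i:1]\} = \{u_i - a_i v_i = 0\}$. Hence a point of $\bigcap_{i=1}^q \overline{H_i(a)} \cap (\bar X \setminus X)$ must have $i$-th $\pp^1$-coordinate equal to $[a_i:1]$ for every $i$, so in particular $v_i \neq 0$ for all $i$; therefore such a point projects to a point $z \in X_\infty^Z$ and to the finite tuple $(a_1, \ldots, a_q) \in \aqk$. But then $z$ lies in the closure in $Z$ of $f^{-1}(a) \subseteq X$, which is a finite set (as $f$ is generically finite and $a$ generic), so its closure in $Z$ is contained in $\phi(X)$, i.e.\ $z \notin X_\infty^Z$ — a contradiction. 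Thus $\bigcap_i \overline{H_i(a)} \cap X_\infty = \emptyset$ for generic $a$, which is exactly the assertion that $\bar X$ preserves $\{f_1,\ldots,f_n\}$ at $\infty$.

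For (b), suppose $\phi = \psi_\scrG : X \into Z = \xg = \proj \ag$ comes from a complete filtration $\scrG = \{G_d\}$ on $A := \kk[X]$. The idea is to build $\scrF$ from $\scrG$ by adjoining, at level $1$, enough functions to record the extra $\pp^1$-coordinates. Concretely, after passing to a Veronese-type rescaling of $\scrG$ as in the proof of Lemma \ref{preservation-criterion}\ref{homogeneous-closure} so that $Z$ is cut out by a basis $\{1, g_1, \ldots, g_m\}$ of $G_1$ inside $\pp^m(\kk)$, I would set
\begin{align*}
F_1 &:= \kk\langle 1, g_1, \ldots, g_m, f_1, \ldots, f_q \rangle, \qquad F_k := F_1^k \ (k > 1),
\end{align*}
and $\scrF := \{F_k\}$. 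This is a complete filtration, and by Corollary \ref{completecor} the associated completion $\xf$ is the closure of the image of $X$ under $x \mapsto [1 : g_1(x) : \cdots : g_m(x) : f_1(x) : \cdots : f_q(x)]$ in $\pp^{m+q}(\kk)$. I would then exhibit the isomorphism $\bar X \cong \xf$ over $X$ by comparing this embedding with the embedding of $\bar X$ in $\pp^m(\kk) \times (\pp^1(\kk))^q$: the Segre-type map on the open part where none of the $f_i$-homogenizing coordinates vanishes matches the two, and since both are the closures of the same locally closed image of $X$ (which is dense in each), the birational map between them extends to an isomorphism; the commuting triangle with $X$ is immediate because both maps restrict to $\phi$ (resp.\ $\psi_\scrF$) on $X$ and the projections $\bar X \to Z$, $\xf \to Z$ are compatible.

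The main obstacle I expect is the bookkeeping in (b): one must be careful that the rescaling of $\scrG$ needed to make Lemma \ref{preservation-criterion}\ref{homogeneous-closure} applicable does not disturb the completion $Z$ up to isomorphism (this is handled by Example \ref{proj-example3}, as used in that lemma's proof), and that the comparison of $\bar X \subseteq Z \times (\pp^1(\kk))^q$ with a subvariety of a single projective space is done via the correct product-to-projective map so that closures correspond. The first assertion (a) is comparatively soft once the description of the boundary points of $\bar X$ is in place; the only subtlety there is genericity of $a$, which I would phrase as: $a$ avoids the proper closed subset of $Y$ over which $f$ fails to be finite (its non-properness locus together with the branch/indeterminacy data), so that $f^{-1}(a)$ is finite and closed in $Z$.
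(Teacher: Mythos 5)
Your proposal has the right skeleton for (a) but contains a logical gap, and for (b) it contains a concrete error; let me take these in turn.

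\textbf{Part (a).} You correctly observe that a point of $\bigcap_{i}\overline{H_i(a)}\cap(\bar X\setminus X)$ must be of the form $(z,a)$ with $z\in X_\infty^Z:=Z\setminus\phi(X)$. But the next step --- ``then $z$ lies in the closure in $Z$ of $f^{-1}(a)$'' --- does not follow. Membership in $\bigcap_i\overline{H_i(a)}$ is an intersection of closures, not the closure of the intersection $\bigcap_i H_i(a)=f^{-1}(a)$; a sequence $x_\nu\in H_1(a)$ with $(\phi(x_\nu),f(x_\nu))\to(z,a)$ only has $f_1(x_\nu)=a_1$ exactly, with the other $f_j(x_\nu)$ merely converging to $a_j$, so $x_\nu\notin f^{-1}(a)$ in general and $z$ need not accumulate on $f^{-1}(a)$. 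What \emph{does} follow, and what you should use, is that the mere existence of $(z,a)\in\bar X$ with $z\in X_\infty^Z$ forces $a$ into a proper closed subset of $\bar Y$. The paper packages this as $\tilde V:=(V\times(\pp^1(\kk))^q)\cap\bar X$ with $V:=Z\setminus X$, a proper closed subset of $\bar X$, and the observation that if preservation fails over $y$ then $y\in\pi(\tilde V)$, which is proper closed in $\bar Y$ because $\pi$ is proper. This is essentially the ``non-properness locus of $f$'' that you gesture toward at the end, but the correct logical step is ``such a $(z,a)$ exists $\Rightarrow$ $a\in\pi(\tilde V)$,'' not ``$z\in\overline{f^{-1}(a)}$.''

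\textbf{Part (b).} The filtration you propose, with
$F_1:=\kk\langle 1,g_1,\ldots,g_m,f_1,\ldots,f_q\rangle$, is too small and does not give $\xf\cong\bar X$. Concretely, take $X=\affine{2}{\kk}$ with the usual completion $\phi:X\into\pp^2(\kk)$ (so $m=2$, $g_1=x_1$, $g_2=x_2$) and $f=(x_1,x_2)=\identity$. Then your $F_1=\kk\langle 1,x_1,x_2\rangle$ and $\xf\cong\pp^2(\kk)$. But $\bar X\subseteq\pp^2(\kk)\times\pp^1(\kk)\times\pp^1(\kk)$ contains the entire curve $\{([0:1:0],[0:1],[c:d]):[c:d]\in\pp^1(\kk)\}$ (approach via $(x_1,x_2)=(t,b)$ with $t\to\infty$ and $b$ arbitrary), so the projection $\bar X\to\pp^2(\kk)$ has a positive-dimensional fiber and is \emph{not} an isomorphism. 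The birational map $\bar X\dashrightarrow\xf$ is not an isomorphism merely because both are closures of the same dense image of $X$: two such closures are birational, not necessarily isomorphic. The paper instead composes $\psi:X\into Z\times(\pp^1(\kk))^q$ with the genuine Segre embedding $\pp^p(\kk)\times(\pp^1(\kk))^q\into\pp^l(\kk)$, $l=(p+1)2^q-1$; because Segre is a closed immersion, the image of $\bar X$ in $\pp^l(\kk)$ \emph{is} isomorphic to $\bar X$, and Corollary \ref{completecor} then produces $\scrF$. The resulting $F_1$ is spanned by all products $g_i\prod_{k\in S}f_k$ with $0\le i\le m$ (taking $g_0=1$) and $S\subseteq\{1,\ldots,q\}$ --- in particular it contains all the cross-terms $g_if_k$ that your $F_1$ is missing, and these are exactly what is needed to separate the boundary points of $\bar X$.
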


\begin{proof}
Let $\pi:= (\pi_1, \ldots, \pi_q): Z \times (\pp^1(\kk))^q \to (\pp^1(\kk))^q$ be the natural projection. Then $\pi$ maps $\bar X$ onto the closure $\bar Y$ of $Y$ in $(\pp^1(\kk))^q$. Let $V := Z \setminus X$. Then $\tilde V := (V \times (\pp^1(\kk))^q) \cap \bar X$ is a proper Zariski closed subset of $\bar X$. Since $Z$ is complete, it follows that $\pi(\tilde V)$ is a proper Zariski closed subset of $\bar Y$. We now show that for all $y \in Y \setminus \pi(\tilde V)$, $\bar X$ preserves $\{f_1, \ldots, f_n\}$ at $\infty$ over $y$.

Pick an arbitrary $y:=(y_1, \ldots, y_q) \in Y$ such that $\bar X$ does not preserve $\{f_1, \ldots, f_n\}$ at $\infty$ over $y$. It suffices to show that $y \in \pi(\tilde V)$. As usual, let $H_i(y) := \{x \in X: f_i(x) = y_i\}$. By assumption there is $\tilde x \in \bar H_1(y) \cap \cdots \cap \bar H_q(y) \cap (\bar X \setminus X)$, where for each $k$, $\bar H_k(y)$ is the closure of $H_k(y)$ in $\bar X$. Fix a $k$, $1 \leq k \leq q$. Note that $\pi_k(H_k(y)) = \{y_k\}$. By continuity of $\pi_k$ it follows that $\pi_k(\bar H_k(y)) = \{y_k\}$. But then $\pi_k(\tilde x) = y_k$. It follows that $\pi(\tilde x) = y$ and hence $\tilde x$ is of the form $(z,y)$ for some $z \in Z$. We claim that $z$ does not lie in $X$. Indeed, if $z \in X$, it would imply $\tilde x \in (X \times Y) \cap (\bar X \setminus \psi(X))$, where $\psi: X \into \bar X$ is the inclusion. Consider the chain of inclusions: $\psi(X) \subseteq X \times Y \subseteq Z \times (\pp^1(\kk))^q$. Note: 

\begin{asparaenum}
\item $\psi(X)$ is the graph of $f$ in $X \times Y$, and hence is a Zariski closed subvariety of $X \times Y$.
\item $X \times Y$ is Zariski {\em open} in $Z \times (\pp^1(\kk))^q$.
\item If $T \subseteq U \subseteq W$ are topological spaces such that $T$ is closed in $U$ and $U$ is open in $W$, then $\bar T \cap U = T$, where $\bar T$ is the closure of $T$ in $W$. 
\end{asparaenum}

Since $\bar X$ is by definition the closure of $\psi(X)$ in $Z \times (\pp^1(\kk))^q$, it follows via the above observations that $\bar X \cap (X \times Y) = \psi(X)$, so that $(X \times Y) \cap (\bar X \setminus \psi(X)) = \emptyset$. This contradiction proves the claim. It follows that $z \in Z\setminus X = V$. Then $\tilde x \in \tilde V$. Therefore $y = \pi(\tilde x) \in \pi(\tilde V)$ and the first claim of the theorem is proved. 

As for the last claim, note that if $\phi$ comes from a filtration, then by corollary \ref{completecor} we may assume that $Z \subseteq \pp^p(\kk)$ for some $p$ and $\phi(x) = [1:g_1(x): \cdots : g_p(x)]$ for some $g_1, \ldots, g_p \in \kk[X]$. Hence the inclusion $\psi:X \into \bar X$ is of the form: $\psi(x) = ([1:g_1(x): \cdots : g_p(x)],[1:f_1(x)], \ldots, [1:f_q(x)])$. Let $l := (p+1)2^q - 1$ and let us embed $\pp^p(\kk) \times (\pp^1(\kk))^q \into \pp^l(\kk)$ via the {\em Segre} embedding $s$ which maps $w := ([w_0: \cdots : w_p], [w_{1,0}: w_{1,1}], \ldots, [w_{q,0}: w_{q,1}])$ to the point $s(w)$ whose homogeneous coordinates are monomials of degree $q+1$ in $w$ of the form $w_iw_{1,j_1}w_{2,j_2}\cdots w_{q,j_q}$ where $0 \leq i \leq p$ and $0 \leq j_k \leq 1$ for each $k$. The component of $s\circ \psi$ corresponding to $i = j_1 = \cdots = j_q = 0$ is $1$ and hence $s \circ \psi$ maps $x \in X$ to a point with homogeneous coordinates $[1: h_1(x): \cdots : h_l(x)]$ for some $h_1, \ldots, h_l \in \kk[X]$. Then corollary \ref{completecor} implies that there is a filtration $\scrF$ on $\kk[X]$ such that the closure $\bar X$ of $s \circ \psi(X)$ in $\pp^l(\kk)$ is isomorphic to $\xf$ via an isomorphism which is identity on $X$. Morphism $s$ being an isomorphism completes the proof.
\end{proof}

\begin{rem} \label{preserved-set}
Let $f: X \to Y$ be a map of $n$-dimensional affine varieties with generically finite fibers and $\psi$ be a completion of $X$. Define $S_\psi := \{a \in f(X): \psi$ preserves $\{f_1, \ldots, f_n\}$ at $\infty$ over $y\}$. It will be interesting to know if $S_\psi$ has any intrinsic structure. In example \ref{filtrexistence-example2} $S_\psi$ was the union of two coordinate axes in $\affine{2}{\kk}$, and hence a proper closed subset of $f(X)$. On the other hand, theorem \ref{filtrexistence-thm2} shows that there are completions $\psi$ of $X$ such that $S_\psi$ contains a dense open subset of $f(X)$. We now give an example where $S_\psi$ is indeed a proper dense open subset of $f(X)$, namely:

Let $X = Y = \affine{2}{\cc}$ and $f:X \to Y$ be the map defined by $f_1 := x_1^3 + x_1^2x_2 + x_1x_2^2 - x_2$ and $f_2 := x_1^3 + 2x_1^2x_2 + x_1x_2^2 - x_2$. It is easy to see that $f$ is quasifinite. Let $\phi:X \into \pp^2(\cc)$ be the usual completion, and let $\psi: X \into \bar X$ be as in theorem \ref{filtrexistence-example2}. Let the coordinates of $\pp^2(\cc)$ be $[Z:X_1:X_2]$. Identify $X$ with $\pp^2(\cc) \setminus V(Z)$, so that $x_i = X_i/Z$ for $i = 1,2$. Then $X \ni (x_1,x_2) \overset{\psi}{\mapsto} ([1:x_1:x_2], [1:f_1(x)], [1:f_2(x)])$. We claim that $f(X)\setminus S_\psi$ is the line $L :=\{(c,c): c \in \cc\}$.

Indeed, let $a := (a_1, a_2) \in Y$. Define, as usual, $H_i(a) := \{x \in X: f_i(x) = a_i\}$ for $i = 1,2$. Let $C_i(a)$ be the closure in $\pp^2(\cc)$ of $H_i(a)$ for each $i$. It is easy to see that $P := [0:0:1] \in C_1(a) \cap C_2(a)$. Choose local coordinates $\xi_1 := X_1/X_2$ and $\xi_2 := Z/X_2$ of $\pp^2(\cc)$ near $P := [0:0:1]$. Equations of $f_{i,a}$ in $(\xi_1,\xi_2)$ coordinates are:
\begin{align}
\begin{split} \label{in-xi-coords}
f_{1,a} &= \xi_1^3 + \xi_1^2 + \xi_1 - \xi_2^2 - a_1\xi_2^3 \\
f_{2,a} &= \xi_1^3 + 2\xi_1^2 + \xi_1 - \xi_2^2 - a_2\xi_2^3
\end{split}
\end{align}
It follows that for each $i$, $C_i(a)$ is smooth at $P$ (in particular, each has only one branch at $P$) and both admit parametrizations at $P$ of the form
\begin{align} \label{parametrizations} 
\gamma_{i,a}(t) := [t:t^2 + \littleoh(t^3):1],
\end{align}
where $\littleoh(t^3)$ means terms of order $t^3$ and higher. In $(x_1, x_2)$ coordinates the parametrizations are of the form: $(x_1(t), x_2(t)) = (t +\littleoh(t^2), 1/t)$ for $t \neq 0$. Since $f_2(x) = f_1(x) + x_1^2x_2$, it follows that for $t \neq 0$, 
\begin{align*}
\psi(\gamma_{1,a}(t)) &= (\gamma_{1,a}(t), [1:a_1], [1: a_1 + \frac{(t +\littleoh(t^2))^2}{t}]) \\
											&= (\gamma_{1,a}(t), [1:a_1], [1: a_1 + t +\littleoh(t^2)])   
\end{align*}
Therefore $\lim_{t \to 0} \psi(\gamma_{1,a}(t)) = (P, [1:a_1], [1:a_1])$. Since $f_1(x) = f_2(x) - x_1^2x_2$, the same argument also gives $\lim_{t \to 0} \psi(\gamma_{2,a}(t)) = (P, [1:a_2], [1:a_2])$. 

To summarize, we proved that if $a \in L$, then $(P,[1:a_1], [1:a_1]) \in \bar H_1(a) \cap \bar H_2(a) \cap X_\infty$, where as usual $\bar H_i(a)$ is the closure of $H_i(a)$ in $\bar X$ and $X_\infty := \bar X \setminus X$. It follows that $L \subseteq f(X)\setminus S_\psi$. 

To prove the other inclusion, assume $a \in f(X)\setminus S_\psi$. Pick $z \in \bar H_1(a) \cap \bar H_2(a)\cap X_\infty$. Then $z = (Q, [1:a_1], [1:a_2])$ for a point $Q \in \pp^2(\cc)$. Therefore it follows that $Q \in (C_1(a) \setminus H_1(a)) \cap (C_2(a) \setminus H_2(a))$, where curves $C_i(a)$ are the closures in $\pp^2(\cc)$ of $H_i(a)$, $i=1,2$. But the only possible choice for such point $Q$ is point $P$. Therefore limit $\lim_{t \to 0} \psi(\gamma_{1,a}(t)) = z = \lim_{t \to 0} \psi(\gamma_{2,a}(t))$, which implies that $(P, [1:a_1], [1:a_1]) =$ $(P, [1:a_2], [1:a_2])$. Therefore $a_1 = a_2$ and $a \in L$, as claimed.
\end{rem}

Let $f:X \to Y \subseteq \aqk$ be as in theorem \ref{filtrexistence-thm2}. In the theorem following we find completions with even stronger preservation property at $\infty$, namely completions that {\em preserve map $f$ at $\infty$} (remark-definition \ref{preserve-map-defn}).

\begin{thm}[cf. {\cite[Theorem 1.2(2)]{announcement}} and {\cite[Theorem 1.3.4]{preprint}}]\label{filtrexistence-thm3}
Let $f: X \to Y \subseteq \aqk$ be a generically finite map of affine varieties of the same dimension. Then there is a complete filtration $\scrF$ on the coordinate ring of $X$ such that $\psi_\scrF$ preserves map $f$ at $\infty$.
\end{thm}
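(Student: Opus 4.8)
The plan is to imitate the proof of Theorem \ref{filtrexistence-thm1} to produce a first filtration, and then to improve it by a Noetherian induction. Replacing $Y$ by $\overline{f(X)}$ we may assume $f$ is dominant, and we fix embeddings $X \subseteq \ank$, $Y \subseteq \aqk$. I claim it suffices to produce a complete filtration $\scrF$ on $\kk[X]$ such that $f$ extends to a morphism $\bar f\colon \xf \to \bar Y$ onto a projective completion $\bar Y$ of $Y$, with the property that the part of $X_\infty := \xf \setminus \psif(X)$ that $\bar f$ maps \emph{into} $Y$ lies over the non-quasi-finite locus $\Sigma \subseteq Y$ of $f$ (the image in $Y$ of the closed set of $x \in X$ with $\finv(f(x))$ positive-dimensional). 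Indeed, for such $\scrF$ and any $y \in Y \setminus \Sigma$ the fibre $\bar f^{-1}(y)$ is a finite set disjoint from $X_\infty$, hence equals $\finv(y)$, so $\overline{H_1(y)} \cap \cdots \cap \overline{H_q(y)} \cap X_\infty \subseteq \bar f^{-1}(y) \cap X_\infty = \emptyset$; thus $\psif$ preserves $\{f_1,\ldots,f_q\}$ at $\infty$ over every $y \notin \Sigma$, which (cf.\ remark-definition \ref{preserve-map-defn}) is what it means to preserve the map $f$ at $\infty$. Note $\Sigma$ is genuinely unavoidable: over a point of $\Sigma$ the fibre of $f$ is a positive-dimensional closed affine subvariety, whose closure necessarily meets $X_\infty$.

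\emph{The basic filtration.} Since $f$ is generically finite, each $\bar x_i \in \kk[X]$ is algebraic over $f^*\kk(Y)$; clearing denominators gives a relation $\sum_{j=0}^{m_i} f^*(c_{i,j})\,\bar x_i^{\,j} = 0$ in $\kk[X]$ with $c_{i,j} \in \kk[Y]$ and $c_{i,m_i}\neq 0$. Let $F_1$ be the $\kk$-span of $1$, of $\bar x_1,\ldots,\bar x_n$, of $f_1,\ldots,f_q$, and of the pull-backs of all monomials in $y_1,\ldots,y_q$ occurring in the $c_{i,j}$, set $F_k := F_1^k$ for $k>1$ (rescaling degrees, exactly as in Theorem \ref{filtrexistence-thm1}, so the relations become homogeneous), and put $\scrF := \{F_k\}_{k\ge 0}$. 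Then $\scrF$ is complete, and by Corollary \ref{completecor} the map $f$ factors through $\psif$ and extends, over $\xf \setminus V\big((1)_1,(f_1)_1,\ldots,(f_q)_1\big)$, to a morphism onto the projective completion $\bar Y$ of $Y$ cut out by the $f_i$-coordinates; this partial extension already sends $X_\infty$ into $\bar Y\setminus Y$ because $(1)_1$ vanishes on $X_\infty$. Homogenizing the relation for $\bar x_i$ and arguing exactly as in Theorem \ref{filtrexistence-thm1} shows $(\bar x_i)_1 \in \sqrt{\langle (1)_1,(f_1)_1,\ldots,(f_q)_1\rangle}$ away from $V\big((f^*c_{i,m_i})_1\big)$; since the $(\bar x_i)_1$ together with $(1)_1$ and the $(f_j)_1$ generate the irrelevant ideal $\kxf_+$, this forces $V\big((1)_1,(f_1)_1,\ldots,(f_q)_1\big) \subseteq \bigcup_i V\big((f^*c_{i,m_i})_1\big)$, a subset of $X_\infty$ lying over the proper closed set $W := \bigcup_i V(c_{i,m_i}) \subsetneq Y$. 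One checks (resolving the remaining indeterminacy of $\bar f$, or directly with the $\overline{H_i(y)}$) that $\bar f$ extends to a morphism whose bad locus — the part of $X_\infty$ mapped into $Y$ — lies over $W$; note that one need not, and in general cannot, have $W \subseteq \Sigma$ already.

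\emph{The iteration.} If $W \subseteq \Sigma$ we are done. Otherwise, for each component $W'$ of $W$ with $W' \not\subseteq \Sigma$, apply the theorem inductively (induction on $\dim X$, trivial for $\dim X = 0$) to the generically finite map $\finv(W') \to W'$ of strictly lower-dimensional affine varieties; lift the resulting complete filtrations from $\kk[\finv(W')]$ to $\kk[X]$ by choosing preimages of generators of their degree-one pieces, and replace $\scrF$ by its \emph{join} with these lifted filtrations, i.e.\ the filtration whose degree-one piece is the span of the union of the pieces. The join is again complete and its completion dominates all factors — stability of the class of completions coming from complete filtrations under joins and Segre-type embeddings is precisely what was used in Theorem \ref{filtrexistence-thm2} (see also \cite{sub1}). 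For the joined filtration the branches of $\xf$ at infinity over a component $W'$ are now routed, through the $\finv(W')$-part of the data, into a completion of $W'$ itself, hence into $\bar Y\setminus Y$; so the new bad locus lies over a closed subset of $Y$ with strictly fewer components outside $\Sigma$, and, after finitely many rounds, of strictly smaller dimension. By Noetherian induction the process terminates, producing the required $\scrF$.

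\emph{Main obstacle.} The delicate step is the iteration: lifting a filtration from the closed subvariety $\finv(W')\subseteq X$ and forming the join without destroying completeness, verifying that the completion of the join truly dominates both factors and restricts correctly on $X$ and on $\finv(W')$, and — hardest — checking that this genuinely removes $W'$ from the bad locus while creating no new bad components, so that the dimension induction is well-founded. Aligning this with the precise formulation in remark-definition \ref{preserve-map-defn}, and the reduction to the dominant case (keeping track of $\overline{f(X)}$ versus $Y$), is comparatively routine bookkeeping.
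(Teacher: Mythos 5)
Your proposal takes a genuinely different route from the paper, and it has gaps that I don't think can be repaired without essentially rewriting it to look like the paper's argument. The paper proves this theorem with a \emph{single, carefully designed} filtration and no Noetherian induction: it includes in $F_1$ \emph{all} monomials $y^\beta$ with $|\beta|\leq d_0$ (not just the ones occurring in the coefficient polynomials), includes $x_iy^\beta$ for $|\beta|\leq d_{i,1}$, and moreover inserts $x_i^k y^\beta$ for $|\beta|\leq d_{i,k}$ directly into $F_k$ for $1<k\leq k_0$ (these are \emph{not} in $F_1^k$ automatically; the degree comes out wrong). The point of putting in all monomials up to a given degree is that the resulting filtration is then invariant under linear changes of coordinates $\xi$ of the target $\aqk$, and this is exactly what the definition of ``preserves map $f$ at $\infty$'' demands: preservation of the intersection of the hypersurfaces $\{\xi_j\circ f = \xi_j(a)\}$ for generic $a$ and \emph{all} linear $\xi$. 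Your basic filtration, which spans only the monomials ``occurring in the $c_{i,j}$,'' is not $\xi$-invariant, and your argument never engages with the quantifier over $\xi$ at all --- you argue ``exactly as in Theorem \ref{filtrexistence-thm1},'' but that theorem treats a single fixed system of hypersurfaces, which is a strictly weaker task.

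The iteration step is the other serious problem. You assert that lifting the inductively obtained filtrations from $\kk[\finv(W')]$ to $\kk[X]$ and joining with $\scrF$ ``routes'' the bad branches at infinity over $W'$ into $\bar Y\setminus Y$, but nothing in the proposal verifies this: choosing preimages of degree-one generators in $\kk[X]$ is non-canonical, there is no reason the closure of $\finv(W')$ in the new $\xf$ is dominated by the completion constructed over $\finv(W')$, and joining can \emph{create} new points at infinity mapping into $Y$ over parts of $Y$ previously under control. Consequently the assertion that the bad locus loses components each round, and hence the Noetherian induction terminates, is unsupported. Moreover the target of the induction ($W\subseteq\Sigma$) is stronger than what the definition requires and may simply be unachievable; the paper only shows preservation over a fixed dense open $U = \{g\neq 0\}$, which suffices. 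You also rely on $\bar f$ being an honest morphism rather than a rational map on $\xf$, but after the basic construction you only get $\bar f$ defined away from $V((1)_1,(f_1)_1,\ldots,(f_q)_1)$ and leave the indeterminacy unresolved, so the containment $\bigcap_i\overline{H_i(y)}\subseteq\bar f^{-1}(y)$ used in your reduction is not established.
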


\begin{proof}
Choose a set of coordinates $x_1, \ldots, x_p$ (resp. $y_1, \ldots, y_q$) of $X$ (resp. $Y$). Since $f$ is generically finite, it follows that the coordinate ring of $X$ is algebraic over the pullback of the coordinate ring of $Y$. In particular, each $x_i$ satisfies a polynomial of the form: 
\begin{align}
\sum_{j=0}^{k_i}g_{i,j}(y)(x_i)^j &= 0 \label{app1-eqn2}
\end{align}
for some $k_i \geq 1$ and regular functions $g_{i,j}(y)$ on $Y$ such that $g_{i,k_i} \neq 0 \in \kk[Y]$. In abuse of notation, but for the sake of convenience, we implicitly identified in \eqref{app1-eqn2} variables $y_k$ with polynomials $f_k$ for each $k$. We continue to do so throughout this proof. Let $g_{i,j}(y) = \sum_{\alpha}c_{i,j,\alpha}y^{\alpha}$ be an arbitrary representation of $g_{i,j}$ in  $\kk[Y]$. For each $i,j$ with $1 \leq i \leq p$ and $0 \leq j \leq k_i$, let $d_{i,j} := \deg_y(\sum_{\alpha}c_{i,j,\alpha}y^{\alpha}) := \max\{|\alpha|: c_{i,j,\alpha} \neq 0\}$, where $\alpha = (\alpha_1, \ldots, \alpha_q) \in (\zz_+)^q$ and $|\alpha| := \alpha_1 + \cdots + \alpha_q$. Let $d_0 := \max \{d_{i,0} : 1 \leq i \leq p\}$ and $k_0 := \max \{k_i : 1 \leq i \leq p\}$. Define a filtration $\scrF := \{F_i: i\geq 0\}$ on $\kk[X]$ as follows:
\begin{eqnarray*}
	F_0 &:=& \kk, \\
	F_1 &:=& \kk\langle 1, x_1, \ldots, x_p, y_1, \ldots, y_q \rangle\ +\ \kk\langle y^{\beta}: |\beta| \leq d_0 \rangle\ + \\
		&  &\ +\ \kk\langle x_iy^{\beta}: |\beta| \leq d_{i,1}, 1 \leq i \leq p \rangle, \\
	F_k &:=& \left\{
		\begin{array}{l}
		\sum_{j=1}^{k-1} F_jF_{k-j} + \kk\langle (x_i)^ky^{\beta}: |\beta| \leq d_{i,k}, 1 \leq i \leq p \rangle\quad \textrm{if}\ 1<k \leq k_0,\\
		\sum_{j=1}^{k-1} F_jF_{k-j} \quad \textrm{if}\ k>k_i\ \forall i.
		\end{array}
		\right.
\end{eqnarray*}

Let $g := \prod_{i=1}^m g_{i,k_i}$ and $U :=  \{a \in Y: g(a) \neq 0\}$. Then $U$ is a non-empty Zariski open subset of $Y$. Let $\xi := (\xi_1, \ldots, \xi_q): \aqk \to \aqk$ be an arbitrary linear change of coordinates of $\aqk$. It suffices to show that $\psi_\scrF$ preserves the components of $\xi \circ f$ at $\infty$ over $\xi(a)$ for $a:=(a_1, \ldots, a_q) \in U$. For each $a \in Y$, let $H_j(a) := \{x \in X: (\xi_j \circ f)(x) = \xi_j(a)\}$ and let $\qqq_j(a)$ be the ideal of $H_j(a)$, i.e. the ideal of $\kk[X]$ generated by $\xi_j(y) - \xi_j(a)$. By lemma \ref{preservation-criterion}, $\psi_\scrF$ preserves the components of $\xi \circ f$ at infinity over $\xi(a)$ iff $\sqrt{\scrI(a)} = \profing_+$, where $\scrI(a)$ is the ideal of $\kxf$ generated by $\qqq_1^\scrF(a), \ldots, \qqq_n^\scrF(a)$ and $(1)_1$. Note the following:

\begin{compactenum}[(a)]
\labelformat{enumi}{(\theenumi)}
\item \label{linearly-complete-1} Since $\xi$ is a linear change of coordinate, so is $\xi^{-1}$. Therefore, for all $d \geq 0$, the $\kk$-span of $\{y^\beta: |\beta| \leq d\}$ in $\kk[Y]$ is equal to the $\kk$-span of $\{(\xi^{-1}(y))^\beta: \deg_y((\xi^{-1}(y))^\beta) \leq d \}$.
\item \label{linearly-complete-2} If we replace $f$ by $\xi \circ f$, and hence $y$ by $\xi(y)$, then $g_{i,j}(y)$ changes to $g^\xi_{i,j}(y) := g_{i,j}(\xi^{-1}(y)) = \sum_{\alpha} c_{i,j,\alpha} (\xi^{-1}(y))^{\alpha}$. But replacing $\sum_\alpha c_{i,j,\alpha} y^{\alpha}$ by $\sum_{\alpha} c_{i,j,\alpha} (\xi^{-1}(y))^{\alpha}$ does {\em not} change its degree $d_{i,j}$ in $y$.
\item \label{linearly-complete-3} Let $g^\xi := \prod_{i=1}^m g^\xi_{i,k_i}$. Then $g^\xi(\xi(a)) \neq 0$ iff $g(a) \neq 0$.
\end{compactenum}

In view of the latter observations and the construction of $\scrF$ it follows that $\scrF$ does not change if we replace $f$ by $\xi \circ f$. Moreover, the following two claims are equivalent due to properties \ref{linearly-complete-1}, \ref{linearly-complete-2} and \ref{linearly-complete-3} of the preceding paragraph.

\begin{compactenum}[(1)]
\labelformat{enumi}{(\theenumi)}
\item \label{cond-for-id} $\psi_\scrF$ preserves the components of $f$ at $\infty$ over $a \in U$, and
\item $\psi_\scrF$ preserves the components of $\xi \circ f$ at $\infty$ over $a \in \xi^{-1}(U)$.
\end{compactenum}

Therefore it suffices to prove \ref{cond-for-id} and we may without loss of generality assume $\xi$ to be the identity. Note that $\profing_+$ is generated as a $\kk$-algebra by elements $(1)_1$, $(x_1)_1, \ldots, (x_p)_1$, $(y_1)_1, \ldots, (y_q)_1$, the $(y^{\beta})_1$'s that appear in the definition of $F_1$, and all those $((x_i)^ky^\beta)_k$ that we inserted in the definition of all $F_k$'s. Therefore $\sqrt{\scrI(a)} = \profing_+$ iff some power of each of these generators lies in $\scrI(a)$.

\begin{prolemma} \label{powers-in-y-a}
Let $a$ be an arbitrary point in $Y$.
\begin{compactenum}
\item \label{powers-of-y-in-F1} Let $\beta \in (\zz_+)^q$ be such that $y^\beta \in F_1$. then
	\begin{compactenum}
	\item \label{powers-of-y-in-F1-p1} $(y-a)^\beta$ also lies in $F_1$, and
	\item \label{powers-of-y-in-F1-p2} $((y-a)^\beta)_1 \in \scrI(a)$.
	\end{compactenum}
\item \label{powers-of-x-in-Fk} Let $1 \leq i \leq p$. Pick $k$ with $1 \leq k \leq k_i$ and $\beta \in (\zz_+)^q$ such that $x_i^ky^\beta \in F_k$. Then 
	\begin{compactenum}
	\item \label{powers-of-x-in-Fk-p1} $x_i^k(y-a)^\beta$ lies in $F_k$, and
	\item \label{powers-of-x-in-Fk-p2} if in addition $\beta \neq 0$, then $(x_i^k(y-a)^\beta)_k \in \scrI(a)$.
	\end{compactenum}	 
\end{compactenum}
\end{prolemma}

\begin{proof}
\begin{asparaenum} 
\item Pick $\beta \in (\zz_+)^q$ such that $y^\beta \in F_1$. Expanding $(y-a)^\beta$ in powers of $y_1, \ldots, y_q$, we see that $(y-a)^\beta = \sum_{|\gamma| \leq |\beta|}c_\gamma y^\gamma$ for some $c_\gamma \in \kk$ and $\gamma \in (\zz_+)^q$. By construction, $F_1$ contains each of the $y^\gamma$ appearing in the preceding expression. It follows that $F_1$ also contains $(y-a)^\beta$, which proves assertion \ref{powers-of-y-in-F1-p1}. As for \ref{powers-of-y-in-F1-p2}, note that if $\beta = 0$, then $((y-a)^\beta)_1 = (1)_1 \in \scrI(a)$. Otherwise, there exists $j$, $1 \leq j \leq q$, such that the $j$-th coordinate of $\beta$ is positive. Then $(y-a)^\beta \in \qqq_j(a)$, and hence $((y-a)^\beta)_1 \in \qqq_j^\scrF(a) \subseteq \scrI(a)$, which completes the proof of assertion \ref{powers-of-y-in-F1}.

\item Let $1 \leq i \leq p$. Pick $k, \beta$ such that $1 \leq k \leq k_i$ and $x_i^ky^\beta \in F_k$. As in the proof of assertion \ref{powers-of-y-in-F1}, expanding $(y-a)^\beta$ in powers of $y_1, \ldots, y_q$, we see that $x_i^k(y-a)^\beta = \sum_{|\gamma| \leq |\beta|}c_\gamma x_i^k y^\gamma$ for some $c_\gamma \in \kk$. By construction, $F_k$ contains $x_i^ky^\gamma$ for each $|\gamma| \leq |\beta|$. It follows that $F_k$ also contains $x_i^k(y-a)^\beta$, and this proves assertion \ref{powers-of-x-in-Fk-p1}. For \ref{powers-of-x-in-Fk-p2}, note that if $\beta \neq 0$, then there exists $j$, $1 \leq j \leq q$, such that the $j$-th coordinate of $\beta$ is positive. Then $x_i^k(y-a)^\beta \in \qqq_j(a)$, and hence $(x_i^k(y-a)^\beta)_k \in \qqq_j^\scrF(a) \subseteq \scrI(a)$, which completes the proof of the lemma. \qedhere
\end{asparaenum}
\end{proof}

We now return to the proof of theorem \ref{filtrexistence-thm3}. Let $a$ be any point in $Y$ and $\beta$ be such that $y^\beta \in F_1$. Expanding $y^\beta$ in powers of $y_1 - a_1, \ldots, y_q - a_q$, we see that $y^\beta = \sum_{|\gamma| \leq |\beta|}c'_\gamma (y-a)^\gamma$ for some $c'_\gamma \in \kk$. By assertion \ref{powers-of-y-in-F1-p1} of lemma \ref{powers-in-y-a}, each $(y-a)^\gamma$ in the preceding expression lies in $F_1$. Therefore, in $\kxf$ element $(y^\beta)_1 = \sum_{|\gamma| \leq |\beta|}c'_\gamma ((y-a)^\gamma)_1$. But assertion \ref{powers-of-y-in-F1-p2} of lemma \ref{powers-in-y-a} implies that $((y-a)^\gamma)_1 \in \scrI(a)$ for all $|\gamma| \leq |\beta|$. Therefore $(y^\beta)_1 \in \scrI(a)$.

Now expand the polynomial of the left hand side of equation \eqref{app1-eqn2} (as a polynomial in $y := (y_1, \ldots, y_q)$) in powers of $y_1 - a_1, \ldots, y_q - a_q$. This leads to an equation of the form
\begin{align}\label{app1-eqn2'}
\sum_{j=0}^{k_i}g_{i,j}(a)x_i^j + \sum_{\underset{\beta \neq 0}{j \leq k_i}} h_{i,j,\beta}(a)(y-a)^\beta x_i^j &= 0\ , \tag{$\oldref{app1-eqn2}'$} 
\end{align}
where terms $(y-a)^\beta x_i^j$, for $\beta \neq 0$ and $j \leq k_i$, that appear in \eqref{app1-eqn2'} are such that $y^\beta x_i^j \in F_j$, and due to assertion \ref{powers-of-x-in-Fk-p1} of lemma \ref{powers-in-y-a}, $(y-a)^\beta x_i^j$ are in $F_j \subseteq F_{k_i}$. Therefore equation \eqref{app1-eqn2'} implies the following equality in $\kxf$:
\begin{align}\label{app1-eqn2''}
g_{i,k_i}(a)((x_i)_1)^{k_i} = - \sum_{j=0}^{k_i-1}g_{i,j}(a)((x_i)_1)^j((1)_1)^{k_i-j} \ - 
        	 \sum_{\underset{\beta \neq 0}{ j \leq k_i}} h_{i,j,\beta}(a) ((y-a)^\beta x_i^j)_{k_i}. \tag{$\oldref{app1-eqn2}''$}
\end{align}
Since $(1)_1 \in \scrI(a)$, each of the terms under the first summation of the right hand side of \eqref{app1-eqn2''} lies in $\scrI(a)$. Moreover, by assertion \ref{powers-of-x-in-Fk-p2} of lemma \ref{powers-in-y-a}, every $((y-a)^\beta x_i^j)_{k_i}$ under the second summation of the right hand side of \eqref{app1-eqn2''} also belongs to $\scrI(a)$. It follows that $g_{i,k_i}(a)((x_i)_1)^{k_i} \in \scrI(a)$, and therefore $(x_i)_1 \in \sqrt{\scrI(a)}$ if $g_{i,k_i}(a) \neq 0$. Hence for all $a \in U$ and $i$, $1 \leq i \leq p$, elements $(x_i)_1 \in \sqrt{\scrI(a)}$. 

Let $a \in U$ and $1 \leq i \leq p$. Pick $(x_i)^ky^\beta \in F_k$ with $1 \leq k \leq k_i$. Expanding $y^\beta$ in powers of $y_1 - a_1, \ldots, y_q - a_q$, we see that $(x_i)^ky^\beta = \sum_{|\gamma| \leq |\beta|}c'_\gamma (x_i)^k(y-a)^\gamma$ for some $c'_\gamma \in \kk$. By assertion \ref{powers-of-x-in-Fk-p1} of lemma \ref{powers-in-y-a}, each $(x_i)^k(y-a)^\gamma$ in the preceding expression lies in $F_k$. Therefore in $\kxf$ element $((x_i)^ky^\beta)_k = \sum_{|\gamma| \leq |\beta|}c'_\gamma ((x_i)^k(y-a)^\gamma)_k$. If $|\gamma| \leq |\beta|$ and $\gamma \neq 0$, assertion \ref{powers-of-x-in-Fk-p2} of lemma \ref{powers-in-y-a} implies that $((x_i)^k(y-a)^\gamma)_k \in \scrI(a)$ and if $\gamma = 0$, then $((x_i)^k(y-a)^\gamma)_k = ((x_i)^k)_k = ((x_i)_1)^k \in \sqrt{\scrI(a)}$ due to the conclusion of the preceding paragraph. Therefore for $x_i^ky^\beta \in F_k$, $1 \leq k \leq k_i$, it follows that $((x_i)^ky^\beta)_k \in \sqrt{\scrI(a)}$. 

Consequently $\sqrt{\scrI(a)} = \kxf_+$ for all $a \in U$, which completes the proof.
\end{proof}

\begin{example} \label{strong-preservation}
Let $X = Y = \affine{2}{\cc}$ and $f:=(f_1, f_2):X \to Y$ be the map defined by $f_1 := x_1^3 + x_1^2x_2 + x_1x_2^2 - cx_2$ and $f_2 := x_1^3 + x_1^2x_2 + x_1x_2^2 - x_2$ for any complex number $c \neq 0$ or $1$. Note that this map is a minor variation of the map considered in remark \ref{preserved-set}. Let $\psi: X \into \bar X \subseteq \pp^2(\cc) \times \pp^1(\cc) \times \pp^1(\cc)$ be the completion considered in theorem \ref{filtrexistence-thm2} and remark \ref{preserved-set}, i.e. $\psi(x_1, x_2) := ([1:x_1:x_2],[1:f_1(x_1, x_2)], [1: f_2(x_1, x_2)])$ for all $(x_1, x_2) \in X$. Below we show that $\psi$ does {\em not} satisfy the preservation property of theorem \ref{filtrexistence-thm3}. 

For each $\lambda := (\lambda_1, \lambda_2) \in \cc^2$ let 
$$f_{\lambda} := \lambda_1 f_1 + \lambda_2 f_2 =  (\lambda_1 + \lambda_2)x_1^3 + (\lambda_1 + \lambda_2)x_1^2x_2 + (\lambda_1+\lambda_2)x_1x_2^2 - (\lambda_1 c+\lambda_2)x_2\: .$$
Fix $\lambda^1, \lambda^2 \in \cc^2\setminus (L_1 \cup L_2 \cup L_3)$ where $L_1$ is the $x_1$-axis, $L_2$ is the $x_2$-axis and $L_3$ is the line $\{(x_1, x_2) \in \affine{2}{\cc}: x_1 + x_2 = 0\}$. Fix an $a := (a_1, a_2) \in Y$. Let $b_i := \lambda^i_1a_1 + \lambda^i_2a_2$, $i=1,2$. Every $H_i(a) = \{x \in X: f_{\lambda^i}(x) = b_i\}$ is as in theorem \ref{filtrexistence-thm3}, $i = 1,2$. Let $C_i(a)$ be the closure in $\pp^2(\cc)$ of $H_i(a)$ for $i = 1,2$. Then $P := [0:0:1] \in C_1(a) \cap C_2(a)$, where as in remark \ref{preserved-set}, we choose coordinates $[Z:X_1:X_2]$ on $\pp^2(\cc)$ and identify $X$ with $\pp^2(\cc) \setminus V(Z)$. Choose local coordinates $\xi_1 := X_1/X_2$ and $\xi_2 := Z/X_2$ on $\pp^2(\cc)$ near $P := [0:0:1]$. Equations of $f_{\lambda^i}(x_1,x_2) - b_i$ in $(\xi_1,\xi_2)$ coordinates are:
\begin{align*}
f_{\lambda^i}(x_1,x_2) - b_i &= (\lambda^i_1 + \lambda^i_2)\xi_1^3 + (\lambda^i_1 + \lambda^i_2)\xi_1^2 + (\lambda^i_1+\lambda^i_2)\xi_1 - (\lambda^i_1c + \lambda^i_2)\xi_2^2 - a_i\xi_2^3\ .
\end{align*}
Since $\lambda^i_1 + \lambda^i_2 \neq 0$, it follows (similarly to the implication \eqref{in-xi-coords} $\im$ \eqref{parametrizations} of remark \ref{preserved-set}) that for each $i$, $C_i(a)$ is smooth at $P$ and has a parametrization at $P$ of the form
$$\gamma_{i,a}(t) := [t:\frac{\lambda^i_1c + \lambda^i_2}{\lambda^i_1+\lambda^i_2} t^2 + \littleoh(t^3):1]\ .$$
In $(x_1, x_2)$ coordinates the parametrizations are: $(x_1(t), x_2(t)) = (\frac{\lambda^i_1c + \lambda^i_2}{\lambda^i_1+\lambda^i_2} t +\littleoh(t^2), 1/t)$ for $t \in \cc^*$. A straightforward calculation shows that for $t \in \cc^*$
\begin{align}
\begin{split} \label{f-lambda-i}
f_1(\gamma_{i,a}(t)) &= \frac{\lambda^i_2(1-c)}{\lambda^i_1+\lambda^i_2}\frac{1}{t} + \littleoh(t^2)\quad \text{and}\\
f_2(\gamma_{i,a}(t)) &= \frac{\lambda^i_1(c-1)}{\lambda^i_1+\lambda^i_2}\frac{1}{t} + \littleoh(t^2)\: ,
\end{split} 
\end{align}
$i = 1, 2$. By our choice of $c \neq 0$, $1$ and $\lambda^i$'s off $L_1 \cup L_2 \cup L_3$ it follows that the coefficients at $\frac{1}{t}$ in the expressions in \eqref{f-lambda-i} are non-zero. It follows that $\lim_{t \to 0} |f_j(\gamma_{i,a}(t))| = \infty$ for each $i,j$, and therefore  
\begin{align*}
\lim_{t \to 0} \psi(\gamma_{i,a}(t)) &=  \lim_{t \to 0}([1:\gamma_{i,a}(t)], [1:f_1(\gamma_{i,a}(t))], [1:f_2(\gamma_{i,a}(t))]) \\
					 						&= ([0:0:1], [0:1], [0:1])\ . 
\end{align*}
Hence $([0:0:1], [0:1], [0:1])$ is in the closure of $H_i(a)$ in $\bar X$ for $i= 1,2$ and every $a \in Y$. It follows that $\psi$ does not preserve $(f_{\lambda^1}, f_{\lambda^2})$ at $\infty$ over {\em any} point in $Y$, as claimed.

Finally, we follow the proof of theorem \ref{filtrexistence-thm3} to find a completion which preserves map $f$ at $\infty$. The algebraic equations satisfied by $x_1$ and $x_2$ over $\cc[f_1,f_2]$ are:
\begin{gather*}
x_1^3 +  \frac{1}{1-c}(f_1 - f_2) x_1^2 +  \frac{1}{(1-c)^2}(f_1 - f_2)^2 x_1 -  \frac{1}{1-c}(f_1 - cf_2) = 0,\ \text{and} \\
x_2 - \frac{1}{1-c}(f_1 - f_2) = 0.
\end{gather*}
In the notations of the proof of theorem \ref{filtrexistence-thm3}, $d_{1,2}= d_{1,0} = 1$, $d_{1,1} = 2$ and $d_{2,0} = 1$. Let $\scrF := \{F_d : d \geq 0\}$ be the filtration defined by:
$$F_0 := \kk,\: F_1 := \kk\langle 1, x_1, x_2, f_1, f_2, x_1f_1, x_1f_2, x_1f_1^2, x_1f_1f_2, x_1f_2^2 \rangle,\: F_d := (F_1)^d\: \text{for}\ d \geq 2\: .$$
Construction of the proof of $\ref{filtrexistence-thm3}$ yields that $\xf$ preserves map $f$ at $\infty$. Indeed, $\psi_\scrF(x) = [1: x_1: x_2: f_1(x): f_2(x): x_1f_1(x): x_1f_2(x): x_1f_1^2(x): x_1f_1(x)f_2(x): x_1f_2^2(x)] \in \pp^9(\cc)$ for $x \in \affine{2}{\cc}$ and therefore applying \eqref{f-lambda-i} it follows that $\lim_{t \to 0} \psi_\scrF(\gamma_{i,a}(t)) =$
\begin{align*}
& \lim_{t \to 0} [1:\left(\frac{\lambda^i_1c + \lambda^i_2}{\lambda^i_1+\lambda^i_2} t +\littleoh(t^2) \right): \frac{1}{t} : \left( \frac{\lambda^i_2(1-c)}{\lambda^i_1+\lambda^i_2}\frac{1}{t} + \littleoh(t^2) \right): \left( \frac{\lambda^i_1(c-1)}{\lambda^i_1+\lambda^i_2}\frac{1}{t} + \littleoh(t^2) \right): \littleoh(1): \\ 
		&\phantom{\lim_{t \to 0} [\quad} \littleoh(1) : \left( \frac{(1-c)^2(\lambda^i_2)^2(\lambda^i_1c + \lambda^i_2)}{(\lambda^i_1+\lambda^i_2)^3}\frac{1}{t} + \littleoh(t^2) \right) : \left( -\frac{(1-c)^2\lambda^i_1\lambda^i_2(\lambda^i_1c + \lambda^i_2)}{(\lambda^i_1+\lambda^i_2)^3}\frac{1}{t} + \littleoh(t^2) \right): \\
		&\phantom{\lim_{t \to 0} [\quad} \left( \frac{(1-c)^2(\lambda^i_1)^2(\lambda^i_1c + \lambda^i_2)}{(\lambda^i_1+\lambda^i_2)^3}\frac{1}{t} + \littleoh(t^2)\right) ] \\
		&= [0 : 0 : 1 : \frac{\lambda^i_2(1-c)}{\lambda^i_1+\lambda^i_2} : \frac{\lambda^i_1(c-1)}{\lambda^i_1+\lambda^i_2} : 0 : 0: \frac{(1-c)^2(\lambda^i_2)^2(\lambda^i_1c + \lambda^i_2)}{(\lambda^i_1+\lambda^i_2)^3} : \\
		&\phantom{[\quad} -\frac{(1-c)^2\lambda^i_1\lambda^i_2(\lambda^i_1c + \lambda^i_2)}{(\lambda^i_1+\lambda^i_2)^3} :  \frac{(1-c)^2(\lambda^i_1)^2(\lambda^i_1c + \lambda^i_2)}{(\lambda^i_1+\lambda^i_2)^3}] \ .
\end{align*}
If $\lambda^1$ and $\lambda^2$ are linearly independent, then it follows that limits $\lim_{t \to 0} \psi_\scrF(\gamma_{i,a}(t))$ are different for $i=1,2$, and therefore, completion $\psi_\scrF$ preserves $\{f_{\lambda^1}, f_{\lambda^2}\}$ at $\infty$ over {\em every} $a \in Y = \affine{2}{\cc}$.
\end{example} 
\section{General Bezout-type theorems} \label{sec-bezout}
\subsection{Bezout theorem for Semidegrees} \label{sec-semi-bezout}
Let $X$ be an $n$ dimensional affine variety. Let $\delta$ be a complete degree like function on the coordinate ring $A$ of $X$ with associated filtration $\scrF := \{F_d : d \geq 0\}$. Recall from example \ref{proj-example4} that there exists $d > 0$ such that $(\adelta)^{[d]} := \dsum_{k \geq 0}F_{kd}$ is generated by $F_d$ as a $\kk$-algebra and then the $d$-uple embedding embeds $\xdelta$ into $\pp^l(\kk)$, where $l := \dim_\kk F_d - 1$.

Below we will make use of a notion of multiplicity at an isolated point $b$ of fiber $\finv(a)$ of morphism $f:X \to \ank$, where $X$ is an affine variety and $a := (a_1, \ldots, a_n) \in \ank$. The latter multiplicity we define following the definition in \cite[Example 12.4.8]{fultersection} as the intersection multiplicity at $b$ of the effective Cartier divisors determined by regular (on $X$) functions $f_j - a_j$, $1 \leq j \leq n$.

\begin{thm}[see {\cite[Theroem 1.3]{announcement}} and {\cite[Theorem 3.1.1]{preprint}}]\label{thm-affine-bezout}
Let $X$, $A$, $\delta$, $d$ and $l$ be as above. Denote by $D$ the degree of $\xdelta$ in $\pp^l(\kk)$. Let $f = (f_1, \ldots, f_n):X \to \ank$ be any generically finite map. Then for all $a \in \ank$, 
\begin{align}\label{semi-bezout}
|\finv(a)| \leq \frac{D}{d^n}\prod_{i=1}^n\delta(f_i) \tag{A}
\end{align}
where $|\finv(a)|$ is the number of the isolated points in fiber $\finv(a)$ each counted with the multiplicity of $\finv(a)$ at the respective point. If in addition $\delta$ is a semidegree and $\psi_\delta$ preserves $\{f_1, \ldots, f_n\}$ at $\infty$ over $a$, then \mathref{semi-bezout} holds with an equality.
\end{thm}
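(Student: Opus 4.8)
The plan is to reduce the affine statement to classical projective Bezout on $\pp^l(\kk)$. Embed $\xdelta \hookrightarrow \pp^l(\kk)$ via the $d$-uple embedding associated to $F_d$, and let $X_\infty := \xdelta \setminus X$ be the hyperplane-at-infinity section $V((1)_1)$ (Theorem~\ref{completethm}). For each $i$, the function $f_i - a_i$ is regular on $X$ with $\delta(f_i - a_i) \le \delta(f_i) =: d_i$ (using that $\delta(a_i) = 0$ for scalars and that $\delta$ is a degree-like function), so $(f_i - a_i)_{d_i}$ is a section of $\sheaf_{\xdelta}(d_i)$; equivalently, after the $d$-uple re-embedding, the closure $\overline{H_i(a)}$ of the hypersurface $H_i(a) = \{f_i = a_i\}$ in $\pp^l(\kk)$ is cut out (set-theoretically) by a hypersurface of degree $\lceil d_i/d\rceil$, or more precisely the divisor class of $\overline{H_i(a)}$ on $\xdelta$ is that of $\sheaf(d_i)$ restricted to $\xdelta$, whose degree against the fundamental class is $D d_i / d^{\,n}$ since $\deg \xdelta = D$ in the $F_d$-embedding.

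**Key steps.** First I would set up the divisors: on $\xdelta$, for each $i$ let $E_i$ be the effective Cartier divisor determined by the regular function $f_i - a_i$ together with the grading, so that $E_i \sim \frac{d_i}{d}\, H$ where $H$ is the hyperplane class of the $F_d$-embedding (this is the content of Lemma~\ref{preservation-criterion}(\ref{homogeneous-closure}) applied to $\qqq = \langle f_i - a_i\rangle$, identifying $\qqq^\scrF$ with the homogenization). Second, by refined Bezout / the projection formula on the complete variety $\xdelta$ (\cite[Example~12.3.1, Prop.~8.2]{fultersection}), the intersection product $E_1 \cdots E_n$ has total degree $\prod_i (d_i/d) \cdot \deg_H \xdelta = \frac{D}{d^n}\prod_i d_i$, and this number bounds from above the sum of the local intersection multiplicities of $E_1, \ldots, E_n$ at the isolated points of $E_1 \cap \cdots \cap E_n$ — a superset of $\finv(a)$. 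Since the local multiplicity of $\finv(a)$ at $b$ was defined in exactly this way (as the intersection multiplicity of the Cartier divisors $f_j - a_j$ at $b$, following \cite[Example~12.4.8]{fultersection}), and since such intersection numbers are non-negative with the excess contributions (points at infinity, and any positive-dimensional excess components) also contributing non-negatively, we get inequality~\mathref{semi-bezout}.

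**The equality case.** Now suppose $\delta$ is a semidegree and $\psi_\delta$ preserves $\{f_1, \ldots, f_n\}$ at $\infty$ over $a$. Preservation means $\overline{H_1(a)} \cap \cdots \cap \overline{H_n(a)} \cap X_\infty = \emptyset$ (Lemma~\ref{preservation-criterion}), so $E_1 \cap \cdots \cap E_n$ is contained in $X$, hence is a finite set equal to $\finv(a)$ with no contribution at infinity. What remains is to show there is no \emph{excess} among these points, i.e. that the global intersection number equals the sum of the proper local multiplicities with no surplus — equivalently, that each $E_i$ meets the others properly and that the divisors behave like a regular sequence near each point of $\finv(a)$. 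Here is where the semidegree hypothesis does the work: a semidegree is a degree-like function whose associated graded ring $\gr^\delta A$ is an integral domain (indeed $\delta$ extends to a valuation-like object on the function field up to the leading term), which forces $\xdelta$ to be a variety whose hyperplane section $X_\infty$ is irreducible (or at least has no component that could absorb excess intersection), so that $\sheaf_{\xdelta}(d_i)$ restricted away from $X_\infty$ is still ``positive enough'' and the $E_i$ have no common components — this is the point I expect to be the main obstacle, since it requires unwinding precisely what ``semidegree'' buys us about the geometry of $\xdelta$ at infinity and invoking that the intersection cycle $E_1 \cdots E_n$, being supported on the finite set $\finv(a) \subseteq X$, equals $\sum_{b} i(b; E_1 \cdots E_n)[b]$ with each $i(b;\,\cdot\,)$ agreeing with the defined multiplicity of $\finv(a)$ at $b$.

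**Main difficulty.** The inequality is essentially bookkeeping with Bezout on a (possibly singular) complete variety, so the non-trivial content is entirely in the equality statement: ruling out hidden excess intersection at the finite points of $\finv(a)$. I would handle this by passing to a common resolution or normalization of $\xdelta$ if needed, but I anticipate that for a semidegree one can argue more directly — the associated graded being a domain means $\xdelta$ is the closure of $X$ in $\pp^l$ with $X_\infty$ a prime divisor, and then the standard fact that for an effective divisor $D_0$ on a variety with $D_0 \cdot (\text{something ample})^{n-1}$ computing a degree, the absence of intersection at $D_0$'s support forces the remaining intersection to be transverse-in-the-cycle-sense, gives equality. Making this last step airtight — reconciling Fulton's intersection-product degree with the sum of the ad hoc pointwise multiplicities under the preservation hypothesis — is the crux.
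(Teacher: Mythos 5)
Your treatment of the inequality is essentially in the same spirit as the paper's proof, modulo one bookkeeping device the paper employs that you gloss over: the paper works with the $d$-th powers $(f_i - a_i)^d$ rather than $f_i - a_i$ itself, so that the associated leading forms $G_i := ((f_i - a_i)^d)_{dd_i}$ live in degree $dd_i$ of $\adelta$ and hence in integer degree $d_i$ of $(\adelta)^{[d]}$. This sidesteps the ``$\lceil d_i/d \rceil$'' awkwardness you flag, produces the $d^n$ in the denominator cleanly (multiplicities of Cartier divisors scale multilinearly under powering), and then classical projective Bezout on $\pp^l(\kk)$ for the hypersurfaces $\hat G_i$ of degree $d_i$ gives the bound $D\prod_i d_i$ directly. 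So for the inequality your proposal is fine.

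The equality case, however, has a genuine gap, and it is not the one you anticipate. You locate the difficulty in ``reconciling Fulton's intersection-product degree with the sum of the ad hoc pointwise multiplicities'' at the finite points of $\finv(a)$; but that reconciliation is automatic, because the paper's definition of the multiplicity of $\finv(a)$ at $b$ \emph{is} precisely the intersection multiplicity of the Cartier divisors of $f_j - a_j$ at $b$, which coincides on the affine chart $X$ with what Bezout computes for the $\hat G_i$'s (up to the $d^n$ scaling). The real issue you miss is that preservation at $\infty$ controls $\overline{H_1(a)} \cap \cdots \cap \overline{H_n(a)} \cap X_\infty$, i.e.\ the intersection of the \emph{closures} $V(\qqq_i^\scrF)$, whereas Bezout is counting the intersection of the hypersurfaces $V(G_i)$ cut out by the single leading forms. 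In general $\qqq_i^\scrF \supseteq \langle G_i \rangle$ but is \emph{not} principal, so $V(G_i)$ may strictly contain $\overline{H_i(a)}$, and Bezout can pick up extra intersection points at infinity that preservation does not see — making the inequality strict. The semidegree hypothesis enters exactly to close this gap, but not via irreducibility of $X_\infty$ or positivity of $\sheaf_{\xdelta}(d_i)$ as you speculate: the paper invokes Lemma~\ref{semi-principal}, which states that for a semidegree the homogeneous ideal $\qqq^\scrF$ of a principal ideal $\qqq = \langle g \rangle$ is itself principal, generated by the leading form $(g)_{\delta(g)}$. Hence $V(G_i) = V(\aaa_i^\delta) = \overline{H_i(a)}$, the Bezout intersection equals $\bigcap_i \overline{H_i(a)}$ exactly, and preservation then forces it into $X$ where it equals $\finv(a)$, so all points are isolated and classical Bezout gives equality. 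Without identifying this principality statement, your argument cannot rule out phantom contributions at infinity, and the equality claim is unproved.
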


\begin{proof}
Let $a = (a_1, \ldots, a_n) \in \ank$. For each $i$, let $\aaa_i$ be the ideal generated by $(f_i - a_i)^d$ in $A$ and $\aaa_i^\delta := \bigoplus_{j \geq 0} (\langle (f_i - a_i )^d\rangle \cap  F_j)$ be the corresponding {\em homogeneous} ideal in $\adelta$. Clearly $G_i := ((f_i - a_i)^d)_{dd_i} \in \aaa_i$, where $d_i := \delta(f_i)$ for each $i = 1, \ldots, n$. It follows that
\begin{align}
\bigcap_{i=1}^n \{x \in X: (f_i(x) - a_i )^d = 0\} 	&\subseteq \bigcap_{i=1}^n \{x \in \xdelta:  G(x) = 0\ \text{for all}\ G \in \aaa_i^\delta\} \label{closure-f_i-d_i} \\
													&\subseteq \bigcap_{i=1}^n \{x \in \xdelta:  G_i(x) = 0\} \label{hypersurface-f_i-d_i}
\end{align}
where the first inclusion is due to lemma \ref{preservation-criterion}, since the closure of the hypersurface $V(\aaa_i)$ of $X$ in $\xdelta$ is $V(\aaa_i^\delta)$. Note that the sum of the multiplicities of the intersections of Cartier divisors determined by $(f_i - a_i)^d$ at the isolated points in the set on the left hand side of \eqref{closure-f_i-d_i} is precisely $d^n$ times the sum of the multiplicities of fiber $\finv(a)$ at the isolated points in $\finv(a)$. 

Pick a set of homogeneous coordinates $[y_0:\cdots: y_L]$ of $\pp^l(\kk)$. The $d$-uple embedding of $\xdelta$ into $\pp^l(\kk)$ induces a surjective homomorphism $\phi: \kk[y_0, \ldots, y_L] \to (\adelta)^{[d]}$ of graded $\kk$-algebras. For each $i$, choose an arbitrary homogeneous polynomial $\hat G_i \in \kk[y_0, \ldots, y_L]$ of degree $d_i$ such that $\phi(\hat G_i) = G_i$. According to the classical Bezout theorem on $\pp^l(\kk)$, the sum of the multiplicities of isolated points of $\xdelta \cap V(\hat G_1) \cap \cdots \cap V(\hat G_n)$ in $\pp^l(\kk)$ is at most $Dd_1 \cdots d_n$, and it is equal to $Dd_1 \cdots d_n$ if all the points in the intersection are isolated.

Since $\xdelta \cap V(\hat G_1) \cap \cdots \cap V(\hat G_n)$ is precisely $\xdelta \cap V(G_1) \cap \cdots \cap V(G_n)$, inequality \eqref{semi-bezout} follows from \eqref{hypersurface-f_i-d_i} and the conclusions of the two preceding paragraphs. The last assertion of theorem \ref{thm-affine-bezout} follows from the following observations: 

\begin{compactenum}
\item if $\delta$ is a semidegree, then according to lemma \ref{semi-principal} ideal $\aaa_i^\delta$ is generated by $G_i$ and hence $\subseteq$ in \eqref{hypersurface-f_i-d_i} can be replaced by $=$, and
\item completion $\psi_\delta$ preserves $\{f_1, \ldots, f_n\}$ at $\infty$ over $a$ iff the $\subseteq$ in \eqref{closure-f_i-d_i} is in fact an equality. \qedhere
\end{compactenum}
\end{proof}

\begin{rem}
Both assertions of theorem \ref{thm-affine-bezout} are valid for $\delta = \max_{j=1}^N \delta_j$ being a subdegree with $\delta_1(f_i) = \cdots = \delta_N(f_i)$ for all $i=1, \ldots, n$.
\end{rem}

\begin{rem}\label{semi-preserving-set}
Let $f := (f_1, \ldots, f_n):X \to \ank$ be a quasifinite map and $\delta$ be a complete semidegree on $A := \kk[X]$. We claim that if $\psi_\delta$ preserves $\{f_1, \ldots, f_n\}$ at $\infty$ over any point, then $\psi_\delta$ preserves $\{f_1, \ldots, f_n\}$ at $\infty$ over {\em all} points. Indeed, let $a := (a_1, \ldots, a_n) \in \ank$. For each $i$, let $\ppp_i(a)$ be the ideal generated by $(f_i- a_i)$ in $A$, and let $\ppp_i^\delta(a) := \bigoplus_{j \geq 0} (\ppp_i(a) \cap  F_j) \subseteq \adelta$. Due to lemma \ref{preservation-criterion}, $\psi_\delta$ preserves $\{f_1, \ldots, f_n\}$ at $\infty$ over $a$ iff $\sqrt {\scrI(a)} = \adelta_+$, where $\scrI(a) := \langle \ppp_1^\delta(a), \ldots, \ppp_n^\delta(a), (1)_1 \rangle \subseteq \adelta$. According to lemma \ref{semi-principal}, $\ppp_i^\delta(a) = \langle (f_i - a_i)_{d_i} \rangle$, where $d_i := \delta(f_i)$ for each $i$. Then $\scrI(a) = \langle (f_1 - a_1)_{d_1}, \ldots, (f_n - a_n)_{d_n}, (1)_1 \rangle = \langle (f_1)_{d_1}, \ldots, (f_n)_{d_n}, (1)_1 \rangle$. Since the latter expression is independent of $a_i$'s, the claim follows. Note that we have shown (in the notation of remark \ref{preserved-set}) that either $S_{\psi_\delta} = \emptyset$ or $S_{\psi_\delta} = f(X)$.
\end{rem}

\begin{example}[Weighted Bezout theorem, cf. \cite{damon}, {\cite[Example 7]{announcement}}, {\cite[Example 3.1.4]{preprint}}] \label{example-weighted-bezout}
Let $X := \ank$ and $\delta$ be a weighted degree on $A := \kk[x_1, \ldots, x_n]$ which assigns weights $d_i > 0$ to $x_i$, $1 \leq i \leq n$. Then $(1)_1, (x_1)_{d_1}, \ldots, (x_n)_{d_n}$ is a set of $\kk$-algebra generators of $\adelta$. A straightforward application of lemma \ref{preservation-criterion} implies that $\psi_\delta$ preserves at $\infty$ all components of the identity map $\identity$ of $\ank$ over $0$. Therefore theorem \ref{thm-affine-bezout} with $f = \identity$ and $d$ as in the preamble to theorem \ref{thm-affine-bezout} implies that $1 = \frac{D}{d^n}\prod_{i=1}^n d_i$ and therefore $D = \frac{d^n}{\prod_{i=1}^n d_i}$. Consequently, theorem \ref{thm-affine-bezout} implies for any $f$ that:
\begin{align}
|\finv(a)| &\leq \prod_{i=1}^n\frac{\delta(f_i)}{d_i}\ . \label{weighted-bezout}
\end{align}
Recall (example \ref{weighted-semi-example}) that $\gr \adelta \cong \kk[x_1, \ldots, x_n]$ via an identification of $[(g)_{\delta(g)}] \in \gr \adelta$ and $\ld_\delta(g) \in \kk[x_1, \ldots, x_n]$, for $0 \neq g \in A$. Let $a := (a_1, \ldots, a_n) \in \ank$ and $\aaa_i := \langle f_i - a_i \rangle $ (cf. the proof of theorem \ref{thm-affine-bezout}). According to lemma \ref{semi-principal}, ideals $\aaa_i^\delta$ are generated by $(f_i - a_i)_{\delta(f_i)} = (f_i)_{\delta(f_i)} - a_i((1)_1)^{\delta(f_i)}$ for $1 \leq i \leq n$. Therefore, 
\begin{align*}
\text{\eqref{weighted-bezout} holds with equality} & \iff \text{$\psi_\delta$ preserves $\{f_1, \ldots, f_n\}$ at $\infty$ over $a$} \\
												 & \iff V(\aaa_1^\delta, \ldots, \aaa_n^\delta, (1)_1) = \emptyset \subseteq \proj \adelta \\
												 & \iff V((f_1)_{\delta(f_1)}, \ldots, (f_n)_{\delta(f_n)}, (1)_1) = \emptyset \subseteq  \proj \adelta \\
												 & \iff V([(f_1)_{\delta(f_1)}], \ldots, [(f_n)_{\delta(f_n)}]) = \emptyset \subseteq  \proj \gr \adelta \\
												 & \iff V(\ld_\delta(f_1), \ldots, \ld_\delta(f_n)) = \{0\} \in \ank,
\end{align*}
where the last relation holds since the maximal ideal of the origin in $\ank$ corresponds to the irrelevant ideal $\dsum_{k > 0}F_k/F_{k-1}$ of $\gr\adelta$ via the isomorphism $\gr\adelta \cong \kk[x_1, \ldots, x_n]$. Finally note that formula \eqref{weighted-bezout} in combination with condition $V(\ld_\delta(f_1), \ldots, \ld_\delta(f_n)) = \{0\} \in \ank$ for equality in \eqref{weighted-bezout} constitute the content of the {\em Weighted Bezout theorem} stated in section \ref{subsec-background}. 
\end{example}

Applying theorem \ref{khothm}, number $D$ of theorem \ref{thm-affine-bezout} admits a geometric description as the volume of a convex body associated to $\delta$ and a $\zz^n$ valued (surjective) valuation $\nu$ of $\kk(X)$ (cf. section \ref{subsec-bezoutground}), namely:

\begin{prop}[see {\cite[Proposition 1.4]{announcement}} and {\cite[Theorem 3.1.3]{preprint}}]\label{propvanskii}
Let $X$, $A$, $d$, $\delta$, $l$ and $D$ be as in theorem \ref{thm-affine-bezout} and $\nu$ be a valuation on $A$ with values in $\zz^n$.
Let $C$ be the smallest closed cone in $\rr^{n+1}$ containing
$$G := \{(\frac{1}{d}\delta(f),\nu(f)): f \in A\} \cup \{(1,0, \ldots, 0)\}.$$ 
Let $\Delta$ be the convex hull of the cross-section of $C$ with the first coordinate having value $1$. Then $D = n!\vol_n(\Delta)$, where $\vol_n$ is the $n$-dimensional Euclidean volume.
\end{prop}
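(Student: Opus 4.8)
The plan is to realize $D$ as the degree of a graded algebra and then to identify the resulting Okounkov body with $\Delta$. By the choice of $d$, the Veronese subalgebra $R := (\adelta)^{[d]} = \dsum_{k \geq 0} F_{kd}$ is generated as a $\kk$-algebra by its degree-one part $R_1 = F_d$; hence $R$ is a finitely generated graded domain with $\proj R = \xdelta$, and the closed embedding $\xdelta \into \pp^l(\kk)$ cut out by $R_1$ is precisely the $d$-uple embedding of the statement. Thus $D = \deg\bigl(\proj R \subseteq \pp^l(\kk)\bigr)$, and since $\xdelta$ is an $n$-dimensional variety this degree is $n!$ times the leading coefficient of the Hilbert polynomial $k \mapsto \dim_\kk R_k = \dim_\kk F_{kd}$; that is,
\[
 D = \lim_{k\to\infty}\frac{n!}{k^n}\,\dim_\kk F_{kd}.
\]

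Next I would bring in the valuation $\nu$ (which, as in the remark preceding the statement, we take to be surjective onto $\zz^n$). Since $\kk(X)$ has transcendence degree $n$ over $\kk$ and $\kk$ is algebraically closed, Abhyankar's inequality forces the residue field of $\nu$ to equal $\kk$; consequently $\nu$ has one-dimensional leaves, and $\dim_\kk V = \#\,\nu(V \setminus \{0\})$ for every finite-dimensional $\kk$-subspace $V \subseteq A$. Put $\Gamma_k := \nu(F_{kd}\setminus\{0\}) \subseteq \zz^n$ and $\Gamma := \bigcup_{k\geq 0}\{k\}\times\Gamma_k \subseteq \zz_{\geq 0}\times\zz^n$. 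Then $\Gamma$ is a semigroup, since $\nu(fg) = \nu(f)+\nu(g)$ and $F_{kd}F_{k'd}\subseteq F_{(k+k')d}$; one has $\dim_\kk F_{kd} = \#\Gamma_k$ by the preceding sentence; and $\Gamma$ generates $\zz^{n+1}$ as a group, because it contains $(1,0)$ (take $1\in F_d$) and — as $\scrF$ is complete and $\nu$ surjective — the differences $\nu(f)-\nu(g)$ with $f,g \in \bigcup_k F_{kd} = A$ run over all of $\zz^n$. Applying theorem \ref{khothm} to $R$ together with $\nu$ (equivalently, to the semigroup $\Gamma$) now gives
\[
 D = \lim_{k\to\infty}\frac{n!}{k^n}\#\Gamma_k = n!\,\vol_n(\Delta_\nu), \qquad \Delta_\nu := \{\, v\in\rr^n : (1,v)\in C_\Gamma \,\},
\]
where $C_\Gamma$ is the smallest closed convex cone in $\rr^{n+1}$ containing $\Gamma$.

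It remains to identify $\Delta_\nu$ with $\Delta$, for which it suffices to prove $C_\Gamma = C$. The inclusion $C_\Gamma \subseteq C$ is clear: for $f \in F_{kd}\setminus\{0\}$ one has $\delta(f)\leq kd$, so
\[
 (k,\nu(f)) = \bigl(\tfrac1d\delta(f),\nu(f)\bigr) + \bigl(k-\tfrac1d\delta(f)\bigr)(1,0,\dots,0)
\]
is a non-negative combination of elements of $G$. For $C \subseteq C_\Gamma$ it is enough to check that the generators of $C$ lie in $C_\Gamma$: the point $(1,0,\dots,0)$ does, and for $f \neq 0$, writing $e := \delta(f)$ and using $\delta(f^k)\leq ke$ we get $f^k \in F_{ke}\subseteq F_{d\lceil ke/d\rceil}$, so $(\lceil ke/d\rceil,\,k\nu(f)) \in \Gamma$; dividing by $k$ and letting $k\to\infty$ shows $\bigl(\tfrac1d\delta(f),\nu(f)\bigr) \in C_\Gamma$. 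Hence $C_\Gamma = C$, so $\Delta_\nu = \Delta$ and $D = n!\,\vol_n(\Delta)$.

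The step I expect to require the most care is the application of theorem \ref{khothm}: one must verify that $\Gamma$ (equivalently $R$ with $\nu$) satisfies its hypotheses — that $\Gamma_k$ is nonempty for large $k$, that $\#\Gamma_k$ grows polynomially of degree exactly $n$, that $\Delta_\nu$ is compact, and that $\Gamma$ spans the full lattice $\zz^{n+1}$ so that no index factor appears — all of which follow from $\dim\xdelta = n$ and the surjectivity (hence maximal rank $n$) of $\nu$. The other ingredients, namely the Hilbert-polynomial description of $D$, the one-dimensional-leaves fact, and the cone computation $C_\Gamma = C$, are routine.
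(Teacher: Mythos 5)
Your proposal is correct and follows essentially the same route as the paper: both boil the statement down to the semigroup-counting form of theorem \ref{khothm} applied to the Veronese subalgebra $(\adelta)^{[d]}$ with $L^k = F_{kd}$, both use surjectivity of $\nu$ to see that the index factor (the paper's $s(L)$, your lattice-spanning condition) is trivial, and both finish by matching the cone $C$ of the statement with the cone built from the $\nu$-images of the $F_{kd}$. The only cosmetic difference is that the paper identifies $D$ with the intersection number $[L,\ldots,L]$ and uses that the mapping $\Phi_L$ has degree $1$, whereas you pass through the Hilbert-polynomial description $D = \lim n!\,k^{-n}\dim_\kk F_{kd}$ together with the one-dimensional-leaves fact $\dim_\kk F_{kd} = \#\nu(F_{kd}\setminus\{0\})$; these are interchangeable once one accepts theorem \ref{khothm}, so this is not a genuinely different argument.
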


\begin{proof}
Consider $L := F_d = \{f \in A: \delta(f) \leq d\}$. As in section \ref{subsec-bezoutground}, we introduce $S(L) := \{(k, \nu(f)): f \in L^k,\ k \geq 0\} \subseteq \nn \dsum \zz^n$. Let $C(L)$ be the closure of the convex hull of $S(L)$ in $\rr^{n+1}$ and $\Delta(L) := C(L) \cap (\{1\} \times \rr^n)$. Mapping $\Phi_L$ of theorem \ref{khothm} is precisely the embedding $X \subseteq \xdelta \into \pp^l(\kk)$, so that the mapping degree $d(L)$ of $\Phi_L$ is $1$. Therefore theorem \ref{khothm} implies that $[L, \ldots, L] = \frac{n!}{s(L)}\vol_n(\Delta(L))$, where $s(L)$ is the index in $\zz^n$ of the subgroup $S'(L)$ generated by all the differences $\alpha - \beta$ such that $(k, \alpha), (k, \beta) \in S(L)$ for some $k \geq 1$. Note also that:
\begin{compactenum}
\item \label{D-is-LLLL} according to its definition $[L,\ldots, L]$ is equal to the degree $D$ of $\xdelta$ in $\pp^L(\cc)$,
\item \label{F_d-generates-all} $F_d$ generates $(\adelta)^{[d]}$, so that for $L^k = (F_d)^k = F_{kd}$ for all $k \geq 1$, and therefore for all $f \in A$, $(k, \nu(f)) \in S(L)$ for all $k \geq \frac{\delta(f)}{d}$, and
\item \label{1-in-all} for all $k \geq 1$, $1 \in L^k$ and therefore $(k, \nu(1)) =  (k,0,\ldots, 0) \in S(L)$. 
\end{compactenum}
Properties \ref{F_d-generates-all} and \ref{1-in-all} imply that $S'(L) \supseteq \{\nu(f): f \in A\}$. Since $\nu$ is surjective, it follows that $S'(L) = \zz^n$ and hence $s(L) = 1$. But then $D = n!\vol_n(\Delta(L))$ due to property \ref{D-is-LLLL}. Finally, properties \ref{F_d-generates-all} and \ref{1-in-all} also imply that $C = C(L)$, and consequently that $\Delta = \Delta(L)$.
\end{proof}

%

\begin{example}
Let $X = \ank$ and $\delta$ be as in example \ref{example-weighted-bezout}. Let $\nu$ be any monomial valuation on $A :=\kk[x_1, \ldots, x_n]$, e.g. the one that assigns to $\sum a_\alpha x^\alpha \in A\setminus \{0\}$ the lexicographically (coordinatewise) minimal exponent $\alpha$ among all $\alpha := (\alpha_1, \ldots, \alpha_n)$ with $a_\alpha \neq 0$. With $d$ being any common multiple of $d_1, \ldots, d_n$, it is straightforward to see that $\Delta = \{(1,x) \in \rr_+^{n+1}: \sum_{i=1}^n x_id_i \leq d\}$ and therefore $\vol_n(\Delta) = \frac{1}{n!}\prod_{i=1}^n\frac{d}{d_i}$. It follows that $D = n!\vol_n(\Delta) = \prod_{i=1}^n\frac{d}{d_i}$, which is, of course, the value we have calculated in example \ref{example-weighted-bezout}.
\end{example}
\subsection{Bezout theorem for Subdegrees}\label{sec-sub-bezout}
\newcommand{\adeltaa}[1]{A^{\delta_{#1}}}
\newcommand{\adeltaaa}[2]{(A^{\delta_{#1}})^{[#2]}}
\newcommand{\xdeltaa}[1]{X^{\delta_{#1}}}

\newcommand{\divclosure}[1][]{%
  	\ensuremath{{\overline \Div}^{#1}}%
}
\newcommand{\divclosuredelta}[1][]{%
  	\divclosure[\delta_{#1}]%
}
\newcommand{\divdelta}[1][]{%
  	\Div_{X^{\delta_{#1}}}%
}
\newcommand\divinfinitydeltaa[2]{%
	D^{\delta_{#1}}_{d_{#2}, \infty}%
}
\newcommand\divinfinitydelta[1]{%
	\divinfinitydeltaa{#1}{#1}%
}

Assume $f := (f_1, \ldots, f_n):X \to \ank$ is a dominating morphism of affine varieties with generically finite fibers and $\delta := \max\{\delta_j: 1 \leq j \leq N\}$ is a \complete subdegree on $A := \kk[X]$, i.e. $\delta$ is non-negative, finitely generated and $\delta^{-1}(0) = \kk \setminus \{0\}$. Assume $\delta_j(f_i) > 0$ for each $i,j$. In the spirit of theorem \ref{thm-affine-bezout} we derive in this section an upper bound for the number of points in a generic fiber of $f$ in $X$ (counted with multiplicity) in terms of the degree of a projective completion of $X$.

\begin{defn*}
\mbox{}
\begin{compactitem}
\item Let $g \in A$ and $\Div_X(g)$ be the principal Cartier divisor corresponding to $g$ on $X$. Assume that the corresponding Weil divisor is $[\Div_X(g)] = \sum r_i[V_i]$. Given a completion $X \into Y$ of $X$, we write $[\divclosure[Y]_X(g)]$ for the Weil divisor on $Y$ given by:
$$[\divclosure[Y]_X(g)] := \sum r_i [\overline V_i],$$
where $\overline V_i$ is the closure of $V_i$ on $Y$. If $Y = \xdelta$ for some degree like function $\delta$ on $A$, then we also make use of notation $\divclosuredelta_X(g)$ for $\divclosure[Y]_X(g)$.
\item If $g \in A$ is such that $\delta_j(g) > 0$ for all $j= 1, \ldots, N$, then 
$$\delta_g := e_g\cdot\max\{\frac{\delta_j}{\delta_j(g)}: 1 \leq j \leq N\},$$ 
where $e_g$ is a suitable integer to ensure that $\delta_g$ is integer valued (e.g. one can take $e_g := \prod_{j=1}^N \delta_j(g)$).
\end{compactitem}
\end{defn*}

Let the filtration corresponding to $\delta$ be $\scrF := \{F_d: d \geq 0\}$. Identify $\adelta$ with $\sum_{d \in \zz}F_dt^d$. Recall that $\xdelta := \proj \adelta$ is the union of affine charts of the form $\spec \adelta_{(gt^d)}$, where $d > 0$ and $\adelta_{(gt^d)}$ is the subring of elements of degree zero of the localizations $\adelta_{gt^d}$. Say $U_0, \ldots, U_m$ is an open cover of $\xdelta$ with $U_j := \spec \adelta_{(g_jt^{l_j})}$ for some $l_j \geq 1$ and $g \in F_{l_j}$ for every $j$. Moreover, assume that $g_0 = 1$ and $l_0 = 1$, so that $U_0 = \spec \adelta_{(t)} = \spec A$. Let $d$ be a common multiple of $l_1, \ldots, l_m$. Then for each $j$, $h_j := \frac{t^d}{(g_j)^{d/l_j}t^d}$ is a regular function on $U_j$ and $h_j/h_k$ is a unit on $U_j \cap U_k$. Therefore collection $\{(h_j,U_j)\}_j$ defines an {\em effective} Cartier divisor $D^\delta_{d, \infty}$ on $\xdelta$, which we call the {\em $d$-uple divisor at infinity}. Its associated Weil divisor is
$$[D^\delta_{d, \infty}] := \sum_{j=1}^N \ord_j(D^\delta_{d, \infty})[V_j],$$
where $V_1, \ldots, V_N$ are the irreducible components of $X_\infty$ and $\ord_j$ is the shorthand for $\ord_{V_j}$ (where $\ord_{V_j}$ is as defined in section \ref{divisor-subsection}). Support of $[D^\delta_{d, \infty}]$ being $X_\infty$ justifies index $\infty$ as a subscript of $D^\delta_{d, \infty}$. 

\begin{lemma} \label{divisor-at-infinity}
Let $X$, $A$, $\delta$ and $D^\delta_{d, \infty}$ be as above. Then
\begin{compactenum}
\item \label{div-at-infinity} $[D^\delta_{d, \infty}] = \sum_{j=1}^N \frac{d}{d_j}[V_j]$ where for every $j$, integer $d_j$ is the positive generator of the subgroup of $\zz$ generated by $\{\delta_j(f): f \in A\}$. 
\item \label{div-g-d-delta-g-d} Let $g \in A$ be such that $\delta_g$ is finitely generated. Then the principal divisor of $g^{d}$ on $X^{\delta_g}$ is $[\divdelta[g](g^{d})] = d[\divclosuredelta[g]_X(g)] - e_g[\divinfinitydeltaa{g}{}]$.
\end{compactenum}
\end{lemma}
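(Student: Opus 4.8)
The plan is to prove each part by computing the relevant Weil divisor coefficient by coefficient, using that $D^\delta_{d,\infty}$ is supported on the boundary together with the dictionary between the components $\delta_j$ of a subdegree and the prime divisors at infinity of $\xdelta$. Throughout I will use the identifications $\adelta = \dsum_{d\ge 0}F_dt^d$, $X_\infty = V(t) \subseteq \xdelta$ and $\gr^\delta A \cong \adelta/(t)$, together with the facts (from the construction of $\xdelta$, cf. \cite{sub1}) that the irreducible components of $X_\infty$ are exactly $V_1,\dots,V_N$ with $V_j$ the one arising from $\delta_j$, that the minimal prime $\ppp_j$ of $\gr^\delta A$ attached to $\delta_j$ has the property that the class of $f\in A$ is killed in $\gr^\delta A/\ppp_j$ exactly when $\delta(f)>\delta_j(f)$, and that for $0\ne f\in A$ one has $\ord_j(f) = -\delta_j(f)/d_j$.

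\textbf{Part \ref{div-at-infinity}.} Since the local equation of $D^\delta_{d,\infty}$ on $U_0 = \spec A$ is the unit $1$, its support lies in $X_\infty$, so $[D^\delta_{d,\infty}] = \sum_{j=1}^N \ord_j(D^\delta_{d,\infty})[V_j]$ and it suffices to compute $\ord_j(D^\delta_{d,\infty})$ for each $j$. I would fix $j$, pick a chart $U_k = \spec\adelta_{(g_kt^{l_k})}$ of the given cover containing the generic point $\eta_j$ of $V_j$, and note that there $D^\delta_{d,\infty}$ is cut out by $h_k = t^d/(g_kt^{l_k})^{d/l_k}$, whose image in $\kk(X)$ is $g_k^{-d/l_k}$; hence $\ord_j(D^\delta_{d,\infty}) = \ord_j(h_k) = -\tfrac{d}{l_k}\ord_j(g_k) = \tfrac{d}{l_kd_j}\,\delta_j(g_k)$. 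It then remains to see $\delta_j(g_k) = l_k$: because $\eta_j\in D_+(g_kt^{l_k})$, the homogeneous prime $\ppp_{\eta_j}$ does not contain $g_kt^{l_k}$, and since $(t)\subseteq\ppp_{\eta_j}$ this forces first $g_kt^{l_k}\notin(t)$, i.e. $\delta(g_k)=l_k$, and then that the class of $g_k$ in $\gr^\delta A$ is not in $\ppp_{\eta_j}/(t)=\ppp_j$; by the property of $\ppp_j$ recalled above this gives $\delta_j(g_k)=\delta(g_k)=l_k$. Plugging in yields $\ord_j(D^\delta_{d,\infty}) = d/d_j$.

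\textbf{Part \ref{div-g-d-delta-g-d}.} Here I would work on $X^{\delta_g}$, whose prime divisors are the closures $\overline V$ of prime divisors $V$ of $X$ together with the components $V_1,\dots,V_N$ of $X^{\delta_g}\setminus X$. Along each $\overline V$ the valuation $\ord_{\overline V}$ restricts on $A$ to $\ord_V$, so the coefficient of $[\divdelta[g](g^{d})]$ along $\overline V$ is $d\,\ord_V(g)$, which is by definition the coefficient of $d[\divclosuredelta[g]_X(g)]$ along $\overline V$, while $D^{\delta_g}_{d,\infty}$ has coefficient $0$ there; so the two sides agree on the finite part. To compare coefficients along each $V_j$, write $\delta_g = \max_j \delta_g^{(j)}$ with $\delta_g^{(j)} := (e_g/\delta_j(g))\,\delta_j$; if $\bar d_j$ generates the subgroup of $\zz$ generated by $\{\delta_j(f):f\in A\}$, then the corresponding generator for $\delta_g^{(j)}$ is $d_j' = e_g\bar d_j/\delta_j(g)$. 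By Part \ref{div-at-infinity} applied to $\delta_g$, the coefficient of $[D^{\delta_g}_{d,\infty}]$ along $V_j$ is $d/d_j'$; and since $\ord_j$ restricted to $A$ is $-\delta_g^{(j)}/d_j'$ (equivalently $-\delta_j/\bar d_j$, the rescaling being irrelevant), we get $\ord_j(g) = -e_g/d_j'$, so the coefficient of $[\divdelta[g](g^{d})]$ along $V_j$ equals $d\,\ord_j(g) = -de_g/d_j' = -e_g\cdot(d/d_j')$, exactly the coefficient of $-e_g[D^{\delta_g}_{d,\infty}]$. Summing over all prime divisors of $X^{\delta_g}$ gives the identity.

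\textbf{Main obstacle.} The only genuinely non-formal ingredient is the foundational dictionary recalled in the first paragraph — that $X_\infty = V_1\cup\cdots\cup V_N$, that $\ord_j|_A = -\delta_j/d_j$, and the stated behaviour of $\gr^\delta A$ modulo its minimal primes; granting these, both parts are a routine comparison of divisors. The one step requiring a small argument beyond bookkeeping is the equality $\delta_j(g_k)=l_k$ for a chart $U_k$ of the chosen cover passing through $\eta_j$, which is where the minimal prime structure of $\gr^\delta A$ is actually used; I would also take care that the identity in Part \ref{div-g-d-delta-g-d} uses the same $d$ (a common multiple of the chart degrees of $X^{\delta_g}$) on both sides.
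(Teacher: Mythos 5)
Your proof is correct and follows essentially the same route as the paper's: for Part~1, both compute $\ord_j(D^\delta_{d,\infty})$ on a chart $U_k$ meeting $V_j$ via $\ord_j(g_k^{-d/l_k})$ and the key equality $\delta_j(g_k)=l_k$ (coming from $g_kt^{l_k}\notin\ppp_j$), and for Part~2 both compare coefficients prime divisor by prime divisor, using $\ord_{V'_j}(g)=-e_g/d'_j$ and an application of Part~1 to $\delta_g$. The only presentational slip is your description of $X^{\delta_g}\setminus X$ as having components ``$V_1,\dots,V_N$'': the minimal presentation of $\delta_g$ may have only $M\le N$ associated semidegrees (the paper reindexes and works with $M$), so some $\delta_g^{(j)}$ may be redundant and contribute no prime divisor; since your per-component computation is correct whenever $\delta_g^{(j)}$ is actually in the minimal presentation, the conclusion is unaffected, but the set of indices should be restricted accordingly.
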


\begin{proof}
\begin{asparaenum}
\item Fix integer $j$, $1 \leq j \leq N$. Local ring $\localj$ is a discrete valuation ring and its associated valuation is $\nu_j(\cdot) := -\frac{\delta_j(\cdot)}{d_j}$ (proposition \ref{pole-and-degree}). Pick $k$, $1 \leq k \leq N$, such that $V_j \cap U_k \neq \emptyset$. Recall that $U_k := \spec \adelta_{(g_kt^{l_k})}$ and a local equation for $D^\delta_{d, \infty}$ on $U_k$ is $\frac{t^d}{(g_k)^{d/l_k}t^d}$. Let $\ppp_j$ be the ideal of $\adelta$ corresponding to $V_j$. Since $V_j \cap U_k \neq \emptyset$, it follows that $g_kt^{l_k} \not\in \ppp_j$ and therefore $\delta_j(g_k) = l_k$ according to assertion \ref{part1-gensgf-characterize-lemma} of lemma \ref{gensgf-characterize-lemma}. Therefore $\ord_j(D^\delta_{d, \infty}) = \nu_j(\frac{t^d}{(g_k)^{d/l_k}t^d}) = \nu_j(1/g_k^{d/l_k}) = -\frac{d}{l_k}\nu_j(g_k) = \frac{d}{l_k}\cdot\frac{l_k}{d_j} = \frac{d}{d_j}$. It follows that $[D^\delta_{d, \infty}] := \sum_{j=1}^N \ord_j(D^\delta_{d, \infty})[V_j] = \sum_{j=1}^N \frac{d}{d_j}[V_j]$, which completes the proof of assertion \ref{div-at-infinity}.
\item Reindexing the $\delta_j$'s if necessary, we may assume that the minimal presentation of $\delta_g$ is $\delta_g = \max\{\frac{e_g \cdot \delta_j}{\delta_j(g)}: 1 \leq j \leq M\}$ for some $M \leq N$. For $1 \leq j \leq M$, let $V'_j$ be the irreducible component of $X^{\delta_g}_\infty$ corresponding to $\delta_j$ and $d'_j$ be the positive generator of the subgroup of $\zz$ generated by $\{\frac{e_g \cdot \delta_j(h)}{\delta_j(g)}:h \in A\}$. Then, as a straightforward consequence of proposition \ref{pole-and-degree} it follows that $\ord_{V'_j}(g^d) = -  \frac{e_gd}{d'_j}$ for every $j$ and therefore $[\divdelta[g](g^d)] = d[\divclosuredelta[g]_X(g)] + \sum_{j=1}^M \ord_{V'_j}(g^d)[V'_j] = d[\divclosuredelta[g]_X(g)] - \sum_{j=1}^M \frac{e_gd}{d'_j}[V'_j]$. On the other hand, applying assertion \ref{div-at-infinity} with $\delta_g$ in place of $\delta$ yields that $[D^{\delta_g}_{d, \infty}] = \sum_{j=1}^M \frac{d}{d'_j}[V'_j]$. Therefore $[\divdelta[g](g^{d})] = d[\divclosuredelta[g]_X(g)] - e_g[\divinfinitydeltaa{g}{}]$, as required. \qedhere
\end{asparaenum}
\end{proof}

\begin{thm}\label{quasi-affine-bezout-prelim}
Assume that $f := (f_1, \ldots, f_n):X \to \ank$ and $\delta := \max\{\delta_j: 1 \leq j \leq N\}$ are as in the first paragraph of this section and that $\delta_j(f_i) > 0$, $1 \leq i, j \leq n$. Assume also that for $i=1, \ldots, n$, subdegrees $\delta_{f_i}$ are finitely generated. Let $d_{f_i} \geq 1$, $1 \leq i \leq n$, be such that the $d_{f_i}$-uple embedding of $X^{\delta_{f_i}}$ is a closed immersion of $X^{\delta_{f_i}}$ into a projective space $\pp^{L_i}(\kk)$. Let $L := \prod_{i=1}^n (L_i + 1) - 1$ and $\bar X$ be the closure of the image of $X$ in $\pp^L(\kk)$ under the composition of the following maps:
$$X \into X^{\delta_{f_1}} \times \cdots \times X^{\delta_{f_n}} \into \pp^{L_1}(\kk) \times \cdots \times \pp^{L_n}(\kk) \into \pp^{L}(\kk),$$
where the first map is the diagonal embedding and the last map is the Segre embedding. Then for all $a \in \ank$,
\begin{align} \label{quasi-bezout}
|\finv(a)| &\leq \frac{e_{f_1}\cdots e_{f_n}}{n^n d_{f_1}\cdots d_{f_n}} \deg(\bar X), \tag{C}
\end{align}
where $|\finv(a)|$ is the number of the isolated points in $\finv(a)$ each counted with the multiplicity of $\finv(a)$ at the respective point. 
\end{thm}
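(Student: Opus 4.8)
The plan is to realize $|\finv(a)|$ as a lower bound for an honest intersection number of effective $\qq$-Cartier divisors on $\bar X$, to rewrite that number in terms of $\deg(\bar X)$ by means of Lemma~\ref{divisor-at-infinity}, and to close the estimate with a positivity inequality for the nef classes coming from the factors of the product $X^{\delta_{f_1}}\times\cdots\times X^{\delta_{f_n}}$.

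First I would fix $a=(a_1,\dots,a_n)\in\ank$. Since each $\delta_j$ is a semidegree with $\delta_j(c)=0$ for $c\in\kk^\ast$ and $\delta_j(f_i)>0$, one has $\delta_j(f_i-a_i)=\delta_j(f_i)$ for all $i,j$; hence $\delta_{f_i-a_i}=\delta_{f_i}$ and $e_{f_i-a_i}=e_{f_i}$, and in particular $\delta_{f_i-a_i}$ is still finitely generated. Because the image of $X$ in $\pp^{L}(\kk)$ lies on the Segre variety, $\bar X$ is identified with the closure of the diagonal image of $X$ in $X^{\delta_{f_1}}\times\cdots\times X^{\delta_{f_n}}$, and the $n$ coordinate projections restrict to birational morphisms $q_i\colon\bar X\to X^{\delta_{f_i}}$ which are isomorphisms over the open dense subset $X$. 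If $H_i$ is the hyperplane class of $\pp^{L_i}(\kk)$, the definition of the Segre embedding gives that the hyperplane class of $\pp^{L}(\kk)$ restricts on $\bar X$ to $\sum_{i=1}^n q_i^\ast H_i$, so $\deg(\bar X)=\bigl(\sum_{i=1}^n q_i^\ast H_i\bigr)^{n}$. Now I would identify $q_i^\ast H_i$: the $d_{f_i}$-uple embedding of $X^{\delta_{f_i}}$ corresponds to $\sheaf_{X^{\delta_{f_i}}}(d_{f_i})$, whose section ``$1\cdot t^{d_{f_i}}$'' cuts out exactly $\divinfinitydelta{f_i}$ (this is the very local data defining $\divinfinitydelta{f_i}$), so $H_i|_{X^{\delta_{f_i}}}\sim[\divinfinitydelta{f_i}]$. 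Applying Lemma~\ref{divisor-at-infinity}(\ref{div-g-d-delta-g-d}) with $g:=f_i-a_i$ and $d:=d_{f_i}$, and using that its left-hand side is principal, gives $d_{f_i}\,[\divclosuredelta[f_i]_X(f_i-a_i)]\sim e_{f_i}\,[\divinfinitydelta{f_i}]$ on $X^{\delta_{f_i}}$. Since $d_{f_i}[\divclosuredelta[f_i]_X(f_i-a_i)]$ is Cartier (a sum of a principal divisor and $e_{f_i}[\divinfinitydelta{f_i}]$), the pullback $D_i:=q_i^\ast\divclosuredelta[f_i]_X(f_i-a_i)$ is a well-defined effective $\qq$-Cartier divisor on $\bar X$ with $D_i\sim\tfrac{e_{f_i}}{d_{f_i}}q_i^\ast H_i$, and $D_i|_X=\Div_X(f_i-a_i)$ because $q_i$ is an isomorphism over $X$.

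Next I would run the count, in the spirit of Theorem~\ref{thm-affine-bezout}. Since $D_i|_X=\Div_X(f_i-a_i)$ and $X$ is open in $\bar X$, we have $\finv(a)\subseteq\bigcap_{i=1}^n D_i$, every isolated point $b$ of $\finv(a)$ is an isolated point of $\bigcap_i D_i$, and the local intersection multiplicity of $D_1,\dots,D_n$ at such a $b$ is the multiplicity of $\finv(a)$ at $b$ (both are by definition the intersection multiplicity at $b$ of the Cartier divisors cut out by the $f_i-a_i$, cf.\ \cite[Example 12.4.8]{fultersection}). As $\bar X$ is complete and the $D_i$ are effective $\qq$-Cartier, the total intersection number dominates the sum of the local contributions at isolated points, so
\[
|\finv(a)|\ \le\ D_1\cdots D_n\ =\ \prod_{i=1}^n\frac{e_{f_i}}{d_{f_i}}\;\bigl(q_1^\ast H_1\cdots q_n^\ast H_n\bigr),
\]
the second equality by multilinearity and invariance of intersection numbers under linear equivalence.

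It remains to compare the ``balanced'' mixed number $q_1^\ast H_1\cdots q_n^\ast H_n$ with $\deg(\bar X)=\bigl(\sum_i q_i^\ast H_i\bigr)^{n}$. Writing $h_i:=q_i^\ast H_i$, each $h_i$ is nef (pullback of a very ample class) and even effective (pullback of the effective divisor $\divinfinitydelta{f_i}$), and $\sum_i h_i$ is the restriction of a very ample class. The estimate I need is the reverse arithmetic–geometric-mean inequality $n^n\,(h_1\cdots h_n)\le(h_1+\cdots+h_n)^{n}$, after which
\[
|\finv(a)|\ \le\ D_1\cdots D_n\ =\ \prod_{i=1}^n\frac{e_{f_i}}{d_{f_i}}(h_1\cdots h_n)\ \le\ \frac{1}{n^n}\prod_{i=1}^n\frac{e_{f_i}}{d_{f_i}}\,\deg(\bar X),
\]
which is \eqref{quasi-bezout}. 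This last inequality is the heart of the matter and is the step I expect to be the main obstacle: it is false for arbitrary nef classes, so the argument must exploit that the $h_i$ are big and nef pullbacks of ample classes under birational morphisms issuing from the single variety $\bar X$ that dominates all the $X^{\delta_{f_i}}$ simultaneously — equivalently, that the Segre degree $\deg\bar X$ is never smaller than $n^n$ times this balanced number. I would derive it from the Khovanskii–Teissier/Hodge-type inequalities for these classes, or, failing a clean deduction, from a direct analysis of the part of $\bigcap_i D_i$ supported at infinity (whose excess exactly compensates the gap). As a consistency check I would first verify that when all the $\delta_{f_i}$ coincide — e.g. for a weighted degree, so that $\bar X$ is the diagonal and $h_1=\cdots=h_n$ — the displayed inequality is an equality and the whole chain reproduces the constant in \eqref{quasi-bezout} on the nose, in agreement with the equality case of Theorem~\ref{thm-affine-bezout} (cf.\ Example~\ref{example-weighted-bezout}).
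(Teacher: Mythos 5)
Your reduction is correct down to and including the inequality $|\finv(a)| \le D_1\cdots D_n = \prod_i \frac{e_{f_i}}{d_{f_i}}\,(h_1\cdots h_n)$ with $h_i := q_i^* H_i$; the paper's proof reaches exactly this point (your $D_i$ is its $E_i$, your $q_i$ is its $\pi_i$, and your $h_i$ is its $D_i := \pi_i^*(\divinfinitydelta{f_i})$), and the reduction to $a = 0$ via $\delta_j(f_i - a_i) = \delta_j(f_i)$ is also the paper's implicit move. But the remaining step, which you rightly flag as the heart of the matter, is a genuine gap, and the positivity repairs you sketch cannot work. The inequality $n^n(h_1\cdots h_n) \le (h_1 + \cdots + h_n)^n$ points in the \emph{opposite} direction to the Khovanskii--Teissier inequalities (those bound the mixed term from below by $\prod_i (h_i^n)^{1/n}$), and it fails under exactly the extra hypotheses you hope will save it. On the blowup $Z$ of $\pp^2(\kk)$ at a point, with exceptional curve $E$, blow-down $\sigma$, and $h_1 := 2\sigma^* H$ (a pullback of an ample class under a birational morphism) and $h_2 := 2\sigma^* H - E$ (ample on $Z$, i.e.\ the pullback of itself under the identity), one has $h_1^2 + h_2^2 = 4 + 3 = 7 < 8 = 2 h_1 h_2$, i.e.\ $4h_1 h_2 = 16 > 15 = (h_1 + h_2)^2$. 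So being big, nef, and a pullback of an ample class along a birational morphism from a single dominating variety is not enough.

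The paper closes the argument with an \emph{equality} $(h_1\cdots h_n) = \frac{1}{n^n}\deg\bar X$, not a positivity inequality, and this is the one device your proof is missing. Choose the homogeneous coordinates $[y_{i,0}:\cdots:y_{i,L_i}]$ of $\pp^{L_i}(\kk)$ so that $X^{\delta_{f_i}}_\infty = X^{\delta_{f_i}}\cap V(y_{i,0})$, and introduce the auxiliary map $s': X^\eta \to \pp^{L+n}(\kk)$ given by $y \mapsto [s(y): y_{1,0}^n : \cdots : y_{n,0}^n]$, where $s$ is the Segre embedding. The extra coordinates $z_i$ cut out \emph{hyperplane sections} $D'_i$ on $s'(X^\eta)$, so $(D'_1,\ldots,D'_n) = \deg s'(X^\eta)$, while ${s'}^*(D'_i) = n\,\pi_i^*(\divinfinitydelta{f_i}) = n h_i$, whence $h_1 \cdots h_n = \frac{1}{n^n}\deg s'(X^\eta)$; and the linear projection $\pi'$ forgetting $z_1,\ldots,z_n$ satisfies $\pi'\circ s' = s$ with mapping degree one on $s'(X^\eta)$, giving $\deg s'(X^\eta) = \deg s(X^\eta) = \deg\bar X$. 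The constant $n^n$ thus comes from each $z_i$ being an $n$-th power, not from a mixed-volume comparison. Replace your attempted reverse-AM--GM step by this construction and the rest of your argument goes through.
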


\begin{description}
\item[Question:] Is it true that the completeness of $\delta$ implies the finite generation property of every $\delta_{f_i}$?
\end{description}

\begin{proof}[Proof of theorem \ref{quasi-affine-bezout-prelim}]
Denote by $[y_{i,0}:\cdots:y_{i,L_i}]$ the homogeneous coordinates on $\pp^{L_i}(\kk)$. Without loss of generality we may assume that $X^{\delta_{f_i}}_\infty = X^{\delta_{f_i}} \cap V(y_{i,0})$. 

Denote by $X^\eta$ the closure of the diagonal embedding of variety $X$ into the product $X^{\delta_{f_1}} \times \cdots \times X^{\delta_{f_n}} \subseteq \pp^{L_1}(\kk) \times \cdots \times \pp^{L_n}(\kk) =: Y$. Denote by $[y_0: \cdots: y_L:z_1: \cdots: z_n]$ the homogeneous coordinates on $\pp^{L'}(\kk)$, where $L' := L + n$. Let us identify $\pp^L(\kk)$ with the subspace $V(z_1, \cdots, z_n)$ of $\pp^{L'}(\kk)$ and let $s: \pp^{L_1}(\kk) \times \cdots \times \pp^{L_n}(\kk) \to \pp^L(\kk)$ denote the Segre embedding. Let $s': X^\eta \to \pp^{L'}(\kk)$ be the map defined by:
$$s' :~ X^\eta \ni ([y_{1,0}:\cdots:y_{1,L_1}], \ldots, [y_{n,0}:\cdots:y_{n,L_n}]) \mapsto [s(y):(y_{1,0})^n : \cdots: (y_{n,0})^n] \in \pp^{L'}(\kk). $$

Fix an $i$, $1 \leq i \leq n$. Let $D_i := \pi_i^*(D^{\delta_{f_i}}_{d_{f_i}, \infty})$, where $\pi_i:X^\eta \to \xdeltaa{f_i}$ is the projection onto the $i$-th factor of $Y$. Due to our choice of $y_{i,0}$, Cartier divisor $D^{\delta_{f_i}}_{d_{f_i}, \infty}$ is precisely the restriction of the divisor of $y_{i,0}$ to $X^{\delta_{f_i}}$. It follows that ${s'}^*(D'_i) = nD_i$, where $D'_i$ is the restriction of the divisor of $z_i$ to $s'(X^\eta)$. 

Then $(D_1, \ldots, D_n) = \frac{1}{n^n}({s'}^*(D'_1), \ldots, {s'}^*(D'_n)) = \frac{1}{n^n}(D'_1, \ldots, D'_n)$, since intersection numbers are preserved under the pull backs by proper birational morphisms \cite[Example 2.4.3]{fultersection}. Since each $D'_i$ is the divisor of a linear form on $s'(X^\eta)$, it follows that the intersection number $(D'_1, \ldots, D'_n) = \deg s'(X^\eta)$. On the other hand, $s|_{X^\eta} = \pi' \circ s'$, where $\pi'$ is the projection onto the first $L$ coordinates. Since the mapping degree of $\pi'|_{s'(X^\eta)}$ is $1$, it follows that $\deg (s(X^\eta)) = \deg s'(X^\eta)$ \cite[Proposition 5.5]{mummetry}. Combining three equalities established in this paragraph it follows that $(D_1, \ldots, D_n) = \frac{1}{n^n}\deg s(X^\eta)$.

Let $E_i := \pi_i^*(\divclosuredelta[f_i]_X(f_i))$. According to assertion \ref{div-g-d-delta-g-d} of lemma \ref{divisor-at-infinity} $[\divdelta[f_i](f_i^{d_{f_i}})] = d_{f_i}[\divclosuredelta[f_i]_X(f_i)] -e_{f_i}[\divinfinitydelta{f_i}]$. Since $\pi_i^*(\divdelta[f_i](f_i)) = \Div_{X^\eta}(f_i)$ it follows that \begin{align*}
(E_1, \ldots, E_n) = \frac{e_{f_1}\cdots e_{f_n}}{d_{f_1}\cdots d_{f_n}}(D_1, \ldots, D_n) = \frac{e_{f_1}\cdots e_{f_n}}{n^n d_{f_1}\cdots d_{f_n}}\deg s(X^\eta)\ .
\end{align*}

Fix an $i$, $1 \leq i \leq n$. Note that $\divinfinitydelta{f_i}$ are {\em very ample}, i.e. are the pull backs of the hyperplane sections under the embeddings of $X^{\delta_{f_i}}$ into $\pp^{L_i}(\kk)$. In particular, these divisors are {\em base point free} \cite[Section II.7]{hart}. Then the pull backs $D_i$ of $\divinfinitydelta{f_i}$ are also base point free, and so are $e_{f_i}E_i$ (the latter being linearly equivalent to $d_{f_i}D_i$). Also, since $E_i$'s are {\em effective} (defined in section \ref{divisor-subsection}), it follows that the intersection number $(E_1, \ldots, E_n)$ bounds the sum of the intersection multiplicities of $E_i$'s at the isolated points of the intersection $\bigcap_{i=1}^n \supp(E_i)$ \cite[Section 12.2]{fultersection}. Of course $X \cap (\bigcap_{i=1}^n \supp(E_i)) = \finv (0)$. Therefore $|\finv(0)| \leq (E_1, \ldots, E_n)$, which completes the proof of the theorem.
\end{proof}

\noindent {\bf Future plans:} 
\begin{asparaenum}
\item Let $f := (f_1, \ldots, f_n):X \to \ank$ be any generically finite map (not necessarily satisfying the hypotheses of theorem \ref{quasi-affine-bezout-prelim}). Replacing $f$ by $\xi \circ f$ for a generic affine transformation $\xi$ of $\ank$ (and reordering $\delta_j$'s if necessary), one may assume that there is an $M \leq N$ such that 
\begin{compactenum}
\item $\delta_j(f_i) > 0$ for all $i$ and all $j = 1, \ldots, M$, and 
\item $\delta_j(f_i) = 0$ for all $i$ and all $j = M+1, \ldots, N$.
\end{compactenum}
We expect that it should be possible to extend theorem \ref{quasi-affine-bezout-prelim} in this setting as a consequence of extending the arguments of the proof of theorem \ref{quasi-affine-bezout-prelim} to the case of $\tilde X := \spec \tilde A$, where $\tilde A := \{g \in A: \delta_j(g) \leq 0$ for $M+1 \leq j \leq N\}$, and for the completion ${\tilde X}^{\tilde \delta}$ of $\tilde X$ determined by $\tilde \delta := \max\{\delta_j: 1 \leq j \leq M\}$. 
\item Moreover, we hope to prove (by means of an extension of our theorem \ref{bern-condition-at-infinity} to the case of subdegrees) that for completion $\xdelta$ that preserves map $f$ at $\infty$ (in the setting of theorem \ref{quasi-affine-bezout-prelim}), inequality in \eqref{quasi-bezout} can be replaced by equality.
\end{asparaenum}
\section{Iterated Semidegrees}\label{sec-iterated}
\newcommand{\atildedelta}{\profinggg{\tilde \delta}}
\newcommand{\xtildedelta}{X^{\tilde \delta}}

In this section we describe particularly simple semidegrees generalizing weighted homogeneous degrees for which we establish a constructive version of affine Bezout-type theorem. Our dream is that a stronger version of Main Existence Theorem \ref{thm-noeth-integral-subdegree} would be valid with subdegrees in whose minimal presentations only constructive semidegrees `like' the iterated semidegrees of this section would appear, and we expect that a precise constructive version of an affine Bezout-type theorem for any generically finite map $f: \ank \to \ank$ would follow.

Let $A$ be a domain and $\delta$ be a degree like function on $A$. Pick $f \in A$ and an integer $w$ with $w < \delta(f)$. Let $s$ be an indeterminate over $A$ and $\delta_e$ be a `natural' extension of $\delta$ to $A[s]$ such that $\delta_e(s) = w$, namely: $\delta_e(\sum a_is^i) := \max_{a_i \neq 0}(\delta(a_i)+iw)$. Of course $\delta$ is a degree like function iff $\delta_e$ is a degree like function.

\begin{lemma} \label{semi-extension}
$\delta$ is a semidegree iff $\delta_e$ is a semidegree.
\end{lemma}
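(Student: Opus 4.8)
The plan is to handle the two implications separately. The converse ($\delta_e$ a semidegree $\Rightarrow$ $\delta$ a semidegree) is immediate: $A \subseteq A[s]$ and, directly from the formula $\delta_e(\sum a_i s^i) = \max_{a_i \neq 0}(\delta(a_i)+iw)$, the restriction of $\delta_e$ to $A$ is $\delta$; so if $\delta_e$ is multiplicative on nonzero elements, then $\delta(fg) = \delta_e(fg) = \delta_e(f)+\delta_e(g) = \delta(f)+\delta(g)$ for nonzero $f,g \in A$, and since $\delta$ is already a degree like function it is then a semidegree.

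The content is the forward implication. Assume $\delta$ is a semidegree and fix nonzero $P = \sum_i a_i s^i$, $Q = \sum_j b_j s^j \in A[s]$ (so $PQ \neq 0$, $A[s]$ being a domain). Since $\delta_e$ is a degree like function --- the remark preceding the lemma --- we have $\delta_e(PQ) \leq \delta_e(P)+\delta_e(Q)$ for free, so it suffices to exhibit one coefficient of the polynomial $PQ$ whose $\delta_e$-weighted value equals $\delta_e(P)+\delta_e(Q)$. Put $d_P := \delta_e(P) = \max_i\{\delta(a_i)+iw\}$, $d_Q := \delta_e(Q)$ similarly, let $I := \{i : a_i \neq 0,\ \delta(a_i)+iw = d_P\}$ and $J := \{j : b_j \neq 0,\ \delta(b_j)+jw = d_Q\}$ (both nonempty), and --- this is the one real choice in the proof --- set $i_0 := \max I$, $j_0 := \max J$ and examine the coefficient $c := \sum_{i+j = i_0+j_0} a_i b_j$ of $s^{i_0+j_0}$ in $PQ$.

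I would then argue that no cancellation occurs in $c$ at the top $\delta$-level. If $a_i b_j \neq 0$ with $i+j = i_0+j_0$ and $(i,j) \neq (i_0,j_0)$, then $i$ and $j$ cannot both belong to $I$ and $J$ (else $i \leq i_0$, $j \leq j_0$, $i+j = i_0+j_0$ would force $(i,j)=(i_0,j_0)$), so, say, $\delta(a_i)+iw < d_P$, whence
\begin{align*}
\delta(a_i b_j) &\leq \delta(a_i)+\delta(b_j) = \bigl(\delta(a_i)+iw\bigr) + \bigl(\delta(b_j)+jw\bigr) - (i_0+j_0)w \\
&< d_P + d_Q - (i_0+j_0)w = \delta(a_{i_0}) + \delta(b_{j_0});
\end{align*}
while, because $\delta$ is a semidegree, $\delta(a_{i_0}b_{j_0}) = \delta(a_{i_0})+\delta(b_{j_0}) = d_P+d_Q-(i_0+j_0)w$. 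Thus $a_{i_0}b_{j_0}$ strictly $\delta$-dominates every other summand of $c$, and the ultrametric property of degree like functions ($\delta(x+y) = \delta(x)$ when $\delta(y) < \delta(x)$) gives $\delta(c) = d_P+d_Q-(i_0+j_0)w$. Therefore $\delta_e(PQ) \geq \delta(c) + (i_0+j_0)w = d_P+d_Q$, and together with the reverse inequality this is the desired equality.

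The only delicate point --- the step I expect to need the most care --- is exactly this possibility that several coefficients of $P$ (or $Q$) attain the weighted maximum and their leading forms cancel; passing to the largest such indices $i_0,j_0$ is what isolates a unique surviving cross term $a_{i_0}b_{j_0}$ in the coefficient of $s^{i_0+j_0}$. Equivalently, one can run the no-cancellation step inside $\gr^\delta A$, which is a domain precisely because $\delta$ is a semidegree. Incidentally, the auxiliary data ``$f \in A$'' and ``$w < \delta(f)$'' from the setup are not used in this lemma --- they matter only for the subsequent iterated quotient construction.
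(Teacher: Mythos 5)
Your proof is correct and is essentially the paper's argument in a slightly different packaging. The paper first reduces WLOG to the ``leading parts'' $G_d$, $H_e$ (those terms $g_is^i$ with $\delta(g_i)+iw$ maximal) --- which is exactly the set of indices you call $I$, $J$ --- and then picks the top $s$-power $i_0+j_0$, whose coefficient after the reduction is the single product $g_{i_0}h_{j_0}$; you skip the reduction and instead argue directly via the ultrametric inequality that the cross term $a_{i_0}b_{j_0}$ strictly dominates every other summand of the coefficient of $s^{i_0+j_0}$ in $PQ$, which is the same cancellation observation absorbed into the paper's WLOG step.
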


\begin{proof}
The $(\Leftarrow)$ direction is obvious, since $\delta \equiv \delta_e|_A$. For the proof of the `only if' implication, let $G := \sum g_is^i, H := \sum h_js^j \in A[s]$ with $d := \delta_e(G), e := \delta_e(H)$. For each $k \geq 0$, let $G_k := \sum_{\delta(g_i)+ iw = k} g_is^i$ and $H_k := \sum_{\delta(h_i)+ iw = k} h_is^i$. It suffices to show that $\delta_e(G_dH_e) = d + e$, and hence we may w.l.o.g. assume $G = G_d$ and $H = H_e$. Let $i_0$ (resp. $j_0$) be the largest integer such that $g_{i_0} \neq 0$ (resp. $h_{j_0} \neq 0$). Then $GH = g_{i_0}h_{j_0}s^{i_0+j_0} + \sum_{m<i_0+j_0} a_ms^m$. Thus $\delta_e(GH) \geq \delta_e(g_{i_0}h_{j_0}s^{i_0+j_0}) = \delta(g_{i_0}h_{j_0})+ (i_0+j_0)w = \delta(g_{i_0})+ i_0w + \delta(h_{j_0})+ j_0w = d+e$. Since the inequality $\delta_e(GH) \leq d + e$ is obviously true, it follows that $\delta_e(GH) = d + e$. 
\end{proof}

\begin{rem} \label{sub-extension}
In fact $\delta$ is a subdegree iff $\delta_e$ is a subdegree, which follows by means of calculations similar to those in the proof of lemma \ref{semi-extension} and of the characterization of subdegrees as degree like functions $\eta$ satisfying $\eta(f^k) = k\eta(f)$ for all $f \in A$ and $k \geq 0$ (corollary \ref{sgf-reformulation}).
\end{rem}

Let $J$ denote the ideal generated by $ s - f$ in $A[s]$. Identify $A$ with $A[s]/J$ and define $\tilde\delta$ to be the degree like function on $A$ {\em induced by} $\delta_e$, i.e. $\tilde \delta(g) := \min\{\delta_e(G): G - g \in J\}$. Let $\aaa$ be the principal ideal generated by $f$ in $A$ and let $\aaa^\delta$ be the ideal induced in $\adelta$ by $\aaa$ (as defined in lemma \ref{preservation-criterion}). Denote by $\gr \aaa$ the ideal generated by the image of $\aaa^\delta$ in $\gr \adelta$, i.e. $\gr \aaa := \langle [(g)_{\delta(g)}]: g \in \aaa \rangle$, where $[(g)_{\delta(g)}]$ denotes the equivalence class of $(g)_{\delta(g)}$ in $\gr \adelta$.

\begin{rem} \label{gr-a-unit-bad}
A straightforward application of definitions shows that $\tilde \delta$ is a degree like function provided that $\delta_e$ is a degree like function and $\tilde \delta \equiv 0$ on $\kk$. Note that $\tilde \delta$ is a meaningful degree like function only if $[(f)_{\delta(f)}]$ is not a unit in $\gr \adelta$. Indeed, if $[(f)_{\delta(f)}]$ is a unit in $\gr \adelta$, then $[(1)_0] =  [(f)_{d}][(g)_{-d}] \in \gr \adelta$ for some $g \in A$ with $\delta(g) = -d$. It follows that if $1 - fg \neq 0$ then $\delta(1-fg) < 0$. Let $G := 1 - fg + gs \in A[s]$. Then $\delta_e(gs) = w - d < 0$ and therefore $\delta_e(G) < 0$. Moreover, $1 \equiv G \mod J$ in $A[s]$. Consequently $\tilde \delta(1) \leq \delta_e(G) < 0$. Since, for all $h \in A$ and $n \in \zz_+$, $\tilde\delta(h) = \tilde \delta(h\cdot (1)^n) \leq \tilde \delta(h) + n \tilde\delta(1)$, it follows that $\tilde\delta(h) = - \infty$ for all $h \in A$. Moreover, $(1)_1$ is a unit in $\profinggg{\tilde \delta}$ (since $((1)_1)^{-1} = (1)_{-1}$) and therefore $\gr \profinggg{\tilde \delta} \cong \profinggg{\tilde \delta}/ \langle (1)_1 \rangle$ is the zero ring.
\end{rem}

\begin{thm}[cf. {\cite[Example 5]{announcement}} and {\cite[Theorem 2.1.3]{preprint}}]\label{iterated-thm} 
\mbox{}
\begin{compactenum}
\labelformat{enumi}{\theenumi.}
\labelformat{enumii}{\theenumi(\theenumii)}
\item \label{trivial-assertions}
\begin{compactenum}[(a)] 
\item \label{nonnegative-iteration} If $\delta$ is non-negative and $w > 0$, then $\tilde \delta$ is non-negative.
\item \label{iterated-equality} $\tilde \delta(g) \leq \delta(g)$ for all $g \in A$. If $[(g)_{\delta(g)}] \not\in \gr \aaa$, then $\tilde \delta(g) = \delta(g)$.
\item \label{iterated-inequality} $\tilde \delta(f) \leq w < \delta(f)$. If $\delta$ is a semidegree and $[(f)_{\delta(f)}]$ is not a unit in $\gr \adelta$, then $\tilde \delta(f) = w$.
\end{compactenum}
\item \label{semi-iteration} Assume $\delta$ is a semidegree. Then 
\begin{compactenum}[(a)]
\item \label{total-isomorphism} The homomorphism of $\kk$-algebras $A[s]^{\delta_e} \cong \adelta[s] \to \profinggg{\tilde \delta}$ induced by the inclusion $\adelta \into \profinggg{\tilde \delta}$ (available due to $\tilde \delta \leq \delta$) and $s \mapsto (f)_{w}$ is surjective with kernel $\langle (s - f)_{\delta_e(s - f)} \rangle$.
\item \label{graded-isomorphism} $\gr \profinggg{\tilde \delta}$ is isomorphic to $(\gr \adelta/\gr a)[z]$, where $z$ is an indeterminate of degree $w$ and the isomorphism maps $z$ to the class of equivalence $[(f)_w]$ of $(f)_w \in \profinggg{\tilde \delta}$.
\item \label{can-iterate-semidegree} $\tilde\delta$ is a semidegree if and only if $\gr \aaa$ is a prime ideal of $\gring$.
\end{compactenum}
\end{compactenum}
\end{thm}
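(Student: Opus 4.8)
The plan is to prove the structure statements \ref{total-isomorphism} and \ref{graded-isomorphism} first and then read off the remaining assertions; the bounds in part \ref{trivial-assertions} are essentially formal, except for the sharp value $\tilde\delta(f)=w$, which I will only obtain at the end.

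For part \ref{trivial-assertions} I would argue directly from the definition $\tilde\delta(g)=\min\{\delta_e(G): G\equiv g \bmod J\}$. Assertion \ref{nonnegative-iteration} is immediate: if $\delta$ is non-negative and $w>0$, the formula $\delta_e(\sum a_is^i)=\max_{a_i\neq 0}(\delta(a_i)+iw)$ is visibly $\geq 0$, so $\tilde\delta\geq 0$ on $A\setminus\{0\}$ while $\tilde\delta(c)\leq\delta_e(c)=0$ for $c\in\kk^*$; moreover $\tilde\delta$ is a genuine degree like function here because $\delta(f)>w>0$ puts $[(f)_{\delta(f)}]$ in positive degree, so it cannot be a unit of the non-negatively graded ring $\gr\adelta$ (remark \ref{gr-a-unit-bad}). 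For \ref{iterated-equality}, the representative $G=g$ gives $\tilde\delta(g)\leq\delta(g)$; for the equality I prove the contrapositive: if $\tilde\delta(g)<\delta(g)$, pick $G=\sum_i a_is^i$ with $G(f)=g$ and $\delta_e(G)<\delta(g)$; then $\delta(a_0)\leq\delta_e(G)<\delta(g)$, so with $h:=\sum_{i\geq 1}a_if^{i-1}\neq 0$ one has $g-a_0=fh$, $\delta(fh)=\delta(g)$, and since the class of $(a_0)_{\delta(g)}$ dies in $\gr\adelta$ we get $[(g)_{\delta(g)}]=[(fh)_{\delta(fh)}]\in\gr\aaa$. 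Finally $G=s$ witnesses $\tilde\delta(f)\leq w<\delta(f)$, which is all I need from \ref{iterated-inequality} at this stage.

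The heart of the argument is part \ref{total-isomorphism}, and I expect the bookkeeping around the identification $A[s]^{\delta_e}\cong\adelta[s]$ (with $s$ in degree $w$) and the kernel computation to be the main obstacle. That identification and the bound $\tilde\delta(f)\leq w$ (which places $(f)_w$ in $\profinggg{\tilde\delta}$) make the prescribed data into a graded $\kk$-algebra map $\Phi\colon\adelta[s]\to\profinggg{\tilde\delta}$. For surjectivity, a homogeneous $(g)_d\in\profinggg{\tilde\delta}$ admits, by definition of $\tilde\delta$, a representative $G=\sum_i a_is^i$ with $G(f)=g$ and $\delta_e(G)\leq d$; then $(G)_d\in\adelta[s]$ and $\Phi((G)_d)=\sum_i (a_i)_{d-iw}(f^i)_{iw}=(g)_d$, every factor lying in $\profinggg{\tilde\delta}$ because $\tilde\delta(a_i)\leq\delta(a_i)\leq d-iw$ and $\tilde\delta(f^i)\leq iw$. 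The kernel is homogeneous, so I take $(G)_d$ with $\delta_e(G)\leq d$ mapping to $0$; then $G(f)=0$, i.e.\ $G=(s-f)H$ in $A[s]$, and since $\delta_e$ is again a semidegree (lemma \ref{semi-extension}) with $\delta_e(s-f)=\max(w,\delta(f))=\delta(f)$, it follows that $\delta_e(H)=\delta_e(G)-\delta(f)\leq d-\delta(f)$, whence $(G)_d=(s-f)_{\delta(f)}(H)_{d-\delta(f)}\in\langle(s-f)_{\delta(f)}\rangle$; the reverse inclusion is the obvious relation $\Phi((s-f)_{\delta(f)})=(f)_{\delta(f)}-(f)_{\delta(f)}=0$.

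The rest follows by passing to associated graded rings, i.e.\ quotienting by $\langle(1)_1\rangle$. Since $\delta(f)-w\geq 1$, in $\adelta[s]$ one has $(s)_{\delta(f)}=(1)_{\delta(f)-w}\cdot s\equiv 0\pmod{(1)_1}$, so modulo $\langle(1)_1\rangle$ the relation $(s-f)_{\delta(f)}=0$ becomes $[(f)_{\delta(f)}]=0$; combined with \ref{total-isomorphism} this gives $\gr\profinggg{\tilde\delta}\cong(\gr\adelta)[z]/\langle[(f)_{\delta(f)}]\rangle=(\gr\adelta/\gr\aaa)[z]$ with $z$ of degree $w$ sent to $[(f)_w]$, using that for a semidegree the induced ideal $\aaa^\delta$ is principal so $\gr\aaa=\langle[(f)_{\delta(f)}]\rangle$ (lemma \ref{semi-principal}); this is \ref{graded-isomorphism}. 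Now the deferred equality in \ref{iterated-inequality}: if $\tilde\delta(f)<w$ then $(f)_w=(1)_1(f)_{w-1}$ would vanish in $\gr\profinggg{\tilde\delta}$, i.e.\ $z=0$ in $(\gr\adelta/\gr\aaa)[z]$, forcing $\gr\adelta/\gr\aaa=0$ and hence $[(f)_{\delta(f)}]$ a unit, against the hypothesis. Finally, recalling that a degree like function on a domain is a semidegree exactly when its associated graded ring is an integral domain, $\tilde\delta$ is a semidegree iff $(\gr\adelta/\gr\aaa)[z]$ is a domain iff $\gr\adelta/\gr\aaa$ is a domain iff $\gr\aaa$ is a prime ideal of $\gring$ (it is proper precisely because $[(f)_{\delta(f)}]$ is not a unit), which is \ref{can-iterate-semidegree}.
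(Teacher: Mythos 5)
Your proof is correct and largely parallels the paper's (same graded map $\Phi\colon A[s]^{\delta_e}\to\profinggg{\tilde\delta}$, same passage to $\gr$ by quotienting $\langle(1)_1\rangle$), but your route to the equality $\tilde\delta(f)=w$ in \ref{iterated-inequality} is genuinely different and cleaner. The paper proves that equality head-on: it unpacks $\tilde\delta(f)<w$ into an identity $f=\sum_{j\ge k}a_jf^j$ with degree constraints and runs a three-case analysis ($k>1$, $k=1$, $k=0$) to exhibit an inverse of $[(f)_{\delta(f)}]$ in $\gr\adelta$. You instead observe that parts \ref{total-isomorphism}--\ref{graded-isomorphism} only need the cheap bound $\tilde\delta(f)\le w$, prove them first, and then read the equality off the graded isomorphism: if $\tilde\delta(f)<w$ then $[(f)_w]=0$ in $\gr\profinggg{\tilde\delta}$, so $z=0$ in $(\gr\adelta/\gr\aaa)[z]$, so $\gr\adelta/\gr\aaa=0$, so $[(f)_{\delta(f)}]$ is a unit --- a contradiction with no case split. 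You also replace the paper's appeal to lemma \ref{semi-principal} (for $\ker\Phi=J^{\delta_e}=\langle(s-f)_{\delta_e(s-f)}\rangle$) by the direct factorization $G=(s-f)H$ combined with additivity of the semidegree $\delta_e$, a small self-containment. The one thing to add is the paper's preliminary reduction via remark \ref{gr-a-unit-bad}: explicitly set aside the degenerate case where $[(f)_{\delta(f)}]$ is a unit (so that $\tilde\delta\equiv-\infty$ and $\gr\profinggg{\tilde\delta}=0$) before doing the bookkeeping in part \ref{semi-iteration}, since that case is logically prior to your eventual contradiction rather than a consequence of it.
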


Note that if $\delta$ is a semidegree, then $\gr \aaa$ is a principal ideal in $\gr \adelta$ generated by $[(f)_{\delta(f)}]$ due to lemma \ref{semi-principal}.

\begin{proof}
Assertion \ref{nonnegative-iteration} is a straightforward consequence of the definitions. Note that $\tilde \delta(g) \leq \delta_e(G)$ for all $g \in A$ and $G \in A[s]$ such that $G \equiv g\mod J$. The first assertion of \ref{iterated-equality} follows by setting $G := g$ in the previous sentence. Similarly, the first assertion of \ref{iterated-inequality} follows by setting $g := f$ and $G := s$. As for the second assertion of \ref{iterated-equality}, let $g \in A$ and $G \in A[s]$ be such that $G \equiv g \mod J$. Let $a_0, \ldots, a_k \in A$ such that $G = g + (s - f)(a_0 + a_1s + \cdots + a_ks^k) = (g - fa_0) + \sum_{i=1}^k (a_{i-1} - fa_i)s^i + a_ks^{k+1}$. Note that if $e := \delta(g - fa_0) < d := \delta(g)$, then $\delta(fa_0) = d$ and $(g)_{d} = (fa_0)_{d} + ((1)_1)^{d-e}(g-fa_0)_e$, so that $[(g)_{d}] = [(fa_0)_{d}] \in \gr \aaa$. In other words, if $[(g)_{\delta(g)}] \not\in \gr \aaa$, then $\delta(g-fa_0) \geq \delta(g)$ and therefore $\delta_e(G) \geq \delta(g)$. It follows that $\tilde \delta(g) \geq \delta(g)$, which concludes the proof of \ref{iterated-equality}.

Next we prove the second assertion of \ref{iterated-inequality}. Assume $\delta$ is a semidegree and contrary to the conclusion of \ref{iterated-inequality} that $\tilde \delta(f) < w$. Then it suffices to show that $[(f)_{d}]$ is a unit in $\gr \adelta$, where $d := \delta(f)$. Indeed, $\tilde \delta(f) < w$ in view of the definition of $\tilde \delta$ in terms of $\delta_e$ implies that there is an identity
\begin{align} \label{tilde-identity}
f = a_kf^k + a_{k+1}f^{k+1} + \cdots + a_lf^l
\end{align}
with $a_k, \ldots, a_l \in A$ such that for all $j$, $0 \leq k \leq j \leq l$, $\delta(a_j) + jw < w$. In particular, $\delta(a_0) < w$ if $a_0 \neq 0$ and $\delta(a_1) < 0$ if $a_1 \neq 0$. 

If $k > 1$, then dividing both sides of \eqref{tilde-identity} by $f$ it follows that $fg = 1$, where $g := \sum_{j=k}^l a_jf^{j-2}$. Then $\delta(g) = -\delta(f) = -d$ and therefore $[(f)_d]\cdot [(g)_{-d}] = [(1)_0] \in \gr\adelta$. Since $[(1)_0]$ is the identity in $\gr \adelta$, it follows that $[(f)_d]$ is a unit in $\gr \adelta$, which proves assertion \ref{iterated-inequality} in the case that $k > 1$.  

If $k = 1$, then \eqref{tilde-identity} implies that $1 - a_1 = fg_1$, where $g_1 := a_2 + a_3f + \cdots + a_lf^{l-2}$. Since $\delta(a_1) < 0$, it follows that $\delta(1 - a_1) = 0$ and therefore $\delta(g_1) = -d$. Moreover, $[(a_1)_0] = 0 \in \gr \adelta$. Hence $[(f)_d]\cdot [(g_1)_{-d}] = [(1)_0] \in \gr \adelta$. Consequently $[(f)_d]$ is a unit in $\gr \adelta$, as required.

It remains to consider the case of $k=0$. In this case $a_0 = fg_2$, with element $g_2 :=$ $1 - a_2f - a_3f^2 - \cdots - a_lf^{l-1}$. Then $\delta(g_2) = \delta(a_0) - \delta(f) < w - d < 0$ and $g_2 = 1 - fg_1$ with $g_1 := a_2 + a_3f + \cdots + a_lf^{l-2}$. Since $\delta(g_2) < 0 = \delta(1)$, it follows that $\delta(fg_1) = \delta(1) = 0$, and therefore $\delta(g_1) = - \delta(f) = -d$. Consequently $[(f)_d]\cdot [(g_1)_{-d}] = [(1)_0] - [(g_2)_0] = [(1)_0] \in \gr\adelta$, implying $[(f)_d]$ is a unit in $\gr \adelta$, which completes the proof of \ref{iterated-inequality}.

Next we prove assertion \ref{semi-iteration} Assume $\delta$ is a semidegree. Due to remark \ref{gr-a-unit-bad} it suffices to consider the case when $[(f)_{\delta(f)}]$ is not a unit in $\gr \adelta$. Then $\delta(f) = w$ according to assertion \ref{iterated-inequality}. We start with introducing two surjective $\kk$-algebra homomorphisms $\phi: A[s] \onto A$ and $\Phi: \profingg{A[s]}{\delta_e} \onto \profinggg{\tilde \delta}$ by means of formulae
\begin{align*}
\phi(\sum_{i=1}^k a_is^i) &:= \sum_{i=1}^k a_if^i\quad \text{for any}\ a_1, \ldots, a_k \in A, \\
\Phi((H)_{d}) &:= (\phi(H))_d\quad \text{for all}\ H \in A[s],\ d \geq \delta_e(H) \in \zz.
\end{align*}
Clearly $\phi$ is surjective and $\ker \phi = J$. It follows that $\Phi$ is a surjective homomorphism of graded rings with $\ker \Phi = J^{\delta_e}$, and consequently, $\profinggg{\tilde \delta} = \profingg{A[s]}{\delta_e}/J^{\delta_e}$. Moreover, $\delta$ being a semidegree on $A$ implies that $\delta_e$ is also a semidegree on $A[s]$ (lemma \ref{semi-extension}) and therefore $J^{\delta_e} = \langle (s - f)_{\delta_e(s - f)} \rangle$ (lemma \ref{semi-principal}), which completes the proof of \ref{total-isomorphism}.

Next we prove assertion \ref{graded-isomorphism}. Ring $\gr \profinggg{\tilde \delta} = \profinggg{\tilde \delta}/\langle (1)_1 \rangle \cong \profingg{A[s]}{\delta_e}/(J^{\delta_e} + \langle (1)_1 \rangle)$ because $\profinggg{\tilde \delta}$ $=$ $\profingg{A[s]}{\delta_e}/J^{\delta_e}$. The element $z$ in the assertion of \ref{graded-isomorphism} (which corresponds to $[(f)_w]$) is precisely the equivalence class $[(s)_w]$ of $(s)_w \in \profingg{A[s]}{\delta_e}$. Note that the homomorphism defined by $\adelta[s] \ni \sum (f_i)_{d-iw}s^i \mapsto (\sum f_is^i)_d \in \profingg{A[s]}{\delta_e}$ is an isomorphism. Since $\delta_e(s) = w < \delta(f) = \delta_e(f)$, it follows that $(s - f)_{\delta_e(s - f)} = (s)_w(1)_{\delta(f) - w} + (f)_{\delta(f)}$ and therefore $J^{\delta_e} + \langle (1)_1 \rangle = \langle (s - f)_{\delta_e(s - f)} , (1)_1\rangle = \langle (f)_{\delta(f)}, (1)_1 \rangle$. Hence $\gr \profinggg{\tilde \delta} \cong \adelta[s]/\langle (f)_{\delta(f)}, (1)_1 \rangle = (\adelta/\langle (f)_{\delta(f)}, (1)_1 \rangle)[s]$. But $\adelta/\langle (f)_{\delta(f)}, (1)_1 \rangle$ $\cong$ $(\adelta/\langle (1)_1 \rangle)/ (\langle (f)_{\delta(f)}, (1)_1 \rangle/\langle (1)_1 \rangle)$ and $\langle (f)_{\delta(f)}, (1)_1 \rangle/\langle (1)_1 \rangle$ is precisely the ideal generated by $[(f)_{\delta(f)}]$ in $\gr \adelta$, which is $\gr \aaa$, while $\adelta/\langle (1)_1 \rangle \cong \gr \adelta$. It follows that $\gr \profinggg{\tilde \delta}  \cong (\gr \adelta/\gr \aaa)[s]$, and completes the proof of assertion \ref{graded-isomorphism}.

It remains only to prove assertion \ref{can-iterate-semidegree}. Due to assertion \ref{graded-isomorphism}, $\gr \aaa$ is a prime ideal of $\gr \adelta$ iff $\gr \profinggg{\tilde \delta}$ is a domain and, of course, iff $\langle (1)_1 \rangle$ is a prime ideal of $\profinggg{\tilde \delta}$. But $\langle (1)_1 \rangle$ is a prime ideal of $\profinggg{\tilde \delta}$ iff $\tilde \delta$ is a semidegree (theorem \ref{gengfsgf-characterization}), which completes the proof of the theorem.
\end{proof}

Theorem \ref{iterated-thm} motivates the following:
\begin{defn*}
Let $\delta$ be a semidegree on $A$. The {\em leading form} $\ld_\delta(f)$ of an element $f$ of $A$ is the equivalence class $[(f)_{\delta(f)}]$ of $(f)_{\delta(f)}$ in $\gr\adelta$.   
\end{defn*}

If $\delta$ is a semidegree on $A$ and the ideal $\langle \ld_\delta(f)\rangle$ of $\gr \adelta$ generated by the leading form $\ld_\delta(f)$ of $f \in A$ is prime, then $\tilde \delta$ is also a semidegree on $A$ (theorem \ref{iterated-thm}). Semidegree $\tilde \delta$ differs from $\delta$ according to assertion \ref{trivial-assertions} of theorem \ref{iterated-thm}. On the other hand, assertion \ref{iterated-equality} of theorem \ref{iterated-thm} shows that $\tilde \delta$ agrees with $\delta$ off $\gr \aaa$. We will say that $\tilde \delta$ is formed by the {\em iteration procedure} starting with semidegree $\delta$ by means of $f \in A$.

\begin{example}\label{iterated-semi-baby-example-p2}
Let $A := \kk[x_1,x_2]$ and $\delta$ be the semidegree on $A$ defined in \ref{iterated-semi-baby-example}. Recall that $\delta(x_1) = 3$, $\delta(x_2) = 2$ and $\delta(x_1^2-x_2^3)= 1$. Moreover, $\kk$-algebra $\adelta$ coincides with $\kk[(1)_1, (x_1)_3, (x_2)_2, (x_1^2-x_2^3)_1] = \kk[X_1,X_2,Y,Z]/\langle YZ^5 - X_1^2 +X_2^3 \rangle$. We claim that $\delta$ is formed by an iteration procedure by means of $f:= x_1^2-x_2^3$ starting with the weighted degree $\eta$ which assigns weight $3$ to $x_1$ and $2$ to $x_2$. Indeed, $\gr \profinggg{\eta} \cong \kk[x_1, x_2]$ via the map that sends $\ld_\eta(h) \in \gr \profinggg{\eta}$ to the leading weighted homogeneous component of $h$. Then, since $f = x_1^2-x_2^3$ is weighted homogeneous, $\ld_\eta(f) = f$. Since $\ld_\eta(f) = f \in  \kk[x_1,x_2] = \gr \profinggg{\eta}$, it follows that ideal $\langle \ld_\eta(f) \rangle$ is prime. Therefore according to assertion \ref{semi-iteration} of theorem \ref{iterated-thm}, degree like function $\tilde \eta$ formed by the iteration procedure by means of $f$ starting with $\eta$ is in fact a semidegree. Also $\profinggg{\tilde \eta} = \profingg{A[s]}{\eta_e}/\langle (s - f)_6 \rangle$, where $\eta_e$ is the weighted degree on $A[s]$ that extends $\eta$ and sends $s$ to $1$, as defined in the paragraph preceding lemma \ref{semi-extension}. Then with $t := (1)_1$, $\profingg{A[s]}{\eta_e}/\langle (s - x_1^2 + x_2^3)_6 = \kk[x_1,x_2,s,t]/\langle st^5 - x_1^2 + x_2^3 \rangle \cong \adelta$. To summarize, semidegree $\delta$ of example \ref{iterated-semi-baby-example} coincides with the iterated semidegree $\tilde \eta$.
\end{example}

\begin{rem}
Assume a semidegree $\delta$ on the coordinate ring $A$ of an affine variety $X$ is constructed by means of finitely many iterations starting with a semidegree $\eta$. Denote by $X^\eta$ and $\xdelta$ the completions of the $d$-uple embedding of $X$ into appropriate projective spaces (valid for appropriate $d \in \zz_+$ \cite[Lemma in section III.8]{mumred}). Then we can express the degree of $\xdelta$ in terms of the degree of $X^\eta$ (in a straightforward generalization of theorem \ref{iterated-degree-thm} below). In particular, in the special case of $\eta$ being a weighted homogeneous degree on $A  := \kk[x_1, \ldots, x_n]$ with weights $0 < d_i := \eta(x_i)$, $1 \leq i \leq n$, $\deg X^\eta = \frac{1}{d_1\cdot \cdots \cdot d_n}$ (example \ref{example-weighted-bezout}) and an explicit formula for $D := \deg \xdelta$, which appears in the affine Bezout-type theorem \ref{thm-affine-bezout}, follows:
\end{rem}

\begin{thm}
Let $\delta$ be a complete degree like function on the coordinate ring $A$ of an affine variety $X$, $f \in A$ and $w \in \zz$ with $0< w < \delta(f)$. Let $\delta_e$ and $\tilde \delta$ be degree like functions respectively on $A[s]$ and $A$ defined as above. Finally, let $d \in \zz_+$ be such that both $\xdelta$ and $X^{\tilde \delta}$ embeds into a usual projective space $\pp^l(\kk)$ via the $d$-uple embedding, and $D$ (resp. $\tilde D$) be the degree of the image of $\xdelta$ (resp. $\xtildedelta$) in $\pp^l(\kk)$. If the ideal $\scrI := \langle (s - f)_{\delta_e(s - f)} \rangle$ of $A[s]^{\delta_e}$ is prime, then $\tilde D = \frac{e}{w}D$.
\end{thm}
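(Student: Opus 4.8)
Put $n := \dim X$ and $e := \delta_e(s-f)$; since $w < \delta(f)$ one has $e = \delta(f)$. The plan is to exhibit $\xdelta$ and $\xtildedelta$ as prime divisors on one ambient projective variety $Z$, cut out there by homogeneous elements of degrees $w$ and $e$, and to recover $D$ and $\tilde D$ by intersecting against a common hyperplane class on $Z$; the identity then becomes the equality of ratios $w$ and $e$.

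\emph{Algebraic set-up.} As in the proof of Theorem \ref{iterated-thm}, the assignment $\sum_i (g_i)_{m-iw}s^i \mapsto (\sum_i g_is^i)_m$ defines an isomorphism of graded $\kk$-algebras $\adelta[s] \cong A[s]^{\delta_e}$, with $s$ of degree $w$ on the left; set $Z := \proj(\adelta[s])$. Since $\dim A[s]^{\delta_e} = \dim A[s]+1 = n+2$, $Z$ is a projective variety of dimension $n+1$. The surjection $A[s]\to A$, $s\mapsto f$, with kernel $\langle s-f\rangle$, induces a graded surjection $\adelta[s]\onto\atildedelta$ whose kernel is $J^{\delta_e} := \dsum_d(\langle s-f\rangle\cap F_d^{\delta_e})$, so $\atildedelta\cong\adelta[s]/J^{\delta_e}$. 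One always has $\scrI = \langle(s-f)_e\rangle\subseteq J^{\delta_e}$, and the hypothesis forces equality: $\adelta[s]\cong A[s]^{\delta_e}$ is a finitely generated $\kk$-algebra ($\delta$, hence $\delta_e$, being complete) and a domain, hence catenary and equidimensional of dimension $n+2$; $\scrI$ is a nonzero principal ideal, so of height $1$, and prime by assumption, while $J^{\delta_e}$ is prime (its quotient $\atildedelta$ is a domain) of height $(n+2)-(n+1)=1$ — here we may assume $\tilde\delta$ is a genuine degree like function, since otherwise $[(f)_{\delta(f)}]$ is a unit in $\gr\adelta$ by Remark \ref{gr-a-unit-bad} and the statement is vacuous. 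A height-one prime contained in a height-one prime coincides with it, so $\scrI = J^{\delta_e}$ and $\atildedelta\cong\adelta[s]/\scrI$. Hence $\xdelta = \proj(\adelta[s]/\langle s\rangle) = V_+(\langle s\rangle)$ and $\xtildedelta = V_+(\scrI)$ are $n$-dimensional closed subvarieties of $Z$, cut out by the principal prime ideals $\langle s\rangle$ and $\langle(s-f)_e\rangle$; since each generator is homogeneous and generates a prime, the local ring of $Z$ along the respective divisor is a discrete valuation ring in which that generator has value $1$.

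\emph{Intersection count.} Replacing $d$ by a large enough multiple — which multiplies both $D$ and $\tilde D$ by the same power of the ratio and thus leaves the asserted identity unchanged — we may assume $\scrO_Z(d)$ is invertible on $Z$. It restricts on $\xdelta$ and on $\xtildedelta$ to $\scrO_{\xdelta}(d)$ and $\scrO_{\xtildedelta}(d)$, the very ample sheaves defining the $d$-uple embeddings, so by the projection formula $D = \scrO_Z(d)^n\cdot[\xdelta]$ and $\tilde D = \scrO_Z(d)^n\cdot[\xtildedelta]$, while $D_Z := \scrO_Z(d)^{n+1}\cdot[Z]$ is the degree of the $d$-uple image of $Z$. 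Fix a nonzero $h\in\adelta[s]_d$ with divisor of zeros $H$ on $Z$, so $\scrO_Z(d)^n\cdot[H] = D_Z$. Because $s$ and $(s-f)_e$ have value $1$ along $\xdelta$ and $\xtildedelta$ respectively, the zero cycles of $s^d$ and of $(s-f)_e^d$ are $d[\xdelta]$ and $d[\xtildedelta]$; comparing $s^d$ with $h^w$ (both global sections of $\scrO_Z(d)^{\otimes w}$) and $(s-f)_e^d$ with $h^e$ (both global sections of $\scrO_Z(d)^{\otimes e}$) yields the rational equivalences $d[\xdelta]\sim w[H]$ and $d[\xtildedelta]\sim e[H]$ in the Chow group of $Z$. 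Capping with $\scrO_Z(d)^n$ gives $dD = wD_Z$ and $d\tilde D = eD_Z$, hence $\tilde D = \tfrac ew D$.

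I expect the main obstacle to be the identification $\scrI = J^{\delta_e}$ — this is precisely where the primality of $\scrI$ is used, guaranteeing both that $\xtildedelta$ really is the degree-$e$ hypersurface $V_+(\scrI)$ (rather than a proper subscheme of it) and that it occurs with multiplicity one as a cycle on the possibly non-normal $Z$. The remaining steps are the routine Segre--Veronese and projection-formula bookkeeping of the kind already carried out in the proof of Theorem \ref{quasi-affine-bezout-prelim}.
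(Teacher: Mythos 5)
Your proof is correct and takes essentially the same approach as the paper's: both exhibit $\xdelta$ and $\xtildedelta$ as hypersurfaces of weighted degrees $w$ and $e$ inside the common $(n+1)$-dimensional variety $Z = \proj(A[s]^{\delta_e})$ (the paper's $Y = V(\scrJ) \subseteq \WP'$, cut out by $S$ and $F - ST^{e-w}$ respectively) and extract $\tilde D/D = e/w$ by comparing each to the degree of $Z$. Your explicit height argument deducing $\scrI = J^{\delta_e}$ from the primality hypothesis is a welcome addition, since the paper's step 1 asserts that identification without justification.
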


\begin{proof}
1. $\atildedelta \cong \profingg{A[s]}{\delta_e} / \scrI$.

2. The homomorphism defined by $\adelta[s] \ni \sum (f_i)_{d-iw}s^i \mapsto (\sum f_is^i)_d \in \profingg{A[s]}{\delta_e}$ is an isomorphism.

3. Let $e := \delta(f)$ and $t := (1)_1 \in \adelta$. Then $\scrI = \langle (f)_e - st^{e-w} \rangle$.

4. Let $(1)_1, (f_1)_{d_1}, \ldots, (f_k)_{d_k}$ generate $\adelta$ as a $\kk$-algebra. Then there is a surjection $\Phi: \kk[T, Y_1, \ldots, Y_k] \onto \adelta$, which induces a surjection $\Phi_e: \kk[T, Y_1, \ldots, Y_k, S] \onto \adelta[s] \cong \profingg{A[s]}{\delta_e}$. Let $\scrJ := \ker \Phi$, and $F$ be a weighted homogeneous polynomial in $Y_j$'s such that $\bar \delta(F) = e$ and $\Phi(F) = (f)_e$. Then $\ker \Phi_e = \langle \scrJ, F - ST^{e-w} \rangle$. 

5. The surjections in the previous steps induces embeddings of the form: 
$$\xymatrix{
\xtildedelta \ar@{^{(}->}[0,1] \ar@{-->}[1,0]& \WP' := \pp^{k+1}(\kk;1, d_1, \ldots, d_k, w) \ar@{<-_{)}}[1,0] \\
\xdelta \ar@{^{(}->}[0,1] 					& \WP := \pp^k(\kk;1, d_1, \ldots, d_k)} $$
Choose $d$ such that the $d$-uple embedding $\psi_d$ embeds $\pp^{k+1}(\kk;1, d_1, \ldots, d_k, w)$ into a usual projective space $\pp^l(\kk)$. 

6. Let $Y := V(\scrJ) \subseteq \WP'$. Then $\deg \psi_d(Y)$ is the number of intersections of $Y$ with $n+1$ generic hypersurfaces of weighted degree $d$, which equals $\frac{d}{w}D$.

7. Since $I(\xtildedelta)= I(Y) + \langle F - ST^{e-w} \rangle$, $\deg \psi_d(Y)$ is the number of intersections of $Y$ with $n$ generic hypersurfaces of weighted degree $d$ and $V(F - ST^{e-w})$ which equals $\frac{d}{w}D \times \frac{e}{d} = \frac{e}{w}D$.
\end{proof}

\begin{cor}[see {\cite[Example 9]{announcement}} and {\cite[Theorem 3.1.5]{preprint}}] \label{iterated-degree-thm}
Let $\delta_0$ be a weighted degree on $A := $ $\kk[x_1, \ldots, x_n]$. Let $k \geq 1$ and for each $i = 1, \ldots, k$, let $\delta_i$ be a semidegree on $A$ obtained by an iteration procedure starting with $\delta_{i-1}$ by means of a polynomial $h_i$ $($ with $\langle \ld_{\delta_{i-1}} (h_i) \rangle$ being prime in $\gr \profinggg{\delta_{i-1}})$ by assigning to the polynomial $h_i$ a weight $w_i$ with $0 < w_i < \delta_{i-1}(h_i)$. Then
\begin{align} \label{iterated-D}
\frac{D}{d^n} = \frac{1}{\delta_0(x_1)\cdots \delta_0(x_n)}\frac{\delta_0(h_1)}{w_1} \cdots \frac{\delta_{k-1}(h_k)}{w_k}\ , \tag{B}
\end{align}
where $D := \deg \xdelta$ and $d \in \zz_+$ are as in theorem \ref{thm-affine-bezout} for $X := \ank$, $A := \kk[x_1, \ldots, x_n]$ and $\delta := \delta_k$.
\end{cor}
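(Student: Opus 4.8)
The plan is to prove the formula \eqref{iterated-D} by induction on $k$, with Example \ref{example-weighted-bezout} furnishing the base case and the (unlabelled) theorem stated immediately before this corollary driving the inductive step. First I would fix, once and for all, a single $d \in \zz_+$ that simultaneously realises the $d$-uple (Veronese) embeddings of all of $X^{\delta_0}, X^{\delta_1}, \dots, X^{\delta_k}$ as closed immersions into projective spaces — a sufficiently divisible $d$ works, there being only finitely many stages — and I would write $D_i$ for the degree of the image of $X^{\delta_i}$ under this embedding, so that $D_i/d^n$ is unambiguous and $D_k/d^n$ is precisely the $D/d^n$ of the statement. Next I would record that the tower is legitimate: $\delta_0$ is a weighted degree, hence a complete, finitely generated semidegree, and, applying assertion \ref{can-iterate-semidegree} of Theorem \ref{iterated-thm} successively for $i = 1,\dots,k$, the standing hypotheses that $\langle\ld_{\delta_{i-1}}(h_i)\rangle$ is prime in $\gr A^{\delta_{i-1}}$ imply that each $\delta_i$ is again a complete, finitely generated semidegree (finite generation being immediate from the presentation $A^{\delta_i} \cong A^{\delta_{i-1}}[s]/\langle (s - h_i)_{(\delta_{i-1})_e(s - h_i)}\rangle$ of assertion \ref{total-isomorphism} of Theorem \ref{iterated-thm}); hence each $X^{\delta_i} = \proj A^{\delta_i}$ is a projective variety and each $D_i$ is defined. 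Finally, for the base case $k = 0$: here $\delta = \delta_0$ is a weighted degree with $\delta_0(x_j) =: d_j > 0$, and Example \ref{example-weighted-bezout} gives $D_0 = d^n/(d_1\cdots d_n)$, i.e. $D_0/d^n = 1/(\delta_0(x_1)\cdots\delta_0(x_n))$, which is \eqref{iterated-D} for $k = 0$.

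For the inductive step, assume \eqref{iterated-D} holds for $\delta_{k-1}$ and apply the preceding theorem with $\delta := \delta_{k-1}$, $f := h_k$, $w := w_k$ (admissible since $0 < w_k < \delta_{k-1}(h_k) =: e$) and $\tilde\delta := \delta_k$. Its only substantive hypothesis is that the ideal $\scrI := \langle (s - h_k)_{(\delta_{k-1})_e(s - h_k)}\rangle$ of $A[s]^{(\delta_{k-1})_e} \cong A^{\delta_{k-1}}[s]$ be prime; but since $\delta_{k-1}$ is a semidegree, assertion \ref{total-isomorphism} of Theorem \ref{iterated-thm} identifies the quotient $A[s]^{(\delta_{k-1})_e}/\scrI$ with $A^{\delta_k}$, which is an integral domain (a $\zz$-graded subring of $A[t, t^{-1}]$, $A$ being a domain), so $\scrI$ is prime. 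The theorem then gives $D_k = \frac{e}{w_k}\,D_{k-1} = \frac{\delta_{k-1}(h_k)}{w_k}\,D_{k-1}$, and therefore, by the inductive hypothesis,
\[
\frac{D}{d^n} \;=\; \frac{D_k}{d^n} \;=\; \frac{\delta_{k-1}(h_k)}{w_k}\cdot\frac{D_{k-1}}{d^n} \;=\; \frac{1}{\delta_0(x_1)\cdots\delta_0(x_n)}\prod_{i=1}^{k}\frac{\delta_{i-1}(h_i)}{w_i}\;,
\]
which is exactly \eqref{iterated-D}.

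The arithmetic is essentially free once this is assembled; the part that needs care — and the main thing I would verify in detail — is the propagation of hypotheses up the tower. Concretely: (i) that completeness (and finite generation) of the semidegree really is preserved by the iteration procedure, so that the preceding theorem genuinely applies at the stage $\delta_{k-1}$ (finite generation is clear from assertion \ref{total-isomorphism} of Theorem \ref{iterated-thm}, which is also what forces the preceding theorem to be used with $\delta$ a semidegree rather than merely a degree-like function, but completeness must still be matched against its precise definition); and (ii) the passage, used above, from the primality of the leading form $\ld_{\delta_{k-1}}(h_k)$ — the hypothesis appearing in the statement — to the primality of the ideal $\scrI$ occurring in the preceding theorem, which follows from assertions \ref{total-isomorphism} and \ref{can-iterate-semidegree} of Theorem \ref{iterated-thm} (the former identifies $A[s]^{(\delta_{k-1})_e}/\scrI$ with $A^{\delta_k}$, the latter identifies primality of $\langle\ld_{\delta_{k-1}}(h_k)\rangle$ with $\delta_k$ being a semidegree) but which deserves to be spelled out. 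One should also confirm that the $d$ and $D$ promised ``as in Theorem \ref{thm-affine-bezout} for $\delta := \delta_k$'' coincide with the $d$ and $D_k$ used here, which is immediate from the definitions.
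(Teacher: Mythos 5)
Your proposal is correct, but it takes a genuinely different route from the paper. You prove the formula by induction on $k$, with Example \ref{example-weighted-bezout} giving the base case $D_0/d^n = 1/(\delta_0(x_1)\cdots\delta_0(x_n))$ and the unlabelled theorem just above the corollary (the one establishing $\tilde D = \frac{e}{w}D$) driving each step; the only substantive checks are that completeness/finite generation propagate up the tower and that the ideal $\scrI_{k} := \langle (s-h_k)_{(\delta_{k-1})_e(s-h_k)}\rangle$ is prime, which you correctly extract from Theorem \ref{iterated-thm}\mathref{total-isomorphism} (since $A[s]^{(\delta_{k-1})_e}/\scrI_k \cong A^{\delta_k}$ is a subring of $A[t,t^{-1}]$, hence a domain). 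The paper's own proof does \emph{not} invoke that unlabelled theorem: it instead unwinds the whole tower in one shot, recursively presenting $A^{\delta_k}$ as $\kk[x_0,\ldots,x_n,s_1,\ldots,s_k]/J_k$ with $J_k$ generated by exactly $k$ elements, so that $X^{\delta_k}$ sits inside the weighted projective space $\pp^{n+k}(\kk;1,d_1,\ldots,d_n,w_1,\ldots,w_k)$ as a complete intersection; it then computes $D$ by intersecting with $n$ generic hypersurfaces and compares against the weighted homogeneous Bezout count in $\kk^{n+k}$ to read off \eqref{iterated-D}. Your inductive argument is shorter and more modular (it delegates all the real work to the already-proved one-step theorem), at the price of relying on that theorem as a black box; the paper's argument is more self-contained and yields, as a by-product, the explicit complete-intersection model of $X^{\delta_k}$ in weighted projective space, which the paper exploits in the subsequent example. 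Both are legitimate. One small point worth spelling out in your version: preservation of completeness under iteration requires not only non-negativity (Theorem \ref{iterated-thm}\mathref{nonnegative-iteration}) and finite generation (from assertion \mathref{total-isomorphism}) but also that $\tilde\delta^{-1}(0)=\kk\setminus\{0\}$; this follows because $\delta_e(G)=0$ with $\delta$ non-negative, $\delta^{-1}(0)=\kk\setminus\{0\}$ and $w>0$ forces $G\in\kk^*$, but it is not merely a matter of ``matching the definition'' as you put it and deserves the one-line argument.
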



\begin{proof}
Let $e_i := \delta_{i-1}(h_i)$ for $1 \leq i \leq k$. According to assertion \ref{semi-iteration} of theorem \ref{iterated-thm}, rings $A^{\delta_i}$ $\cong$ $(A^{\delta_{i-1}}[s_i])^{\delta^e_{i-1}}/\langle (s_i - h_i)_{e_i} \rangle$, where $s_i$ are indeterminates and $\delta^e_{i-1}$ extend $\delta_{i-1}$ by assigning weights $w_i$ to $s_i$. It follows by induction on $i$ with $x_0 = (1)_1$ that
$$A^{\delta_i} = \kk[x_0, \ldots, x_n, s_1, \ldots, s_i]/J_i\ ,$$
where $J_i := \langle \tilde h_1 - x_0^{e_1-w_1}s_1, \ldots, \tilde h_i - x_0^{e_i-w_i}s_i \rangle$, $1 \leq i \leq k$, and $\tilde h_j \in \kk[\bar x, \bar s]$ are weighted homogeneous polynomials in $(\bar x, \bar s) := (x_0, \ldots, x_n, s_1, \ldots, s_{j-1})$ whose equivalence classes in $\profinggg{\delta_i}$ are $(h_j)_{e_j}$, $1 \leq j \leq i$. 

Let $\tilde\delta$ be the weighted degree on $R_k := \kk[x_0, \ldots, x_n, s_1, \ldots, s_k]$ which assigns weight $1$ to $x_0$, $d_i := \delta_0(x_i)$ to $x_i$, $1\leq i \leq n$, and $w_j$ to indeterminates $s_j$, $1 \leq j \leq k$. Then homomorphism $\pi :R_k/J_k \to \profinggg{\delta}$ of graded $\kk$-algebras is surjective, and, therefore, for each $f \in A$, there is a polynomial $\tilde f$ in $R_k$ with $\tilde \delta(\tilde f) = \delta(f)$ and $\tilde f \mapsto (f)_{\delta(f)}$ under homomorphism $\pi$. Moreover, homomorphism $\pi$ induces an embedding of $X^{\delta_k}$ into the weighted projective space $\WP := \pp^{n+k}(\kk;1,d_1,\ldots, d_n, w_1, \ldots, w_k)$. Since $J_k$ is generated by exactly $k$ polynomials in $R_k$, it follows that the image of $\xdelta$ in $\WP$ is a {\em complete intersection}. Identifying $\xdelta$ with its image in $\WP$, it follows that $\xdelta = V(J_k)$ and, therefore, that for any $f_1, \ldots, f_n \in \kk[x_1, \ldots, x_n]$
\begin{gather}\label{generic-preservation}
\bigcap_{i=1}^n \{x \in \ank: f_i(x) = 0\} \subseteq \xdelta \cap V(\tilde f_1) \cap \cdots \cap V(\tilde f_n) =\\
										  V(\tilde h_1-x_0^{e_1-w_1}s_1) \cap \cdots \cap V(\tilde h_k - x_0^{e_k-w_k}s_k) \cap V(\tilde f_1) \cap \cdots \cap V(\tilde f_n).\notag
\end{gather}

Arguing via an embedding of $\WP \into \pp^N(\kk)$ it suffices to choose as $f_j$'s the pull backs of generic linear polynomials on $\pp^N(\kk)$ and then the intersection on the right hand side of the equality in \eqref{generic-preservation} would consist of isolated points in $X := \ank \into \xdelta \into \pp^N(\kk)$ (of multiplicities one and of the total number being the degree of $\xdelta$ in $\pp^N(\kk)$ according to the commonly used geometric definition of degree of a projective variety \cite[Definition 18.1]{harriseometry}). Consequently due to weighted homogeneous Bezout theorem (example \ref{example-weighted-bezout}) on $\affine{n+k}{\kk}$, the sum of the intersection multiplicitiess of Cartier divisors corresponding to $\tilde h_i - x_0^{e_i-w_i}s_i$, $1 \leq i \leq k$, and $\tilde f_j$, $1 \leq j \leq n$, at the points in the left hand side of the equality in \eqref{generic-preservation} is $\frac{\delta(f_1) \cdots \delta(f_n)e_1\cdots e_k}{d_1\cdots d_n w_1\cdots w_k}$, and by the Bezout theorem for semidegrees, the sum of the multiplicities of the fiber $\finv(0)$ at the points in the left hand side of the inclusion in \eqref{generic-preservation} is $\frac{D}{d^n}\delta(f_1)\cdots \delta(f_n)$. Formula \eqref{iterated-D} follows by comparing these two expressions, which completes the proof.
\end{proof}


\begin{example}
Let $f_k := (x_1 + (x_1^2 - x_2^3)^2, (x_1^2 - x_2^3)^k):\affine{2}{\kk} \to \affine{2}{\kk}$. We estimate the size of fibers of $f_k =: (f_{k1}, f_{k2})$ in three different ways. The first one is by means of the weighted homogeneous Bezout formula \eqref{weighted-bezout}. It is straightforward to see that the smallest upper bound given by \eqref{weighted-bezout} for $f_k$ is achieved for $d_1 = 3p$ and $d_2 = 2p$ for some $p \geq 1$, in which case the bound is $\frac{12p \cdot 6kp}{3p \cdot 2p} = 12k$. The second approach we take is via Bernstein's theorem (see section \ref{subsec-background}).  Let $a := (a_1, a_2) \in \affine{2}{\kk}$ and let $\scrP$ and $\scrQ_k$ be the Newton polygons of $x_1 + (x_1^2 - x_2^3)^2 - a_1$ and, respectively, of $(x_1^2 - x_2^3)^k - a_2$. The BKK bound for $|f_k^{-1}(a)|$ for non-zero $a_1, a_2$ is then $2\scrM(\scrP, \scrQ) = \vol(\scrP + \scrQ) - \vol(\scrP) - \vol(\scrQ) =  \frac{1}{2}(2k + 4)(3k+6) - 12 - 3k^2 = 12k$. (A similar argument implies that the BKK bound for $a_1$ or $a_2$ being zero is as well $12k$.)

Let $\delta$ be the iterated semidegree on $\kk[x_1,x_2]$ from example \ref{iterated-semi-baby-example-p2}, so that $\delta(x_1) = 3$, $\delta(x_2) = 2$ and $\delta(x_1^2-x_2^3) = 1$. Then $D/d^n = \frac{6}{3\cdot 2 \cdot 1} = 1$ (theorem \ref{iterated-degree-thm}). The estimate of $|f_k^{-1}(a)|$ given by \eqref{semi-bezout} with this $\delta$ is then $\delta(x_1 + (x_1^2 - x_2^3)^2)\delta((x_1^2 - x_2^3)^k) = 3k$. Moreover, ideals generated by $(f_{k1})_{\delta(f_{k1})}$, $(f_{k2})_{\delta(f_{k2})}$ and $(1)_1$ in $\adelta$ are primary to the irrelevant ideal of $\adelta$ and, therefore, completion $\psi_\delta$ preserves $f_k$ at $\infty$ over all points in $\affine{2}{\kk}$. In other words our bound with iterated semidegree $\delta$ is exact!

\end{example}
\section{Dimension 2 Revisited} \label{sec-dim-2}

\newcommand\divinfinitydeltad[2]{%
	\divinfinitydeltaa{#1}{d_{#2}}%
}

\newcommand{\clambda}{C^{\lambda}}
\newcommand{\glambda}{G^{\lambda}}
\newcommand{\xdelbracket}[1]{X^{\delta^{(m)}}}
  
In this section we continue the exploration of the relation of the number of solutions of a system of polynomials with subdegrees that preserve the system when $\dim X = 2$. At first we settle in this case question \ref{question-quasi-equality} of section \ref{subsec-bezout-subdegree} in the affirmative.

\begin{lemma} \label{semi-ideal-comparison}
If $\delta$ is a degree like function and $\eta$ is a semidegree such that $\delta \geq \eta$, then the ideal $\ppp_{\delta, \eta}$ of $\adelta$ generated by $\{(f)_d : d > \eta(f)\}$ is a prime ideal of $\adelta$. Moreover, if $\delta$ is a subdegree and $\eta$ is not an associated semidegree of $\delta$, then $V(\ppp_{\delta, \eta})$ has codimension at least $2$ in $\xdelta$.
\end{lemma}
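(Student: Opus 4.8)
The plan is to prove the two assertions separately: the first by a contraction-of-a-prime argument, and the second by showing that if $V(\ppp_{\delta,\eta})$ had codimension one in $\xdelta$, then $\eta$ would be forced to coincide with one of the associated semidegrees of $\delta$.

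For primality, observe that $\delta \geq \eta$ forces $F_d^\delta \subseteq F_d^\eta$ for every $d$, so there is an evident graded inclusion of $\kk$-algebras $\iota : \adelta \hookrightarrow A^\eta$ with $\iota((f)_d) = (f)_d$. I would first check that $\ppp_{\delta,\eta} = \iota^{-1}\bigl((1)_1 A^\eta\bigr)$, and in fact that its degree-$d$ component equals $\{(f)_d \in \adelta : \eta(f) < d\}$: each generator $(f)_d$ (with $d > \eta(f)$) equals $(1)_1 (f)_{d-1}$ with $(f)_{d-1} \in A^\eta$, and conversely every homogeneous degree-$d$ element of $(1)_1 A^\eta$ lying in $\adelta$ has the form $(1)_1 (g)_{d-1}$ with $\eta(g) \leq d-1$ and $\delta(g) \leq d$, hence is one of these generators. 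Since $\eta$ is a semidegree, $(1)_1$ generates a prime ideal of $A^\eta$ (theorem \ref{gengfsgf-characterization}), and the contraction of a prime is prime; moreover $\ppp_{\delta,\eta}$ is proper, as its degree-zero component vanishes. This settles the first claim.

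For the codimension statement I would first record $(1)_1 \adelta \subseteq \ppp_{\delta,\eta}$ (since $d \geq \delta(f)+1 > \eta(f)$ whenever $(f)_d \in (1)_1\adelta$), so that $V(\ppp_{\delta,\eta}) \subseteq V\bigl((1)_1\bigr) = X_\infty$ in $\xdelta$, while $\ppp_{\delta,\eta}$ lies in the irrelevant ideal of $\adelta$. As $\adelta$ is a domain, $X_\infty$ is an effective Cartier divisor on the variety $\xdelta$, of pure codimension one, whose irreducible components are exactly the $V_1, \dots, V_N$ attached to the minimal presentation $\delta = \max_{1 \leq j \leq N} \delta_j$. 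Since $V(\ppp_{\delta,\eta})$ is irreducible by the first part, it has codimension $\geq 2$ in $\xdelta$ unless it equals some $V_j$; so it suffices to show that $V(\ppp_{\delta,\eta}) = V_j$ forces $\eta = \delta_j$, which contradicts $\eta$ not being an associated semidegree. In that case $\ppp_{\delta,\eta}$ and the homogeneous prime $\ppp_j$ of $V_j$ are prime with the same zero locus, so $\ppp_{\delta,\eta} = \ppp_j$; applying lemma \ref{gensgf-characterize-lemma} to chart generators $g_k t^{l_k}$ and extending multiplicatively (via the valuation of the discrete valuation ring $\sheaf_{V_j, \xdelta}$, as in the proof of lemma \ref{divisor-at-infinity}) one identifies $\ppp_j = \{(f)_d \in \adelta : \delta_j(f) < d\}$, whereas $\ppp_{\delta,\eta} = \{(f)_d \in \adelta : \eta(f) < d\}$ by the computation above. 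Comparing these descriptions in degree $d = \delta(f)$ shows that the conditions $\eta(f) = \delta(f)$ and $\delta_j(f) = \delta(f)$ cut out one and the same subset $T \subseteq A$.

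The hard part will be upgrading ``$\eta$ and $\delta_j$ attain the value $\delta$ on exactly the set $T$'' to ``$\eta = \delta_j$''. Here my plan is: fix $f \in A$; by minimality of the presentation there is $g_0 \in A$ with $\delta_j(g_0) > \delta_i(g_0)$ for all $i \neq j$, so $g_0 \in T$; since each $\delta_i$ is a semidegree, $\delta_j(g_0^m) - \delta_i(g_0^m) = m\bigl(\delta_j(g_0) - \delta_i(g_0)\bigr)$ exceeds $\delta_j(f) - \delta_i(f)$ for all $i \neq j$ once $m$ is large, whence $\delta(f g_0^m) = \delta_j(f g_0^m)$, i.e.\ $f g_0^m \in T$; then additivity of $\eta$ and of $\delta_j$ on products, together with $\eta(g_0^m) = \delta_j(g_0^m)$ (from $g_0 \in T$) and $\eta(f g_0^m) = \delta(f g_0^m) = \delta_j(f g_0^m)$ (from $f g_0^m \in T$ and the common-set property), force $\eta(f) = \delta_j(f)$; as $f$ is arbitrary, $\eta = \delta_j$. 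I expect the only genuinely delicate point to be extracting the clean identity $\ppp_j = \{(f)_d : \delta_j(f) < d\}$ from lemma \ref{gensgf-characterize-lemma} and proposition \ref{pole-and-degree}, since the whole second assertion hinges on it.
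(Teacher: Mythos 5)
Your proof is correct, and except for the packaging of the primality argument it follows the same route as the paper. For primality the paper directly verifies the graded prime condition: if $(f_1)_{d_1},(f_2)_{d_2}\notin\ppp_{\delta,\eta}$ then $\eta(f_i)=d_i$, so $\eta(f_1f_2)=d_1+d_2$ (since $\eta$ is a semidegree), hence $(f_1f_2)_{d_1+d_2}\notin\ppp_{\delta,\eta}$. You instead contract the prime $\langle (1)_1\rangle\subseteq A^\eta$ along the graded inclusion $\adelta\hookrightarrow A^\eta$; both arguments ultimately use the same fact (that $\eta$ being a semidegree is equivalent to $\langle(1)_1\rangle$ being prime in $A^\eta$, via theorem \ref{gengfsgf-characterization}), so this is a genuine but minor repackaging. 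Your codimension argument is essentially identical to the paper's: the paper also reduces to the claim that $\ppp_{\delta,\eta}=\ppp_j$ forces $\eta=\delta_j$, chooses $g_0$ with $\delta_j(g_0)>\delta_i(g_0)$ for $i\neq j$ by minimality of the presentation, observes $\delta(f g_0^m)=\delta_j(f g_0^m)$ for $m$ large, and exploits additivity of $\eta$ and $\delta_j$ to conclude $\eta(f)=\delta_j(f)$; your reformulation via the common set $T$ is just a restatement. One small slip in your writeup: the threshold that $m(\delta_j(g_0)-\delta_i(g_0))$ must exceed is $\delta_i(f)-\delta_j(f)$, not $\delta_j(f)-\delta_i(f)$ as you wrote; the subsequent conclusion is unaffected. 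The ``delicate point'' you flag, namely that $\ppp_j=\{(f)_d\in\adelta:\delta_j(f)<d\}$, is indeed exactly what the paper extracts from lemma \ref{gensgf-characterize-lemma}, so you have correctly located the dependency rather than leaving a gap.
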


\begin{proof}
Let $L := \{(f)_d : d > \eta(f)\}$. Since $\eta \leq \delta$, it follows that if $(f)_d \in L$ and $(g)_e \in \adelta$, then $(f)_d(g)_e = (fg)_{d+e} \in L$. This implies that $L$ is precisely the set of homogeneous elements of $\ppp_{\delta, \eta}$. Therefore, if $(f_1)_{d_1}, (f_2)_{d_2} \in \adelta \setminus \ppp_{\delta, \eta}$, then $\eta(f_i) = d_i$ for each $i$, $1 \leq i \leq 2$. It follows that $\eta(f_1f_2) = d_1 + d_2$ and hence $(f_1f_2)_{d_1d_2} \in \adelta \setminus \ppp_{\delta, \eta}$. Therefore $\ppp_{\delta, \eta}$ is prime. 

Now assume $\delta$ is a subdegree with minimal presentation $\delta = \max\{\delta_j: 1 \leq j \leq N\}$. Let $\ppp_j$ be the prime ideal of $\adelta$ corresponding to $\delta_j$. Since $(1)_1 \in \ppp_{\delta, \eta}$, it follows that $\ppp_{\delta, \eta} \supseteq \ppp_j$ for some $j$. 

\begin{claim}\label{semi-prime-comparison}
For each $j$, $1 \leq j \leq N$, $\ppp_{\delta, \eta} = \ppp_j$ iff $\eta = \delta_j$.
\end{claim}

\begin{proof}
The `if' direction follows directly from assertion \ref{part1-gensgf-characterize-lemma} of lemma \ref{gensgf-characterize-lemma}. We now show the `only if' direction. Assume $\ppp_{\delta, \eta} = \ppp_j$. There exists $(f)_d \in \adelta \setminus \ppp_j$ such that $\delta_j(f) > \delta_i(f)$ for all $i \neq j$. Let $g \in A$. Then there exists $k \in \nn$ such that $\delta_j(f^kg) = \delta(f^kg)$. Since $\ppp_{\delta, \eta} = \ppp_j$, it follows that $\eta(f) = \delta(f)$ and $\eta(f^kg) = \delta(f^kg)$. Therefore $\eta(g) = \eta(f^kg) - \eta(f^k) = \delta(f^kg) - \delta(f^k) = \delta_j(f^kg) - \delta_j(f^k) = \delta_j(g)$.
\end{proof}

By the above claim it follows that $\ppp_{\delta, \eta} \supsetneq \ppp_j$ for all $j$, which completes the proof of the lemma.
\end{proof}

{\bf Do you need $X$ to be normal to ensure that $[\divclosuredelta[g]_X(g)]$ is an effective CARTIER divisor??} I do not think so - look at the normalization $\overline {\xdeltaa{g}}$ of $X$. If $[\divclosuredelta[g]_X(g)]$ is not effective as a Cartier divisor on $\overline{\xdeltaa{g}}$ (i.e. at some point its local equation is not regular), then it necessarily has at least one pole $Y$ and it has to be contained in $\overline {\xdeltaa{g}}\setminus X$. Then the image of $Y$ is one of the components $V_j$ of infinity in $\xdeltaa{g}$. It follows that $\sheaf_{Y,\overline {\xdeltaa{g}}} \supseteq \sheaf_{V_j,\xdeltaa{g}}$ and hence by maximality of discrete valuation rings $\sheaf_{Y,\overline {\xdeltaa{g}}} = \sheaf_{V_j,\xdeltaa{g}}$. It follows that order of vanishing of the local equation of $[\divclosuredelta[g]_X(g)]$ along $Y$ is $0$ - a contradiction. 

\begin{prop} \label{delta-rational-map}
Let $\delta = \max\{\delta_j: 1 \leq j \leq N\}$ and $g \in A$ such that $\delta_j(g) > 0$ for all $j$ and both $\delta$ and $\delta_g$ are finitely generated. Pick positive integers $r \leq N$ and $m_1, \ldots, m_r$ such that $\delta'$ is a finitely generated subdegree with minimal presentation $\delta' := \max\{m_j\delta_j:1 \leq j \leq r\}$. Let $\phi: X^{\delta'} \to \xdeltaa{g}$ be the birational map induced by identification of $X$ and let $S \subseteq X^{\delta'}$ be the set of points of indeterminacy of $\phi$. Then for all $j$, $1 \leq j \leq r$, $\phi(V'_j \setminus S) \subseteq V(\ppp_{\delta_g, \tilde\delta_j})$ where $V'_j$ is the component of the hypersurface at infinity of $X^{\delta'}$ corresponding to $m_j\delta_j$, $\tilde\delta_j := \frac{e_g}{\delta_j(g)}\delta_j$ and $\ppp_{\delta_g, \tilde\delta_j}$ is as defined in lemma \ref{semi-ideal-comparison}.
\end{prop}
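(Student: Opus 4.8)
The plan is to reduce the statement to a property of the generic point $\xi'_j$ of $V'_j$ and then verify it by a short computation with the discrete valuation attached to $V'_j$. Recall from proposition \ref{pole-and-degree} that $\sheaf_{V'_j,X^{\delta'}}$ is a discrete valuation ring whose valuation restricts on $A$ to a positive rational multiple of $-\delta_j$, so the multiplier $m_j$ only rescales and does not affect the valuation ring. Since $\delta_j$ is a semidegree it extends to $\kk(X)$ by $\delta_j(a/b):=\delta_j(a)-\delta_j(b)$, and with $\sheaf_{\delta_j}:=\{z\in\kk(X):\delta_j(z)\le 0\}$ we get $\sheaf_{V'_j,X^{\delta'}}=\sheaf_{\delta_j}$, a ring independent of $m_j$. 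Now $X^{\delta_g}=\proj A^{\delta_g}$ is complete (its coordinate ring is a finitely generated graded $\kk$-algebra because $\delta_g$ is finitely generated) and $\sheaf_{\delta_j}$ is a discrete valuation ring with fraction field $\kk(X)=\kk(X^{\delta'})=\kk(X^{\delta_g})$, so by the valuative criterion the birational map $\phi$ is defined at $\xi'_j$. Hence $\xi'_j\notin S$, the set $V'_j\setminus S$ is a dense open subvariety of $V'_j$ with generic point $\xi'_j$, and $\phi(V'_j\setminus S)\subseteq\overline{\{y\}}$ where $y:=\phi(\xi'_j)$; the valuative criterion also yields the local inclusion $\sheaf_{y,X^{\delta_g}}\subseteq\sheaf_{\delta_j}$. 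Since $V(\ppp_{\delta_g,\tilde\delta_j})$ is closed, it now suffices to prove $y\in V(\ppp_{\delta_g,\tilde\delta_j})$, i.e.\ $\ppp_{\delta_g,\tilde\delta_j}\subseteq\ppp_y$, where $\ppp_y$ is the homogeneous prime of $A^{\delta_g}$ corresponding to $y$.

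For the main step I would argue by contradiction. By lemma \ref{semi-ideal-comparison} the homogeneous elements of $\ppp_{\delta_g,\tilde\delta_j}$ are exactly the $(f)_d\in A^{\delta_g}$ with $d>\tilde\delta_j(f)=\tfrac{e_g}{\delta_j(g)}\delta_j(f)$, so it is enough to check that every such $(f)_d$ lies in $\ppp_y$. If some $(f)_d$ with $d>\tilde\delta_j(f)$ were not in $\ppp_y$, then $y\in D_+((f)_d)$, hence $A^{\delta_g}_{((f)_d)}\subseteq\sheaf_{y,X^{\delta_g}}\subseteq\sheaf_{\delta_j}$; unravelling the degree-zero localization, every element of $A^{\delta_g}_{((f)_d)}$ equals $h/f^k$ for some $h\in A$ and $k\ge 0$ with $\delta_g(h)\le kd$, so $\delta_j(h)-k\delta_j(f)=\delta_j(h/f^k)\le 0$ for all such $h,k$. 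Specializing $h:=g^N$ and using that $\delta_g$ is a subdegree with $\delta_g(g)=e_g$, so $\delta_g(g^N)=Ne_g$ (corollary \ref{sgf-reformulation}), this gives $N\delta_j(g)\le k\delta_j(f)$ whenever $Ne_g\le kd$. If $\delta_j(f)>0$, the hypothesis $d>\tilde\delta_j(f)$ is just $e_g/d<\delta_j(g)/\delta_j(f)$, so one may choose integers $k,N\ge 1$ with $e_g/d<k/N<\delta_j(g)/\delta_j(f)$; then $Ne_g<kd$, so $N\delta_j(g)\le k\delta_j(f)$, i.e.\ $\delta_j(g)/\delta_j(f)\le k/N$, a contradiction. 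In the remaining case $\delta_j(f)=0$ the element $f$ is a nonzero constant, so $(f)_d=f\cdot((1)_1)^d$ and the claim reduces to $y\in V((1)_1)=X^{\delta_g}_\infty$; but $y$ cannot lie in the affine chart $X=\spec A$, since that would give $A\subseteq\sheaf_{y,X^{\delta_g}}\subseteq\sheaf_{\delta_j}$ and hence $\delta_j(g)\le 0$, against $\delta_j(g)>0$. This proves $\ppp_{\delta_g,\tilde\delta_j}\subseteq\ppp_y$ and finishes the argument.

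I expect the only genuinely delicate points to be the two invocations of the valuative criterion in the first paragraph --- that $\phi$ is actually a morphism at the codimension-one point $\xi'_j$ (so $V'_j\not\subseteq S$) and that its value there is the centre of $\sheaf_{\delta_j}$ on $X^{\delta_g}$ --- together with the identification $\sheaf_{V'_j,X^{\delta'}}=\sheaf_{\delta_j}$ (independent of $m_j$) and the bookkeeping for the degree-zero localization $A^{\delta_g}_{((f)_d)}$ viewed inside $\kk(X)$. Once those are in place, the remaining arithmetic comparing the rationals $e_g/d$ and $\delta_j(g)/\delta_j(f)$ is immediate, and the argument uses neither normality of $X$ nor any effectivity statement for the divisors flagged above.
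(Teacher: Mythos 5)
Your approach is genuinely different from the paper's and, I think, structurally cleaner. The paper picks an arbitrary $x\in\phi(V'_j\setminus S)\setminus V(\ppp_{\delta_g,\tilde\delta_j})$, writes the local equation of the $kd$-uple divisor at infinity near $x$ as $1/f^k$, and derives a contradiction from the fact that $kd[\divclosuredelta[g]_X(g)]=[\divdelta[g](g^{kd})]+e_g[\divinfinitydeltaa{g}{kd}]$ is an \emph{effective Cartier} divisor whose local equation $g^{kd}/f^{ke_g}$ would then have $\delta_j>0$, i.e.\ a pole along $V'_j$. That route depends on the effectivity of $\divclosuredelta[g]_X(g)$ as a Cartier divisor, which the paper itself flags as a possible sticking point just before this proposition. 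You instead reduce to the generic point $\xi'_j$, use the valuative criterion to place $\phi$ at $\xi'_j$ with $\sheaf_{y,X^{\delta_g}}\subseteq\sheaf_{\delta_j}$, and then read the containment $\ppp_{\delta_g,\tilde\delta_j}\subseteq\ppp_y$ directly off the degree-zero localization $A^{\delta_g}_{((f)_d)}\subseteq\sheaf_{\delta_j}$. The underlying arithmetic (evaluating $\delta_j$ on ratios $g^N/f^k$) is the same in both proofs, but your framing sidesteps the Cartier-effectivity question entirely, which is a real improvement. The valuative-criterion step you flag as delicate is fine: since $\sheaf_{V'_j,X^{\delta'}}$ is a DVR (by proposition \ref{pole-and-degree}, independently of the rescaling by $m_j$) and $X^{\delta_g}$ is proper, a rational map is always defined at a codimension-one point with DVR local ring; no normality of $X^{\delta'}$ is needed.

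There is, however, one genuine slip in the second paragraph. You split into $\delta_j(f)>0$ and $\delta_j(f)=0$, and in the latter case you assert that $f$ must be a nonzero constant. That does not follow: the associated semidegrees $\delta_j$ of a complete subdegree $\delta=\max_j\delta_j$ need not themselves be non-negative or complete, so $\delta_j(f)=0$ (or even $\delta_j(f)<0$, a case you omit) can occur for non-constant $f$; only $\delta(f)=0$ forces $f\in\kk^*$. Fortunately the same arithmetic that handles $\delta_j(f)>0$ covers $\delta_j(f)\le 0$ more easily: since $d\ge 1$ (if $d=0$ then $f\in F_0^{\delta_g}=\kk^*$, forcing $\tilde\delta_j(f)=0<d$, a contradiction), one may take $N=1$, $k=e_g$, so that $Ne_g\le kd$; the resulting inequality $N\delta_j(g)\le k\delta_j(f)\le 0$ contradicts $\delta_j(g)>0$ directly. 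Replacing your constant-$f$ detour with this observation closes the gap and makes the argument uniform in the sign of $\delta_j(f)$.
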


\begin{proof}
Assume contrary to the proposition that there exists $j$, $1 \leq j \leq r$, such that $\phi(V'_j \setminus S) \subsetneq V(\ppp_{\delta_g, \tilde\delta_j})$. Pick $x \in \phi(V'_j \setminus S) \setminus V(\ppp_{\delta_g, \tilde\delta_j})$. Let $(f)_d \in \ppp_{\delta_g, \tilde\delta_j}$ such that $x \not\in V((f)_d)$. 

Note that $x \in \xdeltaa{g}\setminus X = V((1)_1)$ and therefore for a suitable positive integer $k$, we may assume that the local equation of the $kd$-uple divisor $\divinfinitydeltaa{g}{kd}$ at infinity of $\xdeltaa{g}$ on a neighborhood $U$ of $x$ is $\frac{1}{f^k}$. According to assertion \ref{div-g-d-delta-g-d} of lemma \ref{divisor-at-infinity}, $kd[\divclosuredelta[g]_X(g)] = [\divdelta[g](g^{kd})] + e_g[\divinfinitydeltaa{g}{kd}]$ is a Cartier divisor on $\xdeltaa{g}$. By construction the local equation of $kd[\divclosuredelta[g]_X(g)]$ on $U$ is $\frac{g^{kd}}{f^{ke_g}}$. Note that
\begin{align*}
\delta_j(\frac{g^{kd}}{f^{ke_g}}) 	&= kd\delta_j(g) - ke_g\delta_j(f) \\
									&= k\delta_j(g)(d - \frac{e_g\delta_j(f)}{\delta_j(g)}) \\
									&= k\delta_j(g)(d - \tilde\delta_j(f))
\end{align*}
By assumption on $\delta$, $\delta_j(g) > 0$. Moreover $d > \tilde\delta_j(f)$, since $(f)_d \in \ppp_{\delta_g, \tilde\delta_j}$. It follows that $\delta_j(\frac{g^{kd}}{f^{ke_g}}) > 0$ and hence $\frac{g^{kd}}{f^{ke_g}}$ has a pole at $V'_j$, so that the pullback of $kd[\divclosuredelta[g]_X(g)]$ to $X^{\delta'}\setminus S$ is not effective. But this is impossible, since $kd[\divclosuredelta[g]_X(g)]$ is an effective Cartier divisor. This contradiction proves the proposition.
\end{proof}

\begin{prop} \label{pullback-cartier-function-closure}
Let $X$, $\delta$ and $g$ be as in proposition \ref{delta-rational-map}. Moreover assume $\dim X = 2$. Let $\lambda^i_j$, $1 \leq i \leq k$, $1 \leq j \leq N$, be positive integers and $X^\eta$ be the closure of the diagonal embedding of $X$ into $\xdeltaa{g} \times X^{\delta^1} \times \cdots \times X^{\delta^k}$, where $\delta^i := \max\{\lambda^i_j\delta_j: 1 \leq j \leq N\}$ for $1 \leq i \leq k$. Then $\pi^*(\divclosuredelta[g]_X(g)) = \divclosure[\eta]_X(g)$, where $\pi$ is the projection in the first coordinate.
\end{prop}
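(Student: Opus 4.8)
The plan is to verify the equality $\pi^*(\divclosuredelta[g]_X(g)) = \divclosure[\eta]_X(g)$ one prime divisor of $X^\eta$ at a time. First I would note that $\pi$ is an isomorphism over $X$, and that by Lemma~\ref{divisor-at-infinity}(\ref{div-g-d-delta-g-d}) the divisor $d\,[\divclosuredelta[g]_X(g)] = [\divdelta[g](g^{d})] + e_g[D^{\delta_g}_{d,\infty}]$ is Cartier, so $\divclosuredelta[g]_X(g)$ is $\qq$-Cartier and $\pi^*(\divclosuredelta[g]_X(g))$ is defined. Both $\pi^*(\divclosuredelta[g]_X(g))$ and $\divclosure[\eta]_X(g)$ restrict to $\Div_X(g)$ on $X$, hence they have the same coefficient along every prime divisor of $X^\eta$ meeting $X$ (compute at the generic point, which lies in $X$); and $\divclosure[\eta]_X(g)$ has, by definition, no component supported on $X^\eta_\infty$. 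Thus the proposition reduces to the claim $\ord_W(\pi^*(\divclosuredelta[g]_X(g))) = 0$ for every irreducible component $W$ of $X^\eta_\infty$.

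Next I would compute a local equation of $\divclosuredelta[g]_X(g)$ near $X^{\delta_g}_\infty$. Since $\delta_g(g) = e_g$, the relation of Lemma~\ref{divisor-at-infinity}(\ref{div-g-d-delta-g-d}) shows that on the affine chart $U_j = \spec A^{\delta_g}_{(g_jt^{l_j})}$ of $X^{\delta_g}$ the divisor $\divclosuredelta[g]_X(g)$ is cut out (after passing to a suitable multiple to make the exponents integral) by the degree-zero element $\theta_j := (g)_{e_g}/(g_jt^{l_j})^{e_g/l_j}$, whose denominator is a unit on $U_j$. Consequently a point $Q \in X^{\delta_g}_\infty\cap U_j$ lies on $\divclosuredelta[g]_X(g)$ if and only if $(g)_{e_g}$ vanishes at $Q$.

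Then I would identify $X^\eta$ with the completion $X^{\delta''}$, $\delta'' := \max\{\delta_g,\delta^1,\dots,\delta^k\}$, so that $\pi$ is identified with the birational map of Proposition~\ref{delta-rational-map}, and split into cases according to the semidegree $\tilde\delta_l := \frac{e_g}{\delta_l(g)}\delta_l$ attached to a given component $W$ of $X^\eta_\infty$. If $\tilde\delta_l$ is an associated semidegree of $\delta_g$, then $\pi$ maps $W$ dominantly onto the corresponding component $V^0_l$ of $X^{\delta_g}_\infty$ (the divisorial valuations of $W$ and $V^0_l$ are both proportional to $-\delta_l$); since $\divclosuredelta[g]_X(g)$ — being a sum of closures of divisors meeting $X$ — has coefficient $0$ along $V^0_l$, its local equation is a unit at the generic point of $V^0_l$, hence its pullback is a unit at the generic point of $W$ and $\ord_W(\pi^*(\divclosuredelta[g]_X(g))) = 0$. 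If $\tilde\delta_l$ is \emph{not} an associated semidegree of $\delta_g$, then by Lemma~\ref{semi-ideal-comparison} the ideal $\ppp_{\delta_g,\tilde\delta_l}$ is prime of codimension $\geq 2$ in the surface $X^{\delta_g}$, whence $V(\ppp_{\delta_g,\tilde\delta_l})$ is a single point $P$ and $\ppp_{\delta_g,\tilde\delta_l}$ is its homogeneous prime ideal; by Proposition~\ref{delta-rational-map}, $\pi$ contracts $W$ (off its locus of indeterminacy) into $\{P\}$. Since $\tilde\delta_l(g) = e_g$, the homogeneous element $(g)_{e_g}$ of degree $e_g$ lies outside $\ppp_{\delta_g,\tilde\delta_l}$ — whose homogeneous elements are exactly the $(f)_d$ with $d > \tilde\delta_l(f)$, by the proof of Lemma~\ref{semi-ideal-comparison} — i.e.\ $(g)_{e_g}(P)\neq 0$; hence $\theta_j$ is a unit at $P$, its pullback is a unit along $W$, and $\ord_W(\pi^*(\divclosuredelta[g]_X(g))) = 0$ once more. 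This exhausts the components of $X^\eta_\infty$ and gives $\pi^*(\divclosuredelta[g]_X(g)) = \divclosure[\eta]_X(g)$.

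The step I expect to be the main obstacle is the identification of $X^\eta$ (the closure of the diagonal) with $X^{\delta''}$ and the matching of the components of $X^\eta_\infty$ with the semidegrees $\tilde\delta_l$ — this is what lets Proposition~\ref{delta-rational-map} be applied to $\pi$ — together with keeping track of which components get contracted and to which loci. This is also the only place the hypothesis $\dim X = 2$ is needed, namely to turn ``prime ideal of codimension $\geq 2$'' into ``homogeneous prime ideal of a point''. The rest is routine: the $\qq$-Cartier and integrality juggling with the $d$-uple divisor at infinity, and the elementary point that $V(\ppp_{\delta_g,\tilde\delta_l})\cap\supp(\divclosuredelta[g]_X(g)) = \emptyset$ because the local equation $(g)_{e_g}$ of $\divclosuredelta[g]_X(g)$ at infinity is kept out of $\ppp_{\delta_g,\tilde\delta_l}$ by the defining relation $\tilde\delta_l(g) = e_g$.
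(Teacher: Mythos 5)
Your proof is correct and follows essentially the same route as the paper's: the heart of the matter in both cases is the combination of Lemma \ref{semi-ideal-comparison} (which, together with $\dim X = 2$, forces $V(\ppp_{\delta_g,\tilde\delta_l})$ to be a single point when $\tilde\delta_l$ is not an associated semidegree of $\delta_g$), Proposition \ref{delta-rational-map} (which contracts the offending component into that point), and the observation that $\tilde\delta_l(g) = e_g$ keeps $(g)_{e_g}$ out of $\ppp_{\delta_g,\tilde\delta_l}$ so that the point lies off $\supp\bigl(\divclosuredelta[g]_X(g)\bigr)$. The only real divergence is organizational: the paper argues by contradiction, writing $\pi^*(\divclosuredelta[g]_X(g)) = \divclosure[\eta]_X(g) + E$ and showing $E=\emptyset$, and it applies Proposition \ref{delta-rational-map} to a carefully chosen factor $\pi_j\colon X^\eta\to X^{\delta^j}$ onto which a hypothetical component of $E$ projects one-dimensionally, rather than invoking an identification $X^\eta\cong X^{\delta''}$ with $\delta'' := \max\{\delta_g,\delta^1,\dots,\delta^k\}$. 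Your direct ``$\ord_W = 0$ for every component $W$ of $X^\eta_\infty$'' version is a clean repackaging of the same argument; your Case 1 spells out a step the paper leaves implicit (why a component of $E$ cannot map dominantly onto a component of $X^{\delta_g}_\infty$), while your use of $X^\eta\cong X^{\delta''}$ quietly assumes finite generation of $\delta''$ and that the closure of the diagonal realizes $\proj$ of the intersection filtration --- both of which the paper's factor-by-factor approach sidesteps.
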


\begin{proof}
Let ${\overline{V(g)}}^{\delta_g} \cap \xdeltaa{g}\setminus X = \{x_1, \ldots, x_r\}$, where ${\overline{V(g)}}^{\delta_g}$ is the closure in $\xdeltaa{g}$ of $V(g) \subseteq X$. Now, $[\pi^*(\divclosuredelta[g]_X(g))] = [\divclosure[\eta]_X(g)] + E$, where $\supp E \subseteq X^\eta \setminus X$. Assume $E \not= \emptyset$. Then $E$ is of the form $\sum_{j=1}^s m_j[V_j]$ such that for each $j$, $1 \leq j \leq s$, $\pi(V_j) = x_{i_j}$ for some $i_j$, $1 \leq i_j \leq r$. WLOG we may assume $ i_1 = 1$. Since $\dim(V_1) = 1$, there exists $j$, $1 \leq j \leq k$, such that $\dim(\pi_j(V_1))$ is also $1$, where $\pi_j: X^\eta \to X^{\delta^j}$ is the natural projection map. WLOG assume $j=1$ and let $\phi: X^{\delta^1} \to \xdeltaa{g}$ be the birational map induced by the identification of $X$ in both spaces and $S \subseteq X^{\delta^1}$ be the set of points of indeterminacy of $\phi$. Since $\pi \equiv \phi \circ \pi_1$ on $X^\eta \setminus \pi_1^{-1}(S)$, it follows that $\phi(\pi_1(V_1)\setminus S) = \{x_1\}$.

WLOG we may order $\delta_1, \ldots, \delta_N$ in a way that $\delta^1$ has minimal presentation $\delta^1 = \max\{\lambda^1_j\delta_j: 1 \leq j \leq M_1\}$ for some $M_1 \leq N$ and $\pi_1(V_1)$ is the component of the hypersurface at infinity of $X^{\delta'}$ corresponding to $\lambda^1_1\delta_1$. Henceforth we write $V'_1$ for $\pi_1(V_1)$. Let $\tilde\delta_j := \frac{e_g}{\delta_j(g)}\delta$ for $1 \leq j \leq N$. Then $\delta_g = \max\{\tilde\delta_1, \ldots, \tilde\delta_N\}$. 

\begin{claim*}
$\tilde\delta_1$ is not an associated semidegree of $\delta_g$.
\end{claim*}

\begin{proof}
Assume $\tilde\delta_1$ is an associated semidegree of $\delta_g$. Then according to claim \ref{semi-prime-comparison} $\ppp_{\delta_g, \tilde\delta_1} = \tilde \ppp_1$, where $\tilde \ppp_1$ is the prime ideal of $\adeltaa{g}$ corresponding to $\tilde\delta_1$. Proposition \ref{delta-rational-map} then implies that $\phi(V'_1 \setminus S) \subseteq \tilde V_1 := V(\tilde \ppp_1)$. Since $\tilde \delta_1$ and $\lambda^1_1\delta_1$ induces the same discrete valuation $\nu$ on $\kk(X)$, it follows due to proposition \ref{pole-and-degree} that $\sheaf_{V'_1, X^{\delta^1}} = \sheaf_{\tilde V_1, \xdeltaa{g}}$, both of these rings being same as the valuation ring of $\nu$. It follows that $\overline{\phi(V'_1 \setminus S)} = \tilde V_1$, which is absurd, since $\phi(V'_1 \setminus S) = \{x_1\}$. This contradiction proves the claim.
\end{proof}

Since $\tilde\delta_1$ is not an associated semidegree of $\delta_g$, lemma \ref{semi-ideal-comparison} implies that $V(\ppp_{\delta_g, \tilde \delta_1})$ has codimension $2$ in $\xdeltaa{g}$. Since $V(\ppp_{\delta_g, \tilde \delta_1})$ is also irreducible, it follows that $V(\ppp_{\delta_g, \tilde \delta_1})$ is a single point. Let $x$ be the sole element of $V(\ppp_{\delta_g, \tilde \delta_1})$. 

\begin{claim*}
$x \not\in {\overline{V(g)}}^{\delta_g}$.
\end{claim*}

\begin{proof}
Since $\tilde\delta_j(g) = e_g$ for all $j$, $1 \leq j \leq N$, it follows that ${\overline{V(g)}}^{\delta_g} = V((g)_{e_g})$ (remark \ref{sub-all-equal-degree}). Moreover, $\tilde\delta_1(g) = e_g = \delta_g(g)$, so that $(g)_{e_g} \not \in \ppp_{\delta_g, \tilde \delta_1}$. Therefore $x \not\in V((g)_{e_g}) = {\overline{V(g)}}^{\delta_g}$, as required.
\end{proof}

According to proposition \ref{delta-rational-map} $\{x_1\} = \phi(V'_1\setminus S) \subseteq V(\ppp_{\delta_g, \delta_1}) = \{x\}$. But this contradicts the above claim. It follows that $E = \emptyset$ and hence $[\pi^*(\divclosuredelta[g]_X(g))] = [\divclosure[\eta]_X(g)]$, as required.
\end{proof}

\begin{cor}\label{quasi-affine-two-bezout-equality}
Assume $\dim X = 2$ and that $f := (f_1, f_2):X \to \affine{2}{\kk}$, $\delta := \max\{\delta_j: 1 \leq j \leq N\}$ and $d_{f_1}, d_{f_2} \in \nn$ are as in theorem \ref{quasi-affine-bezout-prelim}. If $\xdelta$ preserves $\{f_1, f_2\}$ at $\infty$ then for almost all $a \in X$, \eqref{quasi-bezout} holds with an equality.
\end{cor}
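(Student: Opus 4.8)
The plan is to re-run the estimate of Theorem~\ref{quasi-affine-bezout-prelim} and check that, under the hypothesis, the intersection number on the relevant completion is entirely concentrated on $\finv(a)$. Fix $a := (a_1, a_2) \in \affine{2}{\kk}$. Because $\delta$ is non-negative, $\delta^{-1}(0) = \kk\setminus\{0\}$ and $\delta_j(f_i) > 0$, one has $\delta_j(f_i - a_i) = \delta_j(f_i)$ for all $i, j$, hence $\delta_{f_i - a_i} = \delta_{f_i}$ and the integers $e_{f_i}, d_{f_i}$ and the space $\xdeltaa{f_i}$ serve $f_i - a_i$ as well. Let $X^\eta$, $\bar X$ and $\pi_i\colon X^\eta \to \xdeltaa{f_i}$ be as in the proof of Theorem~\ref{quasi-affine-bezout-prelim}, and set $E_i^a := \pi_i^*(\divclosuredelta[f_i]_X(f_i - a_i))$. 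By assertion~\ref{div-g-d-delta-g-d} of Lemma~\ref{divisor-at-infinity}, both $d_{f_i}\divclosuredelta[f_i]_X(f_i - a_i)$ and $d_{f_i}\divclosuredelta[f_i]_X(f_i)$ are linearly equivalent to $e_{f_i}\divinfinitydelta{f_i}$ (the corresponding principal divisors being trivial), so $d_{f_i}E_i^a$ is linearly equivalent to $d_{f_i}E_i$; hence $(E_1^a, E_2^a) = (E_1, E_2) = \frac{e_{f_1}e_{f_2}}{4 d_{f_1}d_{f_2}}\deg \bar X$, the right-hand side of~\eqref{quasi-bezout}, by the computation in the proof of Theorem~\ref{quasi-affine-bezout-prelim}. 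Since Theorem~\ref{quasi-affine-bezout-prelim} moreover gives $|\finv(a)| \leq (E_1^a, E_2^a)$, it suffices to prove $(E_1^a, E_2^a) = |\finv(a)|$ for $a$ in a dense open subset of $\affine{2}{\kk}$.

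Next I would pass to the joint completion $W$, namely the closure of the diagonal embedding of $X$ in $\xdelta \times \xdeltaa{f_1} \times \xdeltaa{f_2}$, together with the projections $\rho_0\colon W \to \xdelta$ and $\rho\colon W \to X^\eta$; both are proper and restrict to the identity on $X$, and $\rho$ is proper birational, so $(E_1^a, E_2^a) = (\rho^*E_1^a, \rho^*E_2^a)$ by \cite[Example~2.4.3]{fultersection}. The crucial input is Proposition~\ref{pullback-cartier-function-closure}: since $\delta = \max\{1 \cdot \delta_j : 1 \leq j \leq N\}$ and, taking $e_{f_i} := \prod_j \delta_j(f_i)$, $\delta_{f_i} = \max\{(e_{f_i}/\delta_j(f_i))\delta_j : 1 \leq j \leq N\}$, both with positive integer coefficients, the proposition applies to $W$ (with $g := f_i - a_i$, so that $\xdeltaa{f_i} = X^{\delta_g}$ plays the role of the distinguished factor) and gives $\rho^*E_i^a = \divclosure[W]_X(f_i - a_i)$, the closure in $W$ of the principal divisor $\Div_X(f_i - a_i)$. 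In particular $\rho^*E_i^a$ has \emph{no} component in $W \setminus X$, so its support is $\overline{H_i(a)}^W$, the closure in $W$ of $H_i(a) := \{x \in X : f_i(x) = a_i\}$. This is the step I expect to be the main obstacle, and it is precisely where $\dim X = 2$ is used — without Proposition~\ref{pullback-cartier-function-closure} the pulled-back divisors could acquire boundary components contributing spurious intersection mass.

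It remains to locate these supports and invoke the hypothesis. Since the diagonal of $X$ in $X \times \xdeltaa{f_1} \times \xdeltaa{f_2}$ is the graph of a morphism into a separated variety, it is closed there, so by the topological observation recalled in the proof of Theorem~\ref{filtrexistence-thm2} we get $\rho_0^{-1}(X) = X$; hence $\rho_0^{-1}(H_i(a)) = H_i(a)$, and since $\rho_0^{-1}(\overline{H_i(a)}^{X^\delta})$ is closed, contains $H_i(a)$, and is carried onto $\overline{H_i(a)}^{X^\delta}$ by the proper map $\rho_0$, it coincides with $\overline{H_i(a)}^W$. Therefore $\supp(\rho^*E_1^a) \cap \supp(\rho^*E_2^a) = \rho_0^{-1}\big(\overline{H_1(a)}^{X^\delta} \cap \overline{H_2(a)}^{X^\delta}\big)$. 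Now choose $a$ in the dense open subset of $\affine{2}{\kk}$ over which $\xdelta$ preserves $\{f_1, f_2\}$ at $\infty$ (which is the content of the hypothesis) and $\finv(a)$ is finite (automatic for generic $a$, as $f$ is generically finite): then $\overline{H_1(a)}^{X^\delta} \cap \overline{H_2(a)}^{X^\delta}$ avoids $\xdelta \setminus X$, hence lies in $X$, so the intersection of the supports lies in $\rho_0^{-1}(X) = X$ and equals $H_1(a) \cap H_2(a) = \finv(a)$, a finite set. Thus $(\rho^*E_1^a, \rho^*E_2^a)$ is the sum over $\finv(a)$ of the local intersection multiplicities of $\rho^*E_1^a$ and $\rho^*E_2^a$; and since $X$ is open in $W$ and these divisors restrict on $X$ to $\Div_X(f_i - a_i)$, each local term is the intersection multiplicity at that point of the Cartier divisors determined by the $f_i - a_i$, i.e.\ the multiplicity of $\finv(a)$ there in the sense of \cite[Example~12.4.8]{fultersection}. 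Summing, $(\rho^*E_1^a, \rho^*E_2^a) = |\finv(a)|$, which together with the first paragraph gives~\eqref{quasi-bezout} with equality for all such $a$.
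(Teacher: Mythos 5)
Your argument is correct and follows essentially the same route as the paper's proof: pass to the joint completion $W = X^{\eta'}$ inside $\xdelta \times \xdeltaa{f_1} \times \xdeltaa{f_2}$, invoke Proposition~\ref{pullback-cartier-function-closure} (the genuine $\dim X = 2$ input) to identify $\rho^*E_i^a$ with $\divclosure[W]_X(f_i - a_i)$, and then use the preservation hypothesis to localize the intersection number at $\finv(a)$; the paper simply normalizes to $a = 0$ rather than invoking linear equivalence, but this is cosmetic. One small slip: $\rho_0^{-1}(\overline{H_i(a)}^{X^\delta}) = \overline{H_i(a)}^W$ need not hold (only $\supseteq$ is guaranteed, since fibers of $\rho_0$ over boundary points may be positive-dimensional), but you only ever use the inclusion $\overline{H_i(a)}^W \subseteq \rho_0^{-1}(\overline{H_i(a)}^{X^\delta})$, so the conclusion stands.
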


\begin{proof}
Note that for each $a \in X$, replacing $f$ by $f - a$ does not affect the assumptions on $f$. Therefore it suffices to show that if $\xdelta$ preserves $\{f_1, f_2\}$ at $\infty$ over $0$ then \eqref{quasi-bezout} holds with an equality for $a = 0$.

Let $X^{\eta'}$ (resp. $X^\eta$) be the closure of the diagonal embedding of $X$ into $X^\delta \times X^{\delta_{f_1}} \times X^{\delta_{f_2}}$ (resp. $X^{\delta_{f_1}} \times X^{\delta_{f_2}}$). Then there is a system of maps as follows:
$$\xymatrix @R-1pc@C-1pc{
								& 													& X^{\eta'}	\ar[1,-1]_{\pi'} \ar[2,2]^{\pi'_0}	& &	\\	
								& X^\eta \ar[1,-1]_{\pi_1} \ar[1,1]^{\pi_2}	&																& &	\\
X^{\delta_{f_1}}&											& X^{\delta_{f_2}}							& & X^\delta
}$$
such that each map is the identity on $X$. Fix an $i$, $1 \leq i \leq 2$. Let $D_i := \pi_i^*(\divinfinitydelta{f_i})$ and $D'_i := {\pi'}^*(D_i) = {\pi'_i}^*(\divinfinitydelta{f_i})$, where $\pi'_i := \pi_i \circ \pi'$. Due to lemma \ref{divisor-at-infinity}, on $X^{\eta'}$, $[\Div_{X^{\eta'}}(f_i^{d_{f_i}})] = d_{f_i}[{\pi'_i}^*(\divclosuredelta[f_i]_X(f_i))] - e_{f_i}[D'_i]$. Since ${\pi'_i}^*(\divclosuredelta[f_i]_X(f_i)) = \divclosure[\eta']_X(f_i)$ according to proposition \ref{pullback-cartier-function-closure}, it follows that $[\Div_{X^{\eta'}}(f_i^{d_{f_i}})] = d_{f_i}[\divclosure[\eta']_X(f_i)] - e_{f_i}[D'_i]$ and therefore 
$$(D'_1, D'_2) = \frac{d_{f_1}d_{f_2}}{e_{f_1}e_{f_2}} (\divclosure[\eta']_X(f_1), \divclosure[\eta']_X(f_2))~.$$
By assumption $\xdelta$ preserves $\{f_1, f_2\}$ at $\infty$. It follows that $X^{\eta'}$ also preserves $\{f_1, f_2\}$ at $\infty$. Therefore the points at the intersection of supports of $\divclosure[\eta']_X(f_1)$ and $\divclosure[\eta']_X(f_2)$ are precisely the points of $\finv(0)$ a intersections of $V(f_1)$ and $V(f_2)$. This implies that 
\begin{align}
|\finv(0)| = (\divclosure[\eta']_X(f_1), \divclosure[\eta']_X(f_2)) = \frac{e_{f_1}e_{f_2}}{d_{f_1}d_{f_2}}(D'_1, D'_2)~. \label{solutions-equal-intersection}
\end{align}
Moreover intersection numbers are preserved under the pull backs by proper birational morphisms \cite[Example 2.4.3]{fultersection} and therefore $(D'_1, D'_2) = ({\pi'}^*(D_1), {\pi'}^*(D_2)) = (D_1, D_2)$. Recall that $(D_1, D_2) = \deg (s(X^\eta))$ according to \eqref{intersection-equals-degree}, where $s$ is the Segre embedding of $X^\eta$ into the product of ambient spaces of $\xdeltaa{f_1}$ and  $\xdeltaa{f_2}$. Combining the latter equality with \eqref{solutions-equal-intersection}, we obtain the desired equality.
\end{proof}

\begin{lemma} \label{map-existence}
Let $\eta_1, \ldots, \eta_k$ be complete degree like functions on $A$. Then there is a complete degree like function $\eta$ and proper maps $\phi_i: X^\eta \to X^{\eta_i}$ for $i = 1, \ldots, k$ such that the following diagram commutes for each $i$:
$$\xymatrix{
X^\eta \ar[r]^{\phi_i} 	& X^{\eta_i} \\
X \ar@{^{(}->}[u]|{\psi_\eta} \ar[r]^{\mathds{1}} & X \ar@{^{(}->}[u]|{\psi_{\eta_i}}
}\ .$$
\end{lemma}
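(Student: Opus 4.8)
The plan is to realize $X^\eta$ as the closure of the diagonal embedding of $X$ into a product of the $X^{\eta_i}$'s, re-embedded into a single projective space by the Segre map, with each $\phi_i$ the restriction of the $i$-th projection. First I would use completeness of each $\eta_i$: writing $\scrF^{(i)} = \{F^{(i)}_d : d \geq 0\}$ for the filtration attached to $\eta_i$, example \ref{proj-example4} gives $d_i > 0$ such that $(A^{\eta_i})^{[d_i]} = \dsum_{k \geq 0} F^{(i)}_{kd_i}$ is generated as a $\kk$-algebra by $F^{(i)}_{d_i}$. Fix a $\kk$-basis $1, g_{i,1}, \ldots, g_{i,l_i}$ of $F^{(i)}_{d_i}$ (note $1 \in F^{(i)}_{d_i}$ since $d_i > 0$). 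By corollary \ref{completecor}, $X^{\eta_i}$ is isomorphic, via an isomorphism restricting to the identity on $X$, to the closure in $\pp^{l_i}(\kk)$ of the image of the morphism $\phi^{(i)} : X \to \pp^{l_i}(\kk)$, $x \mapsto [1 : g_{i,1}(x) : \cdots : g_{i,l_i}(x)]$. Henceforth I identify $X^{\eta_i}$ with this closed subvariety of $\pp^{l_i}(\kk)$.

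Next, put $L := \prod_{i=1}^k (l_i + 1) - 1$ and let $s : \pp^{l_1}(\kk) \times \cdots \times \pp^{l_k}(\kk) \into \pp^L(\kk)$ be the Segre embedding. Composing the diagonal morphism $(\phi^{(1)}, \ldots, \phi^{(k)}) : X \to \prod_{i} \pp^{l_i}(\kk)$ with $s$, and observing that the homogeneous coordinate of $s$ indexed by the tuple of leading coordinates of all factors equals the product $1 \cdot 1 \cdots 1 = 1$ on the image, we get a morphism $X \to \pp^L(\kk)$ of the form $x \mapsto [1 : h_1(x) : \cdots : h_L(x)]$ for some $h_1, \ldots, h_L \in \kk[X]$. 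Applying corollary \ref{completecor} once more yields a complete filtration $\scrF$ on $\kk[X]$ --- equivalently, a complete degree-like function $\eta$ on $A$ --- together with an isomorphism, identical on $X$, between $X^\eta$ and the closure $\xbar$ in $\pp^L(\kk)$ of the image of this composite map.

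It remains to produce the $\phi_i$'s. Since $s$ is a closed immersion, $\xbar$ is contained in the Segre variety $s(\prod_{j} \pp^{l_j}(\kk))$, so precomposing the $i$-th projection with $s^{-1}$ restricts to a morphism $\xbar \to \pp^{l_i}(\kk)$. This morphism carries the dense subset $X \subseteq \xbar$ onto the image of $\phi^{(i)}$, which lies in the closed subset $X^{\eta_i} \subseteq \pp^{l_i}(\kk)$; hence all of $\xbar$ maps into $X^{\eta_i}$, and we take $\phi_i : X^\eta \cong \xbar \to X^{\eta_i}$ to be the resulting morphism. By construction $\phi_i$ restricts to the identity on $X$, which is exactly the commutativity of the square in the statement. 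Properness of $\phi_i$ is then automatic: $X^\eta$ is projective, hence proper over $\kk$, while $X^{\eta_i}$ is separated over $\kk$, so the factorization $X^\eta \to X^{\eta_i} \to \spec \kk$ of a proper morphism through a separated one forces $\phi_i$ to be proper.

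The only point needing care is the bookkeeping that guarantees the Segre-composed map genuinely has a coordinate identically equal to $1$, so that corollary \ref{completecor} applies and the $\eta$ it produces is at once complete and a bona fide degree-like function (i.e. $\eta \equiv 0$ on $\kk \setminus \{0\}$); everything else is formal. This is the same device already used in the final paragraph of the proof of theorem \ref{filtrexistence-thm2}.
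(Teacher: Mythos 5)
Your proof is correct and is exactly the paper's construction — take the closure of the diagonal embedding of $X$ in $X^{\eta_1}\times\cdots\times X^{\eta_k}$ — with the details the paper dismisses as ``Clear'' (Segre re-embedding, application of corollary \ref{completecor} to produce the complete degree-like function $\eta$, and the properness check) filled in carefully. Nothing to add.
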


\begin{proof}
Clear: let $X^\eta$ be the closure of the diagonal embedding of $X$ into $X^{\eta_1} \times \cdots \times X^{\eta_k}$.
\end{proof}

Assume for all $\lambda := (\lambda_1, \ldots, \lambda_N)$ with $\lambda_j \geq 1$, $\delta^{(\lambda)} := \max\{\lambda_j\delta_j: 1 \leq j \leq N\}$ is finitely generated. Let $\{\lambda^i: i \geq 1\}$ be an enumeration of $\nn^2$. For each $i$, let $g_i := (f_1)^{\lambda^i_1d_{f_1}} (f_2)^{\lambda^i_2d_{f_2}}$. As above, define quasidegree $\delta_{g_i}$ on $A$, choose a suitable $d_{g_i} \in \nn$ and let $D_{\delta_{g_i}, d_{g_i}}$ be the divisor at $\infty$ on $X^{\delta_{g_i}}$. By lemma \ref{map-existence}, there exist completions $X_1, X_2, \ldots $ of $X$ and a system of maps as follows:
$$\xymatrix @R-1pc@C-1pc{
							& X^{\delta_{g_i}}  & X^{\delta_{g_{i-1}}}	& 							& X^{\delta_{g_2}} 	& X^{\delta_{g_1}}									& \\
\cdots \ar[r] & X_i \ar[u] \ar[r] & X_{i-1} \ar[u] \ar[r] & \cdots \ar[r]	& X_2 \ar[u] \ar[r] & X_1 \ar[u]\ar[1,-1]\ar[1,1]\ar[r] & X^\delta \\
							&										&												&								&	X^{\delta_{f_1}}	&	\cdots														& X^{\delta_{f_n}}
}$$

Fix $K \geq 1$. For each $j \leq K$ and each $k \geq K$, we can pull back on $X_k$ the Cartier divisor $D_{\delta_{g_j}, d_{g_j}}$ and we can also pull back each $D_{\delta_{f_i}, d_{f_i}}$ for $i= 1, 2$. If $D_1, D_2$ are Cartier divisors on $X_k$ which are linear combinations of those in the preceeding sentence, the intersection product $(D_1, D_2)$ is independent of $k$ \cite[Example 2.4.3]{fultonsection}. In particular, $(D_{\delta_{g_j}, d_{g_j}})^2$ and $(D_{\delta_{f_{i_1}}, d_{f_{i_1}}}, D_{\delta_{f_{i_2}}, d_{f_{i_2}}})$ are well defined for each $j \leq 1$ and $1 \leq i_1, i_2 \leq n$. Now fix any $j$. Let $V_{g_j}$ (resp. $V_{f_i}$ for $1 \leq i \leq 2$) be the (principal) Cartier divisor on $X$ generated by $g_j$ (rep. $f_i$). Then $V_{g_j} = \sum_{i=1}^2 \lambda^j_id_{f_i}V_{f_i}$. Applied to $X_j$, lemma \ref{divisor-at-infinity} implies that $((f_i)^{d_{f_i}}) = d_{f_i}\bar V_{f_i} - D_{\delta_{f_i}, d_{f_i}}$ for each $i$, so that $(g_j) = \sum_{i=1}^2 \lambda^j_id_{f_i} (f_i) = \sum_{i=1}^2 \lambda^j_i(d_{f_i}\bar V_{f_i} - D_{\delta_{f_i}, d_{f_i}}) = \bar V_{g_j} - \sum_{i=1}^2 \lambda^j_i D_{\delta_{f_i}, d_{f_i}}$. By lemma \ref{divisor-at-infinity} again, it follows that $D_{\delta_{g_j}, d_{g_j}} = d_{g_j}\sum_{i=1}^2 \lambda^j_i D_{\delta_{f_i}, d_{f_i}}$. Therefore, the function $\scrM:\nn^2 \to \nn$ defined by:
\begin{align*}
\scrM(\lambda^j) := \frac{(D_{\delta_{g_j}, d_{g_j}})^2}{(d_{g_j})^2}
\end{align*}
depends polynomially on its arguments, since 
\begin{align} \label{M-expression}
\scrM(\lambda_1, \lambda_2) &
	= (\sum_{i=1}^2 \lambda_i D_{\delta_{f_i}, d_{f_i}})^2 
	=\lambda_1^2 D^2_{\delta_{f_1}, d_{f_1}} + 2 \lambda_1\lambda_2(D_{\delta_{f_1}, d_{f_1}}, D_{\delta_{f_2}, d_{f_2}}) + \lambda_2^2D_{\delta_{f_2}, d_{f_2}}^2.
\end{align}

We say that $\nu$ {\em separates} $\delta_1, \ldots, \delta_N$ if for all $\lambda_1, \ldots, \lambda_N \in \nn$, $\bigcap_{i=1}^N \nu(F^{\lambda_i\delta_i}_k) = \nu(\bigcap_{i=1}^N F^{\lambda_i\delta_i}_k)$ for all sufficiently large $k$. 

\begin{example}
Let $\prec$ be any total ordering on $\zz^n$ such that it is compatible with addition. Define $\nu: \cc[x_1, \ldots, x_n] \to \zz^n$ by: $\nu(\sum_\alpha a_\alpha x^\alpha) := \min_{\prec}\{\alpha: a_\alpha \neq 0\}$. Then $\nu$ separates all the weighted degrees in $x_1, \ldots, x_n$ coordinates.
\end{example}


\begin{example}
Let $w_1 := x - y^2$ and $w_2 := x+ y^2$. Then $\cc[x,y] = \cc[w_1,y] = \cc[w_2,y]$. Let $\delta_i$ be the degree in $(w_i,y)$ coordinates, $1 \leq i \leq 2$, and $\nu$ be any valuatoin on $\cc[x,y]$ induced by an ordering of monomials in $(x,y)$. Then $\prec$ does {\em not} separate $\delta_1$ and $\delta_2$. Indeed, $F^{\delta_1}_1 = \cc\langle 1, y, x - y^2 \rangle$ and $F^{\delta_2}_1 = \cc\langle 1, y, x + y^2 \rangle$. Consequently, $F^{\delta_1}_1 \cap F^{\delta_2}_1 = \cc\langle 1, y \rangle$ and $\nu\left(F^{\delta_1}_1 \cap F^{\delta_2}_1 \right) = \{(0,0), (0,1)\}$. On the other hand, $\nu\left(F^{\delta_1}_1\right) \cap \nu\left(F^{\delta_2}_1\right) = \{(0,0), (0,1), (1,0)\}$.
\end{example}

Now let $X$, $\delta$, $f$ be as in Theorem \ref{quasi-affine-bezout-prelim} and let $\nu$ be a valuation on $A := \kk[X]$ that {\em separates} $\delta_1, \ldots, \delta_N$. Let $C_j$ be the smallest closed cone in $\rr^3$ containing
$$G_j := \{(\frac{1}{d_{g_j}}\delta_{g_j}(h),\nu(h)) \in \zz_+^3: h \in A\}.$$

For each $\lambda \in \nn^n$, let $\clambda$ be the smallest cone in $\rr^3$ containing
$$\glambda := \{(\frac{1}{d_{g_j}}\delta_{g_j}(h),\nu(h)) \in \zz_+^3: h \in A\}.$$

By lemma \ref{divisor-at-infinity} and proposition \ref{propvanskii}, $(D_{\delta_{g_j}, d_{g_j}})^2 = 2\vol(\Delta_j)$, where $\Delta_j$ is the convex hull of the cross-section of $C_j$ at the first coordinate value $1$. Since $\nu$ separates $\delta_1, \ldots, \delta_N$, it follows that $C_j = C_{j,1} \cap \cdots \cap C_{j,N}$, where for each $k$, $C_{j,k}$ be the smallest closed cone in $\rr^3$ containing
$$G_{j,k} := \{(\frac{1}{d_{g_j}\delta_k(g_j)}\delta_k(h),\nu(h)) \in \zz_+^3: h \in A\}.$$
It follows that
$$\scrM(\lambda_1, \lambda_2) = \vol\left(\bigcap_{j=1}^N \left(\lambda_1 d_{1j} + \lambda_2 d_{2j}\right) \Delta^{(j)}\right) $$
For appropriate $d_{ij}$ and $\Delta^{(j)}$'s.\\

Now, \cite[Therorem 6.4]{lazarsfeld-mustata} implies that each $\Delta^{(j)}$ has {\em linear} edges. Since $\scrM$ is a homogeous polynomial of degree $2$ in $\lambda_i$'s, this forces that the sums and intersection commute in the preceding expression for $\scrM$, i.e.\ 
$$\scrM(\lambda_1, \lambda_2) = \vol\left(\lambda_1 \bigcap_{j=1}^Nd_{1j}\Delta^{(j)} +\lambda_2 \bigcap_{j=1}^N d_{2j} \Delta^{(j)}\right).$$
It follows from comparing the preceding expression with \eqref{M-expression}, that $(D_{\delta_{f_1}, d_{f_1}}, D_{\delta_{f_2}, d_{f_2}})$ is precisely the mixed volume of $\bigcap_{j=1}^Nd_{1j}\Delta^{(j)}$ and $ \bigcap_{j=2}^Nd_{2j}\Delta^{(j)}$, as required.

\begin{rem*}
The theorem has to be stated explicitly!
\end{rem*}

\bibliographystyle{plain}
\bibliography{bibi}


\end{document}